\newif\ifLabelPrint 
\newtheorem{thm}{Theorem}[section]
\newtheorem{prop}[thm]{Proposition}
\newtheorem{cor}[thm]{Corollary}
\newtheorem{lem}[thm]{Lemma}
\theoremstyle{definition}
\newtheorem{defi}[thm]{Definition}
\newtheorem{rmk}[thm]{Remark}
\newtheorem{nota}[thm]{Notation}
\newtheorem*{acknowledgement}{Acknowledgement}
\let\op\operatorname			
\newcommand*{\Z}{\mathbb{Z}}	
\newcommand*{\C}{\mathbb{C}}	
\renewcommand*{\P}{\mathbb{P}}	
\newcommand*{\D}{\op{D}^{\mathsf{b}}}	
\newcommand*{\Hom}{\op{Hom}}	
\newcommand*{\Ext}{\op{Ext}}	
\newcommand*{\varHom}{\mathit{\mathcal H\hskip-1pt{}om}}	
\newcommand*{\Pic}{\op{Pic}}	
\newcommand*{\Dotimes}{ \mathbin{\stackrel{L}{\otimes}}}	
\newcommand*{\rank}{\op{rank}} 
\makeatletter\@addtoreset{equation}{section}\makeatother
\setlist[enumerate,1]{leftmargin=25pt,label={\normalfont(\arabic{enumi})},topsep=0pt}
\setlist[itemize,1]{leftmargin=25pt,topsep=0pt}
\begin{document}

\title[Ulrich bundles on intersections of two 4-dimensional quadrics]{Ulrich bundles on \\ intersections of two 4-dimensional quadrics}

\author{Yonghwa Cho}
\address{KAIST, Daehak-ro 291, Yuseong-gu, Daejeon 34141, Republic of Korea}
\email{yonghwa.cho@kaist.ac.kr}

\author{Yeongrak Kim}
\address{Max-Planck Institute for Mathematics, Vivatsgasse 7, Bonn 53111, Germany}
\email{yeongrakkim@mpim-bonn.mpg.de}

\author{Kyoung-Seog Lee}
\address{Center for Geometry and Physics, Institute for Basic Science (IBS), Pohang 37673, Republic of Korea}
\email{kyoungseog02@gmail.com}

\maketitle

%
%
%

\begin{abstract}
	In this paper, we investigate the existence of Ulrich bundles on a smooth complete intersection of two $4$-dimensional quadrics in $\P^5$ by two completely different methods. First, we find good ACM curves and use Serre correspondence in order to construct Ulrich bundles, which is analogous to the construction on a cubic threefold by Casanellas-Hartshorne-Geiss-Schreyer. Next, we use Bondal-Orlov's semiorthogonal decomposition of the derived category of coherent sheaves to analyze Ulrich bundles. Using these methods, we prove that any smooth intersection of two 4-dimensional quadrics in $\P^5$ carries an Ulrich bundle of rank $r$ for every $r \ge 2$. Moreover, we provide a description of the moduli space of stable Ulrich bundles.
\end{abstract}

\section{Introduction}
Let $\P^n$ be the $n$-dimensional projective space over the field of complex numbers $\C$. A famous theorem by Horrocks states that a vector bundle $\mathcal E$ on $\P^n$ splits as the direct sum of line bundles if and only if $\mathcal E$ has no intermediate cohomology, \emph{i.e.}, $h^i(\mathcal E(j))= 0$ for all $0<i<n$ and $j \in \Z$. It is natural to ask the algebro-geometric meaning of these vanishing conditions for the other varieties. Let  $X \subset \P^N$ be an $n$-dimensional smooth projective variety with a fixed polarization $\mathcal O_X(1) = \mathcal O_{\P^N}(1)|_{X}$. We call that a vector bundle $\mathcal E$ on $X$ is \emph{ACM (arithmetically Cohen-Macaulay)} if $\mathcal E$ has no intermediate cohomology with respect to the given polarization $\mathcal O_X (1)$. Roughly speaking, the presence of nontrivial ACM bundles measures how $X$ is apart from the projective space $\P^n$. Due to their interesting properties, ACM bundles have played a significant role in the study of vector bundles. 

In commutative algebra, ACM bundles correspond to \emph{MCM (maximal Cohen-Macaulay)} modules which are Cohen-Macaulay modules achieving the maximal dimension. A particularly interesting case happens when the minimal free resolution of an MCM module becomes completely linear. Such an MCM module has the maximal possible number of minimal generators which are concentrated on a single degree \cite{Ulrich1984}. Eisenbud and Schreyer made a comprehensive study on the geometric analogue of these linear MCM modules, and named them Ulrich sheaves \cite{EisenbudSchreyer2003}. Thanks to foundational works by Beauville \cite{Beauville2000} and Eisenbud-Schreyer \cite{EisenbudSchreyer2003}, Ulrich sheaves provide a number of fruitful applications; for example, linear determinantal representations of hypersurfaces, matrix factorizations by linear matrices, the cone of cohomology tables, and Cayley-Chow forms. Eisenbud and Schreyer conjectured that every projective variety carries an Ulrich sheaf \cite{EisenbudSchreyer2003}, and verified it for a few simple cases. The conjecture is still wildly open even for smooth surfaces. In very recent years, there were several progresses on the conjecture for surfaces; for instance, general K3 surfaces \cite{AproduFarkasOrtega}, abelian surfaces \cite{Beauville2015:Abelian}, and nonspecial surfaces of $p_g = q = 0$ \cite{Casnati}. 

Much less is known for ACM and Ulrich bundles on threefolds. On a smooth quadric $Q^3 \subset \P^4$, there is only one nontrivial indecomposable ACM bundle, namely, the spinor bundle \cite{BuchweitzEisenbudHerzog}. Arrondo and Costa studied ACM bundles on Fano 3-folds of index 2 of degree $d=3, 4, 5$ \cite{ArrondoCosta2000}. Madonna studied splitting criteria for rank 2 vector bundles on hypersurfaces in $\P^4$ \cite{Madonna1998:Splitting}. He also classified all the possible Chern classes of rank 2 ACM bundles on prime Fano 3-folds and complete intersection Calabi-Yau 3-folds \cite{Madonna2002:ACMprimeFano}. Their results expected the existence of Ulrich bundles on 3-folds of small degree, however, constructions were not complete except for a very few cases. On the other hand, Beauville showed that a general hypersurface of degree $\le 5$ in $\P^4$ is linearly Pfaffian. In other words, such a hypersurface carries a rank 2 Ulrich bundle \cite{Beauville2000}. He also checked that every Fano 3-fold of index 2 carries a rank 2 Ulrich bundle \cite{Beauville2016:IntroductionUlrich}. In particular, a general smooth cubic 3-fold carries Ulrich bundles of rank $r$ for every $r \ge 2$, proved first by Casanellas, Hartshorne, Geiss, and Schreyer \cite{CasanellasHartshorne:Ulrich}. Recently, Lahoz, Macr{\`i}, and Stellari extended this result to every smooth cubic 3-fold using the derived category of coherent sheaves and also described the moduli space of stable Ulrich bundles \cite{LahozMacriStellari}. 

It is quite natural to ask for the next case, a del Pezzo threefold $X=Q_0^4 \cap Q_{\infty}^4$ of degree four which is the complete intersection of two quadric 4-folds. Indeed, $X$ is very attractive since there are several ways to understand vector bundles on $X$. Since $X$ is a 3-fold, we may construct vector bundles on $X$ by observing curves lying on $X$ via Serre correspondence. On the other hand, it is also well-known that the geometry of the intersection of $2$ even dimensional quadrics is closely related to a hyperelliptic curve. Bondal and Orlov showed that the derived category of coherent sheaves on the intersection of $2$ even dimensional quadrics has a semiorthogonal decomposition whose components are the derived category of the hyperelliptic curve associated to the $2$ given quadrics and the exceptional collection \cite{BondalOrlov:SODforAlgVar}. Recently, there were several attempts to understand vector bundles on a variety using the semiorthogonal decomposition of its derived category. For instance, Kuznetsov studied instanton bundles on some index 2 Fano 3-folds via  semiorthogonal decompositions \cite{Kuznetsov:Instanton}. Lahoz, Macr{\`i}, and Stellari studied ACM bundles on cubic 3-folds and 4-folds via semiorthogonal decomposition \cite{LahozMacriStellari, LahozMacristellari:4folds}. Therefore, it is reasonable to apply the semiorthogonal decomposition to understand vector bundles on the intersection of two even dimensional quadrics.

Being motivated by earlier works mentioned above, we investigate the existence and the moduli space of Ulrich bundles on the intersection of two 4-dimensional quadrics by two completely different methods: classical Serre corrseponence and Bondal-Orlov theorem. The main result is the following theorem:

	\begin{thm}[see Theorem~\ref{thm:StableUlrichBundlesClassical} and \ref{thm: Main thm}]
		The moduli space of stable Ulrich bundles of rank $r\geq 2$ on $X = Q_0^4 \cap Q_\infty^4$ is isomorphic to a nonempty open subscheme of $\mathcal U_C^{\sf s}(r,2r)$, where $\mathcal U_C^{\sf s}(r,2r)$ is the moduli space of stable vector bundles of rank $r$ and degree $2r$ on a curve $C$ of genus $2$.
	\end{thm}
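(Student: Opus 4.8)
The plan is to route everything through the Bondal--Orlov semiorthogonal decomposition and to reduce Ulrich bundles on the threefold $X$ to bundles on the genus $2$ curve $C$ determined by the pencil $\langle Q_0^4, Q_\infty^4\rangle$. Since $X$ is a del Pezzo threefold of index $2$, the decomposition has the shape (after twisting by $\mathcal{O}_X(1)$)
$$
\D(X) = \langle \Phi(\D(C)),\ \mathcal{O}_X(1),\ \mathcal{O}_X(2)\rangle,
$$
where $\Phi\colon \D(C)\to\D(X)$ is the fully faithful Fourier--Mukai embedding. The first observation is a cohomological reformulation of the Ulrich property. A vector bundle $\mathcal{E}$ on $X$ (which has degree $4$ and dimension $3$) is Ulrich precisely when $\Hom^\bullet_X(\mathcal{O}_X(p),\mathcal{E}) = H^\bullet(X,\mathcal{E}(-p)) = 0$ for $p=1,2,3$. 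The vanishings for $p=1,2$ say exactly that $\mathcal{E}$ lies in the left orthogonal ${}^\perp\langle\mathcal{O}_X(1),\mathcal{O}_X(2)\rangle = \Phi(\D(C))$, so every Ulrich bundle is of the form $\mathcal{E}=\Phi(F)$ for a unique $F=\Psi(\mathcal{E})\in\D(C)$, where $\Psi$ is the adjoint projection. The remaining vanishing $H^\bullet(X,\mathcal{E}(-3)) = 0$ is an extra constraint not implied by the decomposition; this is the source of the word ``open'' in the statement.

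Next I would pin down $F$. As $C$ is a smooth curve, $\D(C)$ has homological dimension $1$, so $F\cong\bigoplus_i\mathcal{H}^i(F)[-i]$; a Mukai-vector computation with the kernel of $\Phi$, constrained by the rigid Hilbert polynomial $\chi(\mathcal{E}(t))=4r\binom{t+3}{3}$ forced by the Ulrich property, shows that $F$ is concentrated in a single degree and is there a vector bundle of rank $r$ and degree $2r$. Thus $\Psi$ sends rank-$r$ Ulrich bundles to $\mathcal{U}_C(r,2r)$. Conversely, starting from $F\in\mathcal{U}_C^{\mathsf s}(r,2r)$, I would show that the three conditions ``$\Phi(F)$ is a sheaf'' (a weak-index-type vanishing $\mathcal{H}^i(\Phi(F))=0$ for $i\neq 0$), ``$\Phi(F)$ is locally free'', and ``$H^\bullet(X,\Phi(F)(-3))=0$'' are all open conditions on $F$, and that where they hold $\Phi(F)$ is an Ulrich bundle. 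This produces a bijection between closed points of the two loci.

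To promote the bijection to an isomorphism of schemes I would first match stability. Because all Ulrich bundles share the same reduced Hilbert polynomial, the Jordan--H\"older factors of an Ulrich bundle are again Ulrich, hence of the form $\Phi(\text{stable bundle on }C)$; full faithfulness of $\Phi$ then turns a destabilizing Ulrich subsheaf of $\mathcal{E}$ into a destabilizing subbundle of $F$ and vice versa, so $\mathcal{E}$ is stable if and only if $F$ is. The scheme structure is transported by the relative Fourier--Mukai functor together with base change: since $\Phi$ is an equivalence onto its image it identifies the $\Ext^1$ and $\Ext^2$ groups controlling deformations and obstructions on the two sides, so the induced morphism of moduli functors is an isomorphism onto the open subscheme of $\mathcal{U}_C^{\mathsf s}(r,2r)$ carved out by the three open conditions above. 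Nonemptiness of this locus follows since openness reduces it to exhibiting one stable rank-$r$ Ulrich bundle, which one obtains by deforming direct sums of the rank-$2$ Ulrich bundles produced through the Serre-correspondence construction.

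The principal difficulty lies in the second step: proving that $\Phi(F)$ is a genuine locally free Ulrich bundle rather than a complex, and identifying precisely the open locus of $\mathcal{U}_C^{\mathsf s}(r,2r)$ on which this occurs. Controlling the cohomology sheaves of the Fourier--Mukai transform (the weak-index vanishing), its reflexivity and local freeness, and the single ``extra'' cohomological vanishing $H^\bullet(X,\Phi(F)(-3))=0$ is the technical heart of the argument, and it is exactly these conditions that prevent the moduli space of Ulrich bundles from being all of $\mathcal{U}_C^{\mathsf s}(r,2r)$.
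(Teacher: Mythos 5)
Your overall route is the same as the paper's Section~\ref{section:derived}: use the Bondal--Orlov decomposition to show that the two vanishings $H^\bullet(\mathcal E(-1))=H^\bullet(\mathcal E(-2))=0$ place $\mathcal E^*(1)$ in $\Phi_{\mathcal S}(\D(C))$, identify the preimage $F$ as a semistable bundle of rank $r$ and degree $2r$ by a Grothendieck--Riemann--Roch computation, match stability via Jordan--H\"older factors, and transport the scheme structure through the $\Ext^1$ identification. However, there are two genuine gaps. First, and most concretely, your nonemptiness argument fails for odd $r$: direct sums of rank-$2$ Ulrich bundles only have even rank, so deforming them cannot produce a stable Ulrich bundle of rank $3$ (or any odd rank). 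The paper must supply a separate rank-$3$ input --- either the Macaulay2 construction of an ACM curve of degree $15$ and genus $12$ via Serre correspondence (Proposition~\ref{Thm:ExistenceOfACMg12d15}, valid only for general $X$), or the dimension count over Quot schemes showing that a generic stable bundle in $\mathcal U_C(3,6)$ is orthogonal to the second Raynaud bundle (Proposition~\ref{prop: Generic orthogonality r=3}, valid for every smooth $X$). Only with a rank-$3$ example in hand can direct sums of rank-$2$ and rank-$3$ pieces, plus semicontinuity, cover all $r\geq 4$ (Corollary~\ref{cor:Orthogonality}).

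Second, you correctly flag the ``extra'' vanishing $H^\bullet(\mathcal E(-3))=0$ as the technical heart but leave it unresolved. The paper's key step here is Lemma~\ref{lem: Projection image and Raynaud bundle}, which computes $\Phi_{\mathcal S}^!(\mathcal O_X(-2))[2]\simeq \mathcal R^*\otimes\omega_C^{\otimes 2}$ via Grothendieck--Verdier duality, turning the extra vanishing into the concrete condition $\Ext^p_C(\mathcal R, F^*\otimes\omega_C^{\otimes 2})=0$ for $p=0,1$. Without identifying this object as the Raynaud bundle, one cannot run the genericity arguments (the Quot-scheme estimates in Proposition~\ref{prop: Generic orthogonality r=3} use the stability of $\mathcal R$ and its specific rank and degree), so the ``open and nonempty'' claim for the image locus remains unproved. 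Likewise, the local-freeness of $\Phi_{\mathcal S}(F)$ is not a formality: the paper proves it using the Ottaviani-type sequence relating $\mathcal S_c^*$ and $\mathcal S_{\tau c}^*$ together with the $\mu$-semistability of $\mathcal E$, and conversely shows that the locus $\{h^1(F\otimes\mathcal S_x)=0 \text{ for all } x\}$ is nonempty by another dimension count (Proposition~\ref{prop: generic FM image is locally free}). These are the steps your proposal would need to fill in.
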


Our approach using Serre correspondence closely follows the works of Arrondo and Costa \cite{ArrondoCosta2000} and of Casanellas, Hartshorne, Geiss, and Schreyer \cite{CasanellasHartshorne:Ulrich}, and our approach using derived categories is strongly influenced by the works of Kuznetsov \cite{Kuznetsov:Instanton} and of Lahoz, Macr{\`i}, and Stellari \cite{LahozMacriStellari, LahozMacristellari:4folds}. The structure of this paper is as follows. In Section \ref{section:Preliminary}, we recall a few useful facts related to ACM and Ulrich bundles. In Section \ref{section:Serre}, we construct Ulrich bundles of any rank $r \ge 2$ on a general intersection of two quadric 4-folds $X=Q_0^4 \cap Q_{\infty}^4$ using Serre correspondence and \emph{Macaulay2}. In Section \ref{section:derived}, we prove the existence of Ulrich bundles of any rank $r \ge 2$ on a smooth complete intersection of two quadric 4-folds $X=Q_0^4 \cap Q_{\infty}^4$ using Bondal-Orlov theorem. We also analyze the moduli of stable Ulrich bundles of rank $r$ on $X$ and provide a description in terms of vector bundles on $C$.

\section{Preliminaries on ACM and Ulrich bundles}\label{section:Preliminary}%

In this section, we briefly review the definition of ACM and Ulrich bundles and their basic properties. 

\begin{defi}\label{defi:ACMUlrich}
Let $X \subset \P^N$ be an $n$-dimensional smooth projective variety embedded by a very ample line bundle $\mathcal O_X(1)$. 
\begin{enumerate}
\item A coherent sheaf $\mathcal E$ on $X$ is \emph{ACM} if $H^i (\mathcal E(j))=0$ for all $0<i<n$ and $j \in \Z$.
\item An ACM sheaf $\mathcal E$ on $X$ is \emph{Ulrich} if $H^0(\mathcal E(-1))=0$ and $h^0(\mathcal E)=\deg (X)  \rank(\mathcal E)$.
\end{enumerate}
\end{defi}

\begin{rmk}
Since the underlying space $X$ is smooth, $\mathcal E$ being $ACM$ implies that $\mathcal E$ is locally free. Hence it is natural to call ACM (Ulrich) bundles for the objects occurring in the above definition.
\end{rmk}

We recall the following proposition by Eisenbud and Schreyer. We refer to \cite{Beauville2016:IntroductionUlrich, EisenbudSchreyer2003} for more details.
\begin{prop}[{\cite[Theorem~1]{Beauville2016:IntroductionUlrich},\ \cite[Proposition 2.1]{EisenbudSchreyer2003}}]\label{prop: Ulrich Equiv conditions}
Let $X \subset \P^N$ and $\mathcal E$ as above. The following are equivalent:
\begin{enumerate}
\item $\mathcal E$ is Ulrich;
\item $H^i (\mathcal E(-i))=0$ for all $i>0$ and $H^j (\mathcal E(-j-1))=0$ for $j<n$.
\item $H^i (\mathcal E(-j))=0$ for all $i$ and $1 \le j \le n$. \label{item: Ulrich as Acyclic condition}
\item For some (all) finite linear projections $\pi : X \to \P^n$, the sheaf $\pi_{*} \mathcal E$ is isomorphic to the trivial sheaf $\mathcal O_{\P^n}^{\oplus t}$ for some $t$.
\item The section module $M:=\oplus_j H^0 (\mathcal E(j))$ is a linear MCM module, that is,
the minimal $S=\C[x_0, \ldots, x_N]$-free resolution of $M$ 
\[
\mathbf{F} : 0 \to F_{N-n} \to \cdots \to F_1 \to F_0 \to M \to 0
\]
is linear in the sense that $F_i$ is generated in degree $i$ for every $i$.
\end{enumerate}
\end{prop}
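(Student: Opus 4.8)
The plan is to translate all five conditions into a single statement about a vector bundle on $\P^n$ by means of a finite linear projection, and then to resolve that statement using Horrocks' splitting criterion and Beilinson's resolution of the diagonal. First I would fix a general linear center $L \cong \P^{N-n-1}$ disjoint from $X$ and let $\pi \colon X \to \P^n$ be the induced projection. As $X$ is smooth of dimension $n$ and $\pi$ is finite and surjective onto the smooth variety $\P^n$, the map $\pi$ is flat; hence $\pi_*$ is exact and $\mathcal F := \pi_*\mathcal E$ is a vector bundle on $\P^n$ of rank $\deg(X)\rank(\mathcal E)$. Since $\mathcal O_X(1) = \pi^*\mathcal O_{\P^n}(1)$, the projection formula yields $\pi_*(\mathcal E(j)) \cong \mathcal F(j)$, and finiteness of $\pi$ gives $R^{>0}\pi_* = 0$, so that $H^i(X,\mathcal E(j)) \cong H^i(\P^n,\mathcal F(j))$ for all $i$ and $j$. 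Each cohomological statement in (1)--(3) and (5) thus becomes the corresponding statement for $\mathcal F$, while (4) becomes the assertion $\mathcal F \cong \mathcal O_{\P^n}^{\oplus t}$ with $t = \deg(X)\rank(\mathcal E)$.

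It therefore suffices to prove, for a vector bundle $\mathcal F$ of rank $t$ on $\P^n$, that the translated versions of (1), (2), (3) and (5) each hold if and only if $\mathcal F \cong \mathcal O_{\P^n}^{\oplus t}$. The reverse implications are immediate: one reads the cohomology of $\mathcal O_{\P^n}(j)$ off directly, noting that the twists $\mathcal O_{\P^n}(-1),\dots,\mathcal O_{\P^n}(-n)$ are acyclic, which yields (2) and (3), while the Koszul complex furnishes the linear resolution demanded by (5). For the implication $(1) \Rightarrow \mathcal F \cong \mathcal O^{\oplus t}$, the ACM hypothesis says $\mathcal F$ has no intermediate cohomology, so Horrocks' theorem splits it as $\bigoplus_i \mathcal O_{\P^n}(a_i)$; the condition $H^0(\mathcal F(-1)) = 0$ forces every $a_i \le 0$, and $h^0(\mathcal F) = t = \rank\mathcal F$ then forces every $a_i = 0$.

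The substantive direction is recovering triviality of $\mathcal F$ from the bare vanishing in (2) or (3), neither of which includes the ACM property a priori. Here I would invoke Beilinson's spectral sequence on $\P^n \times \P^n$, whose first page carries the terms $H^q(\mathcal F(-p)) \otimes \Omega^p_{\P^n}(p)$ for $0 \le p \le n$ and which converges to $\mathcal F$ concentrated in total degree $0$. Condition (3) annihilates every term with $p \ge 1$, leaving only the column $H^q(\mathcal F) \otimes \mathcal O_{\P^n}$; convergence then forces $H^q(\mathcal F) = 0$ for $q > 0$ and identifies $\mathcal F$ with $\mathcal O_{\P^n}^{\oplus h^0(\mathcal F)}$. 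The implication from (2) is the delicate one, since (2) prescribes only a staircase of vanishings along and just off the anti-diagonal rather than a full block; I expect this Beilinson collapse --- showing that the staircase still kills all higher terms and pins the Hilbert polynomial to that of $\mathcal O^{\oplus t}$ --- to be the main technical obstacle, and it is precisely the point where Beauville's diagonal argument enters. Finally, the equivalence with (5) is the algebraic translation of $\mathcal F = \pi_*\mathcal E \cong \mathcal O^{\oplus t}$ into the freeness of the section module $M$ over a Noether normalization $R \subset S/I_X$ with generators in a single degree, which by Ulrich's criterion amounts to the minimal $S$-resolution of $M$ being linear.
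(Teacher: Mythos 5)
The paper does not actually prove this proposition --- it is quoted verbatim from \cite{Beauville2016:IntroductionUlrich} and \cite{EisenbudSchreyer2003} --- so there is no in-paper argument to compare against; your reduction via a finite linear projection $\pi\colon X\to\P^n$ is exactly the route taken in those references. The setup is sound: miracle flatness makes $\pi$ flat, so $\mathcal F=\pi_*\mathcal E$ is locally free of rank $t=\deg(X)\rank(\mathcal E)$, the projection formula and $R^{>0}\pi_*=0$ transport all cohomological conditions to $\P^n$, and your treatment of $(1)\Leftrightarrow(4)$ via Horrocks and of $(3)\Rightarrow(4)$ via the collapse of the Beilinson spectral sequence to the single column $H^q(\mathcal F)\otimes\mathcal O_{\P^n}$ is correct.

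The one genuine gap is the implication starting from (2), which you explicitly leave as an ``expected'' Beilinson collapse rather than an argument. As stated it would not go through directly: the Beilinson $E_1$ page needs the full block of vanishings $H^q(\mathcal F(-p))=0$ for $1\le p\le n$, whereas (2) only hands you two anti-diagonals. The standard way to close this does not need Beilinson at all. Observe that the first half of (2), $H^i(\mathcal F(-i))=0$ for $i>0$, says precisely that $\mathcal F$ is $0$-regular in the sense of Castelnuovo--Mumford, so Mumford's theorem gives $H^i(\mathcal F(j))=0$ for all $i>0$ and $j\ge -i$. The second half, $H^j(\mathcal F(-j-1))=0$ for $j<n$, is by Serre duality the statement that $\mathcal F^\vee\otimes\omega_{\P^n}(n+1)=\mathcal F^\vee$ is $0$-regular, which dualizes back to $H^i(\mathcal F(j))=0$ for all $i<n$ and $j\le -i-1$. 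The two ranges cover every twist for $0<i<n$, so $\mathcal F$ is ACM and Horrocks splits it; the boundary vanishings $H^0(\mathcal F(-1))=0$ and $H^n(\mathcal F(-n))=0$ then pin every summand to $\mathcal O_{\P^n}$, recovering (3), (4) and the Ulrich count simultaneously. With that insertion, and with the (routine but also only sketched) identification of the $\operatorname{Tor}$-vanishing in (5) with the acyclicity in (3) via the Koszul complex on $x_0,\dots,x_N$, your proof is complete.
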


In particular, by Serre duality, we immediately have the following proposition as a consequence:
\begin{prop}\label{prop:DualOfACMUlrich}
Let $X^n \subset \P^N$ be as above, and let $H:= \mathcal O_X(1)$ be a very ample line bundle.
\begin{enumerate}
\item If $\mathcal E$ is an ACM bundle on $X$, then $\mathcal E^* (K_X)$ is also an ACM bundle.
\item When $X$ is subcanonical, that is, $K_X = \mathcal O_X (k)$ for some $k \in \Z$,  $\mathcal E$ is ACM if and only if $\mathcal E^*$ is ACM.
\item If $\mathcal E$ is an Ulrich bundle on $X$, then $\mathcal  E^* (K_X + (n+1)H)$ is an Ulrich bundle.
\end{enumerate}
\end{prop}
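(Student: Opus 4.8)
The central and essentially only tool I would use is Serre duality, which for a locally free sheaf $\mathcal F$ on the smooth projective $n$-fold $X$ takes the form
\[
H^i(\mathcal F) \cong H^{n-i}\bigl(\mathcal F^{*}(K_X)\bigr)^{*}.
\]
The plan is to reduce each of the three statements to this single identity by carefully bookkeeping the twists, so that the cohomological vanishing conditions defining the ACM and Ulrich properties get transported into one another under the involution $i \mapsto n-i$.

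For part (1), I would apply Serre duality to $\mathcal F = \mathcal E^{*}(K_X)(j)$ for an arbitrary $j \in \Z$. Writing $\mathcal F = \mathcal E^{*}\otimes\mathcal O_X(K_X + jH)$ and computing $\mathcal F^{*}(K_X)$, the two occurrences of $K_X$ cancel and one is left with $\mathcal E(-j)$, so that
\[
H^i\bigl(\mathcal E^{*}(K_X)(j)\bigr) \cong H^{n-i}\bigl(\mathcal E(-j)\bigr)^{*}.
\]
Since the condition $0 < i < n$ is equivalent to $0 < n-i < n$, and since $-j$ again ranges over all of $\Z$, the ACM vanishing for $\mathcal E$ at the index $n-i$ yields exactly the ACM vanishing for $\mathcal E^{*}(K_X)$. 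This is the heart of the whole argument.

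Part (2) I would then deduce formally from part (1). When $K_X = \mathcal O_X(k)$, part (1) shows that $\mathcal E$ being ACM implies $\mathcal E^{*}(k)$ is ACM; because the defining vanishing ranges over all twists $j$, the ACM condition is invariant under tensoring with $\mathcal O_X(\pm k)$, so this is the same as saying $\mathcal E^{*}$ is ACM, and applying the argument once more to $\mathcal E^{*}$ together with $\mathcal E^{**}\cong\mathcal E$ gives the converse. For part (3), I would instead invoke the acyclicity characterization of Ulrich bundles, item~(\ref{item: Ulrich as Acyclic condition}) of Proposition~\ref{prop: Ulrich Equiv conditions}, namely that $\mathcal E$ is Ulrich if and only if $H^i(\mathcal E(-j)) = 0$ for all $i$ and all $1 \le j \le n$. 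Applying Serre duality to $\mathcal F = \mathcal E^{*}(K_X + (n+1)H)(-j)$ and cancelling the canonical twists exactly as before gives
\[
H^i\bigl(\mathcal E^{*}(K_X + (n+1)H)(-j)\bigr) \cong H^{n-i}\bigl(\mathcal E(-(n+1-j))\bigr)^{*}.
\]
Setting $j' = n+1-j$, the range $1 \le j \le n$ corresponds precisely to $1 \le j' \le n$, and as $i$ varies so does $n-i$; thus the Ulrich acyclicity for $\mathcal E$ transports verbatim to $\mathcal E^{*}(K_X + (n+1)H)$, which is therefore Ulrich by the same characterization.

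Since each part is a direct manipulation of twists under one application of Serre duality, there is no genuine obstacle to overcome; the only points demanding care are that the two copies of $K_X$ cancel correctly and that the range $1 \le j \le n$ appearing in the Ulrich criterion is stable under the substitution $j \mapsto n+1-j$, which is precisely what makes the symmetry $i \mapsto n-i$ send the hypothesis to the desired conclusion.
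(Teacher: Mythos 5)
Your argument is correct and is exactly the route the paper intends: the paper gives no written proof, stating only that the proposition follows ``immediately by Serre duality'' from Proposition~\ref{prop: Ulrich Equiv conditions}, and your careful bookkeeping of the twists (the cancellation of the two copies of $K_X$ and the symmetry $j \mapsto n+1-j$ matching $i \mapsto n-i$ in the acyclicity criterion) is precisely the verification being left to the reader. Nothing to add.
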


The following proposition about the stability is very useful in later sections.
\begin{prop}[{\cite[Theorem 2.9]{CasanellasHartshorne:Ulrich}}]\label{prop: semistability of Ulrich}
Let $X$ be a smooth projective variety, and let $\mathcal E$ be an Ulrich bundle on $X$. Then
\begin{enumerate}
\item $\mathcal E$ is semistable and $\mu$-semistable.
\item If $0 \to \mathcal E' \to \mathcal E \to \mathcal E'' \to 0$ is an exact sequence of coherent sheaves with $\mathcal E''$ torsion-free, and $\mu(\mathcal E') = \mu(\mathcal E)$, then both $\mathcal E'$ and $\mathcal E''$ are Ulrich.
\item If $\mathcal E$ is stable, then it is also $\mu$-stable.
\end{enumerate}
\end{prop}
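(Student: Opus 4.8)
The plan is to transfer every (semi)stability question from $X$ to projective space through a finite linear projection. Fix a finite linear projection $\pi : X \to \P^n$, so that $\pi^* \mathcal O_{\P^n}(1) = \mathcal O_X(1)$ and, by Proposition~\ref{prop: Ulrich Equiv conditions}(4), $\pi_* \mathcal E \cong \mathcal O_{\P^n}^{\oplus dr}$ where $d = \deg X$ and $r = \rank \mathcal E$. Since $\pi$ is finite, the functor $\pi_*$ is exact, preserves injections, sends torsion-free sheaves to torsion-free sheaves, and (via the projection formula together with the vanishing of higher direct images) satisfies $P_{\mathcal F}(t) = P_{\pi_* \mathcal F}(t)$ for every coherent sheaf $\mathcal F$ on $X$. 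As $\rank(\pi_* \mathcal F) = d \cdot \rank(\mathcal F)$, the reduced Hilbert polynomials obey $p_{\mathcal F} = d \cdot p_{\pi_* \mathcal F}$; in particular $p_{\mathcal E} = d \cdot p_{\mathcal O^{\oplus dr}_{\P^n}} = d\binom{t+n}{n}$, and $\mu(\mathcal F) = \mu(\mathcal E)$ on $X$ if and only if $\mu(\pi_* \mathcal F) = 0$ on $\P^n$.

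For part (1) I would take a subsheaf $\mathcal F \subseteq \mathcal E$ and push it forward to $\pi_* \mathcal F \subseteq \mathcal O^{\oplus dr}_{\P^n}$. Because the trivial bundle is Gieseker semistable, every subsheaf has reduced Hilbert polynomial at most $\binom{t+n}{n}$, whence $p_{\mathcal F} = d \cdot p_{\pi_* \mathcal F} \le d\binom{t+n}{n} = p_{\mathcal E}$. Thus $\mathcal E$ is Gieseker semistable, and $\mu$-semistability then follows from the standard implication that Gieseker semistability implies slope semistability.

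For part (2), applying $\pi_*$ yields a short exact sequence $0 \to \mathcal A \to \mathcal O^{\oplus dr}_{\P^n} \to \mathcal B \to 0$ with $\mathcal A = \pi_* \mathcal E'$, $\mathcal B = \pi_* \mathcal E''$, where $\mathcal B$ is torsion-free and $\mu(\mathcal A) = 0$ by the hypothesis $\mu(\mathcal E') = \mu(\mathcal E)$; hence $\mathcal A$ is a \emph{saturated} subsheaf of the trivial bundle with $c_1(\mathcal A) = 0$. The heart of the argument—and what I expect to be the main obstacle—is the rigidity statement that such an $\mathcal A$ must be a trivial subbundle $\mathcal O^{\oplus a}_{\P^n}$. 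I would prove it by induction on the rank: as a slope-zero subsheaf of the $\mu$-semistable $\mathcal O^{\oplus dr}_{\P^n}$, $\mathcal A$ is itself $\mu$-semistable, so it contains a rank-one slope-zero subsheaf whose saturation inside $\mathcal O^{\oplus dr}_{\P^n}$ is forced to be $\mathcal O_{\P^n}$ and lies in $\mathcal A$; the quotient $\mathcal A/\mathcal O_{\P^n}$ is again saturated of slope zero in $\mathcal O^{\oplus(dr-1)}_{\P^n}$, and the resulting extension splits because $\Ext^1(\mathcal O_{\P^n}, \mathcal O_{\P^n}) = H^1(\mathcal O_{\P^n}) = 0$. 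Since the inclusion $\mathcal O^{\oplus a}_{\P^n} \hookrightarrow \mathcal O^{\oplus dr}_{\P^n}$ is then a constant full-rank matrix with torsion-free cokernel, $\mathcal B \cong \mathcal O^{\oplus(dr-a)}_{\P^n}$ is trivial as well, and Proposition~\ref{prop: Ulrich Equiv conditions}(4) shows that both $\mathcal E'$ and $\mathcal E''$ are Ulrich.

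Finally, for part (3) I would argue by contradiction. If $\mathcal E$ is Gieseker stable but not $\mu$-stable, then by the $\mu$-semistability of part (1) there is a subsheaf $\mathcal F$ with $0 < \rank \mathcal F < \rank \mathcal E$ and $\mu(\mathcal F) = \mu(\mathcal E)$, which after passing to its saturation may be assumed to have torsion-free quotient. Part (2) then forces $\mathcal F$ to be Ulrich, so $p_{\mathcal F} = d\binom{t+n}{n} = p_{\mathcal E}$, contradicting the strict inequality $p_{\mathcal F} < p_{\mathcal E}$ demanded by Gieseker stability. Hence $\mathcal E$ is $\mu$-stable.
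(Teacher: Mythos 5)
The paper does not prove this statement itself --- it is quoted from Casanellas--Hartshorne --- and your route (push everything forward along a finite linear projection $\pi:X\to\P^n$, use $\pi_*\mathcal E\cong\mathcal O_{\P^n}^{\oplus dr}$, and read off all (semi)stability statements from the trivial bundle) is exactly the route of the cited source. Parts (1) and (3) are correct as written: exactness of $\pi_*$ for a finite map, equality of Hilbert polynomials, the affine relation between $\mu$ on $X$ and $\mu$ on $\P^n$, and the deduction of $\mu$-stability from (1) and (2) are all sound.

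There is, however, a genuine gap precisely at the step you flag as the heart of part (2). You assert that since $\mathcal A=\pi_*\mathcal E'$ is a slope-zero subsheaf of the $\mu$-semistable trivial bundle, it is $\mu$-semistable and therefore ``contains a rank-one slope-zero subsheaf.'' Semistability alone does not imply this: a $\mu$-stable sheaf of slope $0$ and rank $\geq 2$ contains no rank-one subsheaf of slope $0$ (such a subsheaf would violate stability), and stable slope-zero sheaves certainly exist, so the implication as stated is false and your induction cannot start. What is true --- and what must be proved --- is that a \emph{saturated} subsheaf $\mathcal A\subseteq\mathcal O_{\P^n}^{\oplus N}$ with $c_1(\mathcal A)=0$ is a constant trivial summand, and this requires using the ambient trivial bundle, not just semistability of $\mathcal A$. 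Two standard ways to close the gap: (i) since $\mathcal A$ is a subbundle of $\mathcal O^{\oplus N}$ outside a closed set of codimension $\geq 2$ and $\det\mathcal A\cong\mathcal O_{\P^n}$, the induced section of $\wedge^a\mathcal O^{\oplus N}$ extends to a constant decomposable vector in $\wedge^a\C^N$, which pins down a fixed $a$-plane $W$ with $\mathcal A=W\otimes\mathcal O_{\P^n}$; or (ii) restrict to a general line $\ell$: $\mathcal A\big\vert_\ell$ is a degree-$0$ subsheaf of $\mathcal O_\ell^{\oplus N}$, hence $\cong\mathcal O_\ell^{\oplus a}$, and a torsion-free sheaf trivial on general lines is trivial. (Alternatively one can run an induction on rank-one \emph{quotients} rather than subsheaves: composing $\mathcal A\hookrightarrow\mathcal O^{\oplus N}$ with a coordinate projection produces a slope-zero rank-one quotient, which does follow from semistability; but that is a different induction from the one you wrote.) Once this rigidity lemma is in place, the rest of your part (2) and all of part (3) go through.
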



\section{Geometric approach via Serre correspondence}\label{section:Serre}
In this section, we show the existence of Ulrich bundles using Serre correspondence.

\subsection{Serre correspondence}
We briefly recall Serre correspondence which enables us to construct a vector bundle as an extension from a codimension 2 subscheme. To obtain a vector bundle, such a subscheme has to satisfy certain generating conditions. For instance, it is well-known that a 0-dimensional subscheme on a smooth surface should satisfy Cayley-Bacharach condition to provide a locally free extension. For higher dimensional cases, the situation gets much more complicated. For example, a curve in $\P^3$ occurs as the zero locus of a rank 2 vector bundle on $\P^3$ if and only if it is a local complete intersection and subcanonical \cite{Hartshorne1978}. It is clear that not all curves come from vector bundles. When it happens, we cannot construct a vector bundle as an extension. However, still in many cases, it is a powerful tool providing constructions of vector bundles. We refer to \cite{Arrondo2007:SerreCorrespondence} for the proof and more details.

\begin{thm}[Serre correspondence] \label{thm:SerreCorrespondence}
Let $X$ be a smooth variety and let $Y \subset X$ be a local complete intersection subscheme of codimension $2$ in $X$. Let $\mathcal N$ be the normal bundle of $Y$ in $X$ and let $\mathcal L$ be a line bundle on $X$ such that $H^2(\mathcal L^*) = 0$. Assume that $(\wedge^2 \mathcal N \otimes \mathcal L^* )|_Y$ has $(r-1)$ generating global sections $s_1, \ldots, s_{r-1}$. Then there is a rank $r$ vector bundle $\mathcal E$ as an extension
\[
0 \to \mathcal O_X^{r-1} \xrightarrow{(\alpha_1, \ldots, \alpha_{r-1})} \mathcal E \longrightarrow \mathcal I_{Y/X} (\mathcal L) \to 0
\]
such that the dependency locus of $(r-1)$ global sections $\alpha_1, \ldots, \alpha_{r-1}$ of $\mathcal E$ is $Y$ with $\sum_{i=1}^{r-1} s_i \alpha_{i}|_Y = 0$. Moreover, if $H^1(\mathcal L^*) = 0$, such an $\mathcal E$ is unique up to isomorphism.
\end{thm}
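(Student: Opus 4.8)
The plan is to identify $\mathcal E$ with the middle term of an extension classified by $\Ext^1(\mathcal I_{Y/X}(\mathcal L), \mathcal O_X^{r-1})$ and to translate the generating hypothesis on $s_1, \dots, s_{r-1}$ into the condition that forces $\mathcal E$ to be locally free. The first step is to compute the relevant Ext groups. Since $Y$ is a codimension $2$ local complete intersection, the Koszul resolution of $\mathcal O_Y$ gives $\mathcal{E}xt^i(\mathcal O_Y, \mathcal O_X) = 0$ for $i \ne 2$ and $\mathcal{E}xt^2(\mathcal O_Y, \mathcal O_X) \cong \wedge^2 \mathcal N$. Feeding this into the long exact sequence of $\mathcal{E}xt^{\bullet}(-,\mathcal O_X)$ attached to $0 \to \mathcal I_{Y/X} \to \mathcal O_X \to \mathcal O_Y \to 0$ yields $\mathcal{H}om(\mathcal I_{Y/X}, \mathcal O_X) \cong \mathcal O_X$ and $\mathcal{E}xt^1(\mathcal I_{Y/X}, \mathcal O_X) \cong \wedge^2 \mathcal N$. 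After twisting by $\mathcal L^*$ and running the local-to-global spectral sequence, its five-term exact sequence reads
\[
0 \to H^1(\mathcal L^*) \to \Ext^1\bigl(\mathcal I_{Y/X}(\mathcal L), \mathcal O_X\bigr) \to H^0\bigl((\wedge^2 \mathcal N \otimes \mathcal L^*)|_Y\bigr) \to H^2(\mathcal L^*).
\]

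The hypothesis $H^2(\mathcal L^*) = 0$ makes the middle map surjective, so each generating section $s_i$ lifts to a class $\xi_i \in \Ext^1(\mathcal I_{Y/X}(\mathcal L), \mathcal O_X)$. Assembling $\xi = (\xi_1, \dots, \xi_{r-1})$ into a single class in $\Ext^1(\mathcal I_{Y/X}(\mathcal L), \mathcal O_X^{r-1})$ produces the extension $0 \to \mathcal O_X^{r-1} \to \mathcal E \to \mathcal I_{Y/X}(\mathcal L) \to 0$, and the image of the first factor supplies the sections $\alpha_1, \dots, \alpha_{r-1}$.

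The crux, and the step I expect to be the main obstacle, is proving that $\mathcal E$ is locally free of rank $r$. Away from $Y$ the sheaf $\mathcal I_{Y/X}(\mathcal L)$ is the line bundle $\mathcal L$, so there $\mathcal E$ is an extension of locally free sheaves and hence locally free; the difficulty is concentrated along $Y$. I would argue locally: near a point $y \in Y$, write $\mathcal I_{Y/X} = (f,g)$ for a regular sequence, so that the Koszul complex both resolves $\mathcal O_Y$ and trivializes the line bundle $(\wedge^2 \mathcal N \otimes \mathcal L^*)|_Y \cong \mathcal{E}xt^1(\mathcal I_{Y/X}(\mathcal L), \mathcal O_X)$ on $Y$. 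The image of $\xi$ under the middle map of the five-term sequence is precisely $(s_1, \dots, s_{r-1})$, and \emph{generating} means these sections have no common zero on $Y$; thus at every $y$ some $s_i$ is a local generator. A local change of basis of $\mathcal O_X^{r-1}$ carrying that $s_i$ to the first coordinate exhibits the rank $2$ sub-extension with nowhere-vanishing class, which is the classical Serre extension and is locally free of rank $2$, while the remaining $r-2$ summands split off as $\mathcal O_X^{r-2}$; hence $\mathcal E$ is locally free of rank $r$ near $y$. Matching this nowhere-vanishing condition against the Koszul-local structure is where the real work lies.

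It remains to identify the dependency locus and to verify the relation and the uniqueness. By construction the composite $\mathcal O_X^{r-1} \to \mathcal E \to \mathcal I_{Y/X}(\mathcal L)$ degenerates exactly on $Y$, so the dependency locus of $\alpha_1, \dots, \alpha_{r-1}$ is $Y$, and tracing the boundary map identifies the Koszul syzygy with the relation $\sum_{i=1}^{r-1} s_i \alpha_i|_Y = 0$. Finally, when $H^1(\mathcal L^*) = 0$ the five-term sequence gives an isomorphism $\Ext^1(\mathcal I_{Y/X}(\mathcal L), \mathcal O_X) \xto{\sim} H^0((\wedge^2 \mathcal N \otimes \mathcal L^*)|_Y)$, so each $\xi_i$, and therefore the class $\xi$ and the isomorphism type of $\mathcal E$, is determined uniquely by the sections $s_i$.
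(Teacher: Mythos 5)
The paper does not prove this theorem; it defers entirely to the cited reference \cite{Arrondo2007:SerreCorrespondence}, and your argument is essentially the standard proof given there: compute $\varEnd$-free data via the Koszul resolution to get $\mathcal Ext^1(\mathcal I_{Y/X}(\mathcal L),\mathcal O_X)\simeq(\wedge^2\mathcal N\otimes\mathcal L^*)|_Y$, lift the generating sections through the five-term exact sequence using $H^2(\mathcal L^*)=0$, check local freeness pointwise by reducing to the rank-$2$ Koszul model where some $s_i$ is a unit, and read off uniqueness from injectivity of $\Ext^1\to H^0(\mathcal Ext^1)$ when $H^1(\mathcal L^*)=0$. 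This is correct in outline; the only slip is the phrase about the composite $\mathcal O_X^{r-1}\to\mathcal E\to\mathcal I_{Y/X}(\mathcal L)$ ``degenerating'' on $Y$ --- that composite is zero, and what identifies the dependency locus is that the cokernel $\mathcal I_{Y/X}(\mathcal L)$ of $\mathcal O_X^{r-1}\to\mathcal E$ fails to be invertible exactly along $Y$.
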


\subsection{ACM bundles of rank 2 via Serre correspondence}\label{sec: Serre construction}
	From now on, let $Q_0, Q_\infty$ be two smooth quadric hypersurfaces in $\P^5$ meeting transversally and let $X = Q_0 \cap Q_\infty \subset \P^5$ be a smooth Fano 3-fold of degree 4 and index 2, \emph{i.e.}, $\omega_X = \mathcal O_X(-2)$.
	
	Let $[H_X]$, $[L_X]$, $[P_X]$ be the class of a hyperplane section, a line, and a point in $X$ respectively. Then, 
	\begin{equation}\label{eq: Cohomologies of X}
		H^2(X,\Z) \simeq \Z\cdot [H_X],\quad H^4(X,\Z) \simeq \Z \cdot [L_X],\quad \text{and}\quad H^6(X,\Z) \simeq \Z \cdot [P_X]. 
	\end{equation}
	The ring structure is given as follows: $H_X^2 = 4L_X$, $H_X \cdot L_X = P_X$.
	For a vector bundle $\mathcal F$ on $X$, we define its slope $\mu$ with respect to $H$ by
	\[
		\mu_H(\mathcal F) := \frac{\deg_H \mathcal F}{\op{rank} \mathcal F}
	\]
	By virtue of (\ref{eq: Cohomologies of X}), we fix our convention as follows.
	\begin{nota}
		Via the isomorphisms $\Z \cdot [H_X] \simeq \Z$, $\Z \cdot [L_X] \simeq \Z$, and $\Z \cdot [P_X] \simeq \Z$, we may regard $c_{i}(\mathcal F)$ as an integer, by omitting the cyclic generators of $H^{2i}(X,\Z)$. Under this convention, one can easily see that
		\[
			\mu_H(\mathcal F) = \frac{c_1(\mathcal F) \deg X}{\op{rank} \mathcal F} = 4 \cdot \frac{c_1(\mathcal F)}{\op{rank} \mathcal F}
		\]
		We also omit the redundant coefficient $4$ in the formula and redefine the slope of $\mathcal F$ as follows:
		\[
			\mu(\mathcal F):= \frac{c_1(\mathcal F)}{\op{rank}\mathcal F}.
		\]
	\end{nota}
	The following proposition is useful in later sections.
	\begin{prop}[{\cite[Proposition~1.2.7]{HuybrechtsLehn:ModuliofSheaves}}]\label{prop: Stability vanishing}
		Let $\mathcal E$ and $\mathcal E'$ be $\mu$-stable bundles with $\mu(\mathcal E) > \mu(\mathcal E')$. Then $\Hom(\mathcal E,\mathcal E') = 0$.
	\end{prop}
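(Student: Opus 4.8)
The plan is to argue by contradiction, using only the defining inequalities of $\mu$-stability applied to the image of a hypothetical nonzero map. Suppose, toward a contradiction, that $\phi \colon \mathcal E \to \mathcal E'$ is a nonzero homomorphism, and set $\mathcal F := \op{im}(\phi)$, which is a nonzero coherent sheaf that is simultaneously a quotient of $\mathcal E$ and a subsheaf of $\mathcal E'$. My strategy is to sandwich the slope $\mu(\mathcal F)$ between $\mu(\mathcal E)$ and $\mu(\mathcal E')$ from the two sides and read off a contradiction with the hypothesis $\mu(\mathcal E) > \mu(\mathcal E')$.

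First I would exploit that $\mathcal F$ is a quotient of $\mathcal E$ to obtain $\mu(\mathcal F) \ge \mu(\mathcal E)$. If $\rank \mathcal F < \rank \mathcal E$, this is exactly the quotient form of the destabilizing inequality forbidden by $\mu$-stability of $\mathcal E$, so in fact $\mu(\mathcal F) > \mu(\mathcal E)$. If instead $\rank \mathcal F = \rank \mathcal E$, then the kernel of $\mathcal E \twoheadrightarrow \mathcal F$ has rank zero, hence is a torsion subsheaf of the torsion-free (indeed locally free) sheaf $\mathcal E$, hence is zero; thus $\phi$ is injective and $\mathcal F \cong \mathcal E$, giving $\mu(\mathcal F) = \mu(\mathcal E)$.

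Next I would use that $\mathcal F$ is a subsheaf of $\mathcal E'$ to obtain the reverse bound $\mu(\mathcal F) \le \mu(\mathcal E')$. When $\rank \mathcal F < \rank \mathcal E'$ this is again immediate from $\mu$-stability of $\mathcal E'$. The only remaining case is $\rank \mathcal F = \rank \mathcal E'$, where $\mathcal E'/\mathcal F$ is a torsion sheaf whose first Chern class is therefore effective, so $c_1(\mathcal F) \le c_1(\mathcal E')$ and hence $\mu(\mathcal F) \le \mu(\mathcal E')$. Chaining the two bounds yields $\mu(\mathcal E) \le \mu(\mathcal F) \le \mu(\mathcal E')$, contradicting $\mu(\mathcal E) > \mu(\mathcal E')$; therefore $\phi = 0$ and $\Hom(\mathcal E, \mathcal E') = 0$.

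The routine part is the slope sandwich itself; the only genuine care is needed in the two boundary cases of full rank, where the defining strict inequality of $\mu$-stability does not directly apply and one must instead invoke torsion-freeness (on the quotient side) and effectivity of the first Chern class of a torsion quotient (on the subsheaf side). I expect this boundary bookkeeping, rather than any deep geometric input, to be the main point to get right.
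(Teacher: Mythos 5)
Your argument is correct: the paper itself gives no proof and simply cites \cite[Proposition~1.2.7]{HuybrechtsLehn:ModuliofSheaves}, and the slope sandwich on $\op{im}(\phi)$ — including the careful handling of the two full-rank boundary cases via torsion-freeness and effectivity of $c_1$ of a torsion quotient — is exactly the standard argument given there.
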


	Applying Proposition~\ref{prop:DualOfACMUlrich} to $X = Q_0 \cap Q_\infty$, we get the following:
	\begin{prop}
		Let $\mathcal E$ be an Ulrich bundle of rank $r$ on $X = Q_0 \cap Q_\infty$. Then,
		\begin{enumerate}
			\item $\mu(\mathcal E)=1$, and
			\item $\mathcal E^*(2)$ is an Ulrich bundle.
		\end{enumerate}
	\end{prop}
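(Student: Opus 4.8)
The plan is to obtain both assertions directly from Proposition~\ref{prop:DualOfACMUlrich}, feeding in the two numerical invariants that distinguish $X$: since $X$ is Fano of index $2$ one has $K_X = \mathcal O_X(-2)$, and $\dim X = n = 3$ with $\deg X = H^3 = 4$. Part~(2) should then be immediate. By Proposition~\ref{prop:DualOfACMUlrich}(3), if $\mathcal E$ is Ulrich then $\mathcal E^*(K_X + (n+1)H)$ is Ulrich; substituting $K_X = -2H$ and $n+1 = 4$ gives $K_X + (n+1)H = -2H + 4H = 2H$, so $\mathcal E^*(2)$ is Ulrich, with nothing further to check.

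For part~(1) I would compute $c_1(\mathcal E)$ and prefer a duality shortcut that avoids the Todd class of $X$. Every rank-$r$ Ulrich bundle has Hilbert polynomial $\chi(\mathcal E(t)) = (\deg X)\,r\binom{t+n}{n} = 4r\binom{t+3}{3}$: this follows from the characterization $\pi_*\mathcal E \cong \mathcal O_{\P^3}^{\oplus 4r}$ in Proposition~\ref{prop: Ulrich Equiv conditions}(4), since a finite linear projection preserves Hilbert polynomials. In particular the first Chern class of a rank-$r$ Ulrich bundle is determined by $r$ alone. Since part~(2) exhibits $\mathcal E^*(2)$ as another rank-$r$ Ulrich bundle, it must share the $c_1$ of $\mathcal E$; but in the integer convention $c_1(\mathcal E^*(2)) = 2r - c_1(\mathcal E)$, and equating the two yields $c_1(\mathcal E) = r$, hence $\mu(\mathcal E) = c_1(\mathcal E)/r = 1$. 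As an alternative sanity check I would instead extract the degree-$2$ coefficient of $\chi(\mathcal E(t))$ by Riemann--Roch, using $H^3 = 4$ and $K_X \cdot H^2 = -8$; matching this against the coefficient $4r$ of $t^2$ in $4r\binom{t+3}{3}$ produces the equation $2c_1(\mathcal E) + 2r = 4r$, again giving $c_1(\mathcal E) = r$.

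The argument is essentially routine and I do not expect a genuine obstacle. The only places demanding care are the bookkeeping in the integer convention for Chern classes---in particular remembering that twisting by $\mathcal O_X(2)$ shifts $c_1$ by $2r$ rather than by $2$, and that the factor $\deg X = 4$ has already been absorbed into the normalized slope $\mu$---and, for the shortcut, the elementary observation that equal Hilbert polynomials force equal $c_1$ for bundles of equal rank. I would write up the duality line as the main proof, since it requires neither the Todd class nor $c_2(T_X)$, and mention the Riemann--Roch computation only as a cross-check.
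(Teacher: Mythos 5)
Your proposal is correct and matches the paper, which states this proposition without proof as an immediate consequence of Proposition~\ref{prop:DualOfACMUlrich} applied with $K_X=\mathcal O_X(-2)$ and $n=3$. Your duality argument for part~(1) --- that the Hilbert polynomial $4r\binom{t+3}{3}$ pins down $c_1$, so $c_1(\mathcal E)=c_1(\mathcal E^*(2))=2r-c_1(\mathcal E)$ forces $c_1(\mathcal E)=r$ --- is a clean way to supply the detail the paper leaves implicit, and your Riemann--Roch cross-check is also correct.
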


In \cite{ArrondoCosta2000}, Arrondo and Costa made a comprehensive study of ACM bundles on $X$ extending \cite{SzurekWisniewski1993}. They classified the possible Chern classes for ACM bundles under a mild assumption. In particular, they classified all the rank 2 ACM bundles on $X$.

\begin{thm}[{\cite[Theorem 3.4]{ArrondoCosta2000}}]\label{thm: Arrondo-Costa ACM rk 2}
An indecomposable rank $2$ ACM vector bundle on $X$ is a twist of one of the following;
\begin{enumerate}
\item A line type: a semistable vector bundle $\mathcal E_{l}$ fitting in an exact sequnce
\[
0 \to \mathcal O_X \to \mathcal E_{l} \to \mathcal I_{l} \to 0
\]
where ${l} \subset X$ is a line contained in $X$;
\item A conic type: a stable vector bundle $\mathcal E_\lambda$ fitting in an exact sequence
\[
0 \to \mathcal O_X \to \mathcal E_\lambda \to \mathcal I_\lambda (1) \to 0
\]
where $\lambda \subset X$ is a conic contained in $X$;
\item An elliptic curve type: a stable vector bundle $\mathcal E_e$ fitting in an exact sequence
\[
0 \to \mathcal O_X \to \mathcal E_e \to \mathcal I_e (2) \to 0
\]
where $e \subset X$ is an elliptic curve of degree $6$. 
\end{enumerate}
\end{thm}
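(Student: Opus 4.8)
The plan is to reduce an arbitrary indecomposable rank $2$ ACM bundle to a curve via Serre correspondence, translate the ACM condition into projective normality of that curve, and then bound its numerical invariants. After twisting we may assume $\mathcal E$ is \emph{initialized}, i.e. $H^0(\mathcal E) \neq 0$ but $H^0(\mathcal E(-1)) = 0$; since a twist of an ACM bundle is ACM this loses no generality. Let $s \in H^0(\mathcal E)$ be a general section. If $s$ vanished along a divisor $mH$ then $\mathcal O_X(mH) \hookrightarrow \mathcal E$ would force $H^0(\mathcal E(-1)) \neq 0$ unless $m \le 0$, while effectivity gives $m \ge 0$; hence the zero locus $Y$ of $s$ is either empty or a curve of codimension $2$. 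If $Y$ is empty then $\mathcal E \cong \mathcal O_X \oplus \mathcal O_X(c_1)$, contradicting indecomposability, so $Y$ is a nonempty curve and Theorem~\ref{thm:SerreCorrespondence} yields
\[
0 \to \mathcal O_X \to \mathcal E \to \mathcal I_{Y/X}(c_1) \to 0, \qquad c_1 = c_1(\mathcal E).
\]

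Next I would identify the ACM condition on $\mathcal E$ with a condition on $Y$. Since $X$ is a complete intersection, $H^1(\mathcal O_X(j)) = H^2(\mathcal O_X(j)) = 0$ for all $j$, so the sequence gives $H^1(\mathcal E(j)) \cong H^1(\mathcal I_{Y/X}(c_1+j))$. Because $\rank \mathcal E = 2$ we have $\mathcal E^* \cong \mathcal E(-c_1)$, and Serre duality with $\omega_X = \mathcal O_X(-2)$ gives $H^2(\mathcal E(j)) \cong H^1(\mathcal E(-c_1-j-2))^*$; hence the full ACM condition is equivalent to $H^1(\mathcal E(j)) = 0$ for all $j$, i.e. to $H^1(\mathcal I_{Y/X}(m)) = 0$ for all $m$, which is exactly projective normality of $Y$. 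Adjunction applied to the Serre correspondence gives $\omega_Y = \mathcal O_Y(c_1 - 2)$, whence, writing $d = \deg_H Y$ and $g = p_a(Y)$,
\[
2g - 2 = (c_1 - 2)\,d.
\]

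The heart of the argument is a numerical classification. Projective normality forces $H^0(\mathcal I_{Y/X}(m)) = H^0(\mathcal O_X(m)) - H^0(\mathcal O_Y(m))$ for every $m$, and the sequence $0 \to \mathcal I_{Y/X} \to \mathcal O_X \to \mathcal O_Y \to 0$ gives $h^0(\mathcal O_Y) = 1$ and $h^0(\mathcal O_Y(-1)) = 0$, so $Y$ is connected. The initialization $H^0(\mathcal E(-1)) = 0$ reads $h^0(\mathcal I_{Y/X}(c_1-1)) = 0$, i.e. $h^0(\mathcal O_Y(c_1-1)) = h^0(\mathcal O_X(c_1-1))$; evaluating the left side by Riemann--Roch on $Y$ (the term $h^1(\mathcal O_Y(c_1-1)) = h^0(\mathcal O_Y(-1))$ vanishes) and using the adjunction relation yields $\tfrac{1}{2}c_1 d = h^0(\mathcal O_X(c_1-1))$. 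On the other hand $h^0(\mathcal I_{Y/X}(c_1-2)) \ge 0$ together with $h^0(\mathcal O_Y(c_1-2)) = h^0(\omega_Y) = g$ gives $g \le h^0(\mathcal O_X(c_1-2))$. Substituting $g = 1 + \tfrac{1}{2}(c_1-2)d$ and the Hilbert function $h^0(\mathcal O_X(m)) = \binom{m+5}{5} - 2\binom{m+3}{5} + \binom{m+1}{5}$, a short computation shows these two relations are incompatible once $c_1 \ge 3$, while $2g-2 = (c_1-2)d$ with $g \ge 0$, $d \ge 1$ forces $c_1 \ge 0$. The cases $c_1 = 0,1,2$ then give $(d,g) = (1,0)$ (a line), $(d,g) = (2,0)$ (a conic), and $g = 1$ with $Y$ projectively normal and spanning $\P^5$, necessarily an elliptic normal sextic. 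I expect this bounding step --- pinning down exactly the admissible triples $(c_1,d,g)$ and in particular excluding every $c_1 \ge 3$ --- to be the main obstacle.

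Finally I would run the construction in reverse: for a line, a conic, or an elliptic sextic $Y \subset X$ one checks $\det \mathcal N_{Y/X} = \mathcal O_Y(c_1)$ with $c_1 = 0,1,2$ respectively, so the hypotheses of Theorem~\ref{thm:SerreCorrespondence} hold (using $H^1(\mathcal O_X) = H^2(\mathcal O_X) = 0$), and that $Y$ is ACM, producing the three extensions in the statement. Indecomposability follows since $c_2(\mathcal E) = d > 0$ prevents any splitting compatible with the computed $c_1$. For (semi)stability, a destabilizing saturated sub-line-bundle $\mathcal O_X(k) \hookrightarrow \mathcal E$ would satisfy $k \ge \mu(\mathcal E) = c_1/2$ and $H^0(\mathcal E(-k)) \neq 0$; the initialization $H^0(\mathcal E(-1)) = 0$ excludes $k \ge 1$, which leaves only $k = 0 = \mu$ in the line case (strict semistability) and forces stability in the conic and elliptic cases.
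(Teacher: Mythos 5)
The paper does not prove this theorem; it is imported verbatim from Arrondo--Costa, so there is no internal argument to measure you against. Your reconstruction takes the standard route (and, in spirit, the original one): Hartshorne--Serre correspondence applied to a section of the initialized twist, translation of the ACM condition into $H^1(\mathcal I_{Y/X}(m))=0$ for all $m$ via $\mathcal E^*\simeq\mathcal E(-c_1)$ and Serre duality, and then elimination of all but three numerical types. The key computation is correct: with $h^0(\mathcal O_X(m))=\tfrac23m^3+2m^2+\tfrac73m+1$ for $m\ge -1$, the initialization identity $\tfrac12 c_1 d=h^0(\mathcal O_X(c_1-1))$ combined with $g=1+\tfrac12(c_1-2)d$ gives
\[
g-h^0(\mathcal O_X(c_1-2))=\tfrac23(c_1-1)(c_1-2)>0\qquad\text{for }c_1\ge 3,
\]
contradicting $h^0(\mathcal I_{Y/X}(c_1-2))\ge 0$; so the exclusion of $c_1\ge3$ is exactly as clean as you claim, and the cases $c_1=0,1,2$ give $(d,g)=(1,0),(2,0),(6,1)$ as stated. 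The reduction of $H^2$-vanishing to $H^1$-vanishing, the connectedness of $Y$ from $H^1(\mathcal I_{Y/X})=0$, the (semi)stability discussion, and the indecomposability via $c_2$ are all fine.

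The only real gap is the last step of each case: you obtain a connected ACM locally complete intersection curve $Y$ of the right degree and arithmetic genus, but the theorem names a \emph{line}, a \emph{conic}, and an \emph{elliptic curve}; a priori a connected degree-$6$, genus-$1$ ACM curve could be reducible or non-reduced (e.g.\ a cycle of six lines has $d=6$ and $p_a=1$). For degree $1$ nothing more is needed, and for degree $2$ one checks $h^0(\mathcal I_{Y/\P^5}(1))=3$ so $Y$ is a plane conic in the scheme-theoretic sense; but for $c_1=2$ you should take a \emph{general} section: your own numerology gives $h^0(\mathcal E)=1+h^0(\mathcal I_{Y/X}(2))=8=\deg(X)\rank(\mathcal E)$, so $\mathcal E$ is Ulrich, hence globally generated, and Bertini then makes the zero locus of a general section a smooth elliptic normal sextic. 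This is a fixable omission, not a flaw in the method.
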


It is classically well-known that the Fano scheme $F(X)$ of lines $l \subset X$ is isomorphic to the Jacobian $J(C)$ of the hyperelliptic curve $C$ of genus 2 associated to $X$\,(see \cite[Theorem~5]{NarasimhanRamanan:ModuliofVectBdl}, \cite[Theorem~2]{Newstead:StableBundlesofRank2OddDeg} or \cite{Reid:PhD}). Since $\mathcal E_l$ has the unique global section up to constants, the space also coincides with the space of line type ACM bundles. 

Conic type ACM bundles are also well understood as in the following way. Given a conic $\lambda \subset X$, note that there is only one quadric $Q \in \mathfrak d := \lvert Q_0 + t Q_\infty \rvert_{t \in \P^1}$ in a pencil containing the plane $\Lambda = \left< \lambda \right>$. It is clear that $\Lambda \cap X = \lambda$. Since $Q$ is a 4-dimensional quadric, there is a spinor bundle whose global sections sweep out a family of planes in $Q$ containing $\Lambda$. The bundle $\mathcal E_\lambda$ is the restriction of this spinor bundle. Hence, the moduli of conic type ACM bundle can be naturally identified with the space of spinor bundles associated to the pencil $\mathfrak d$. 

The last case is particularly interesting. When $e \subset X \subset \P^5$ is an elliptic normal curve of degree 6, we have $h^0(\mathcal I_e(1))=0$ and $h^0(\mathcal I_e(2)) = h^0(\mathcal I_{e/\P^5}(2)) - h^0(\mathcal I_{X/\P^5}(2)) = 9 - 2 = 7$. Hence $\mathcal E_e$ is an initialized ACM bundle with $h^0 (\mathcal E_e ) = 8 = (\deg X) \cdot (\rank \mathcal E_e)$, in other words, it is an Ulrich bundle of rank 2. We refer to \cite{Beauville2016:IntroductionUlrich} for an explicit construction of such curves.

\begin{prop}[{\cite[Proposition 8]{Beauville2016:IntroductionUlrich}}]\label{Prop:UlrichRank2} There exists an Ulrich bundle of rank $2$ on $X$.
\end{prop}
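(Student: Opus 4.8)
The plan is to realize the bundle as a Serre extension along an elliptic normal sextic, as anticipated in the paragraph preceding the statement. The one genuinely geometric ingredient is a smooth elliptic curve $e \subset X$ of degree $6$ embedded by a complete linear system, i.e. with $h^0(\mathcal O_e(1)) = 6$ and $h^0(\mathcal I_e(1)) = 0$; such curves exist on $X$ and are written down explicitly in Beauville \cite{Beauville2016:IntroductionUlrich}. This existence is the deeper input of the statement, and I would take it from that reference. Since $\dim X = 3$, the curve $e$ is a local complete intersection subscheme of codimension $2$, so Serre correspondence is available.

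I would apply Theorem~\ref{thm:SerreCorrespondence} with $Y = e$ and $\mathcal L = \mathcal O_X(2)$. By adjunction together with $\omega_e = \mathcal O_e$ and $\omega_X = \mathcal O_X(-2)$, one has $\det \mathcal N_{e/X} = \omega_e \otimes \omega_X^{-1}|_e = \mathcal O_e(2)$, hence $(\wedge^2 \mathcal N_{e/X} \otimes \mathcal L^*)|_e \cong \mathcal O_e$ is trivial and is generated by a single nowhere-vanishing section, so the hypothesis on generating sections is met with $r=2$; the auxiliary vanishings $H^1(\mathcal O_X(-2)) = H^2(\mathcal O_X(-2)) = 0$ hold because $X$ is ACM. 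The theorem then produces a rank-$2$ bundle $\mathcal E_e$ with
\[
0 \to \mathcal O_X \to \mathcal E_e \to \mathcal I_e(2) \to 0,
\]
so that $c_1(\mathcal E_e) = 2$ and, in rank $2$, $\mathcal E_e^* \cong \mathcal E_e(-2)$.

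To finish I would check the acyclicity criterion of condition~\ref{item: Ulrich as Acyclic condition} in Proposition~\ref{prop: Ulrich Equiv conditions}, namely $H^i(\mathcal E_e(-j)) = 0$ for all $i$ and $j \in \{1,2,3\}$. Twisting the defining sequence by $\mathcal O_X(-j)$ and using that $\mathcal O_X$ is ACM, each group is governed by $H^\bullet(\mathcal I_e(2-j))$, which I read off from $0 \to \mathcal I_e(m) \to \mathcal O_X(m) \to \mathcal O_e(m) \to 0$. For $j = 1$ (so $m = 1$) the restriction map $H^0(\mathcal O_X(1)) \to H^0(\mathcal O_e(1))$ is a map between $6$-dimensional spaces with kernel $H^0(\mathcal I_e(1)) = 0$, hence an isomorphism, which kills both $H^0$ and $H^1$ of $\mathcal I_e(1)$; combined with $H^1(\mathcal O_e(1)) = 0$ this gives $H^i(\mathcal E_e(-1)) = 0$ for all $i$. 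The case $j = 3$ then follows formally: since $\mathcal E_e(-3) \cong (\mathcal E_e(-1))^* \otimes \omega_X$, Serre duality yields $H^i(\mathcal E_e(-3)) \cong H^{3-i}(\mathcal E_e(-1))^* = 0$.

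The remaining twist $j = 2$ is the delicate one, and I expect it to be the crux of the verification. Here $H^0(\mathcal E_e(-2)) = H^1(\mathcal E_e(-2)) = 0$ again fall out of the sequence (using $H^0(\mathcal I_e) = 0$ and $H^1(\mathcal I_e) = 0$), but the vanishing of $H^2(\mathcal E_e(-2))$ does not come for free, since the relevant connecting map $H^2(\mathcal I_e) \cong H^1(\mathcal O_e) \to H^3(\mathcal O_X(-2))$ between one-dimensional spaces is not obviously injective. Rather than analyze that map, I would exploit the self-duality: because $\mathcal E_e^* \cong \mathcal E_e(-2)$ and $\omega_X = \mathcal O_X(-2)$, Serre duality forces $h^i(\mathcal E_e(-2)) = h^{3-i}(\mathcal E_e(-2))$, so the already-established vanishing of $H^0$ and $H^1$ propagates to $H^3$ and $H^2$. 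This closes condition~\ref{item: Ulrich as Acyclic condition}, and hence $\mathcal E_e$ is an Ulrich bundle of rank $2$ (with $h^0(\mathcal E_e) = 1 + h^0(\mathcal I_e(2)) = 8 = (\deg X)\,(\rank \mathcal E_e)$ as a sanity check).
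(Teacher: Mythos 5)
Your construction is correct and is the same bundle the paper has in mind: the Serre extension $0 \to \mathcal O_X \to \mathcal E_e \to \mathcal I_e(2) \to 0$ along an elliptic normal sextic $e \subset X$, with the existence of $e$ outsourced to Beauville exactly as the paper does. Where you diverge is in the verification. The paper does not re-prove that $\mathcal E_e$ is ACM: it takes that from the Arrondo--Costa classification (Theorem~\ref{thm: Arrondo-Costa ACM rk 2}, the elliptic curve type), and then only checks initialization via $h^0(\mathcal I_e(1))=0$ and counts $h^0(\mathcal E_e)=1+h^0(\mathcal I_e(2))=8$ against Definition~\ref{defi:ACMUlrich}. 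You instead verify the full acyclicity criterion of Proposition~\ref{prop: Ulrich Equiv conditions}\,(\ref{item: Ulrich as Acyclic condition}) by hand, and your handling of the two genuinely nontrivial points is right: the twist $j=2$ does not follow from the ideal-sheaf sequence alone (the connecting map $H^1(\mathcal O_e)\to H^3(\mathcal O_X(-2))$ between one-dimensional spaces is the obstruction), and your use of $\mathcal E_e^*\cong\mathcal E_e(-2)$ plus Serre duality to propagate the vanishing of $h^0,h^1$ to $h^2,h^3$ is a clean way around it; the $j=3$ case by self-duality of $\mathcal E_e(-1)$ is likewise fine. What your route buys is independence from the Arrondo--Costa theorem, so the only external input is the existence of the curve $e$; what the paper's route buys is brevity and the extra structural information (stability, and the place of $\mathcal E_e$ in the classification of rank-$2$ ACM bundles) that comes packaged with the citation.
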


\subsection{Ulrich bundles of higher ranks via Serre correspondence and \emph{Macaulay2}}

Similar as the case of cubic 3-folds in $\P^4$, the existence of rank 3 Ulrich bundles on $X$ was expected earlier in \cite[Example 4.4]{ArrondoCosta2000}. However, as Casanellas and Hartshorne pointed out \cite[Remark 5.5]{CasanellasHartshorne:Ulrich}, the construction was incorrect not only for cubic 3-folds but also for our $X$. Arrondo and Costa constructed an arithmetically Cohen-Macaulay curve $D$ of degree 15 and genus 12 using a Gorenstein liaison, however, 2 sections of $H^0 (\omega_D(-1))$ do not generate the graded module $H_{*}^0 (\omega_D)$. Indeed, in loc. cit., the authors started with a twisted cubic curve $D'$, and then found an arithmetically Gorenstein curve $B'$ of degree 18 containing $D'$ where the residual curve is $D$. Hence we have a short exact sequence
\[
0 \to \mathcal I_{B'} \to \mathcal I_{D'} \to \omega_D (-2) \to 0.
\]
Since $B'$ is arithmetically Gorenstein, we have a short exact sequence of graded $S=H_{*}^0 (\mathcal O_{\P^5})$-modules
\begin{equation}\label{Seq:aGshortexact}
0 \to H^0_* (\mathcal I_{B'}) \to H^0_* (\mathcal I_{D'}) \to H^0_* (\omega_D(-2)) \to 0.
\end{equation}
Note that $B'$ is the zero locus of a section of $\mathcal E_e(1)$, so $\mathcal I_B'$ fits into the short exact sequence $ 0 \to O_X \to \mathcal E_e (1) \to \mathcal I_{B'} (4) \to 0 $. 
Hence, the first 2 nonzero terms in the sequence (\ref{Seq:aGshortexact}) are
\[
 H^0 (\mathcal I_{D'}(1))  \simeq H^0 (\omega_D(-1))
 \] and
\[ 
H^0 (\mathcal I_{D'}(2))  \simeq H^0(\omega_D).
\]

Via the exact sequence $0 \to \mathcal I_{X/\P^5} \to \mathcal I_{D' / \P^5} \to \mathcal I_{D'} \to 0$, we may lift the sections in $H^0(\mathcal I_{D'}(j))$ as the homogeneous form of degree $j$ in $S$. It is clear that a twisted cubic curve $D' \subset \mathbb P^5$ is generated by 2 linear forms and 3 quadratic forms in $S$, and hence $H^0 (\omega_D(-1))$ is spanned by these 2 linear forms $l_1$ and $l_2$, namely.

 However,  sections in the image of $H^0(\omega_D(-1)) \otimes H^0 (\mathcal O_{\P^5} (1)) \to H^0 (\omega_D)$ can only span 11 quadrics, since two sections of $\omega_D(-1)$ admit a linear Koszul relation $l_1 l_2 - l_2 l_1 = 0$ in $S$. Hence we conclude that $H^0 (\omega_D(-1)) \otimes H^0 (\mathcal O_{\P^5}(1)) \to H^0 (\omega_D)$ cannot be  surjective.

We need to construct a curve satisfying the generating condition to construct a rank 3 Ulrich bundle $\mathcal E$ on $X$. If it exists, then two independent global sections of $\mathcal E$ will degenerate along a curve $D$ of degree $15$ since $\mathcal E$ is globally generated always. It is easy to see that the numerical conditions suggested in \cite[Example 4.4]{ArrondoCosta2000} are valid. Hence, we need to construct an ACM curve $D \subset X$  of given invariants such that $H^0(\omega_D(-1))$ has two generating section, that is, the multiplication map
\[
H^0 (\omega_D(-1)) \otimes H^0 (\mathcal O_{\P^5}(j)) \to H^0 (\omega_D(j-1))
\]
is surjective for each $j \ge 1$. 

Since $\omega_D^* (1+j)$ is nonspecial for $j \ge 2$, Castelnuovo pencil trick implies that the map is automatically surjective for $j \ge 2$. Hence it is sufficient to check only for the $j=1$ case. The construction follows from \emph{Macaulay2} \cite{Macaulay2} computations, which is analogous to \cite[Appendix]{CasanellasHartshorne:Ulrich} or \cite{Geiss:PhD}. Although the proof goes into the same strategy, in particular, the \emph{Macaulay2} scripts are almost same, it is worthwhile to write down since the difference between the cubic 3-fold case is not that much straightforward.

\begin{prop}[{See also \cite[Theorem A.3]{CasanellasHartshorne:Ulrich}}]\label{Thm:ExistenceOfACMg12d15}
The space of pairs $D \subset X \subset \P^5$ of smooth ACM curves of degree $15$ and genus $12$ on a complete intersection of $2$ quadrics $X$ has a component which dominates the Hilbert scheme of intersections of $2$ quadrics in $\P^5$. Moreover, the module $H^0_{*} (\omega_D)$ is generated by its $2$ sections in degree $-1$ as  $S_{\P^5} = H^0_{*} (\mathcal O_{\P^5})$-modules for a general pair $D \subset X$. In particular, a general intersection of two quadrics in $\P^5$ carries a desired curve we discussed above.
\end{prop}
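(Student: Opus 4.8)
The plan is to reduce everything to the construction of a single explicit pair $D_0 \subset X_0$ over a finite field, to verify the required properties by machine computation, and to propagate them to a general pair in characteristic $0$ by semicontinuity. As already observed, the surjectivity of $H^0(\omega_D(-1)) \otimes H^0(\mathcal O_{\P^5}(j)) \to H^0(\omega_D(j-1))$ is automatic for $j \ge 2$ by the Castelnuovo pencil trick, so the only genuine condition on $D$ is the case $j=1$, namely the surjectivity of the multiplication map
\[
\mu_D \colon H^0(\mathcal O_{\P^5}(1)) \otimes H^0(\omega_D(-1)) \longrightarrow H^0(\omega_D).
\]
For a smooth ACM curve of degree $15$ and genus $12$ spanning $\P^5$ one computes by Riemann--Roch that $h^0(\omega_D(-1)) = 2$ and $h^0(\omega_D) = 12$, so $\mu_D$ is a map between vector spaces of the same dimension $12$; the condition is therefore the nonvanishing of a single $12 \times 12$ determinant, a manifestly open condition. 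This is exactly where the Arrondo--Costa liaison curve fails, its two linear generators obeying the Koszul syzygy $l_1 l_2 - l_2 l_1 = 0$, so the whole point is to land in a \emph{different} component on which $\mu_D$ is an isomorphism.

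First I would produce such a curve explicitly over a finite field $\mathbb F_p$, following the random-construction technique of \cite[Appendix]{CasanellasHartshorne:Ulrich} and \cite{Geiss:PhD}: one builds a random curve $D_0$ of the prescribed numerical type in the expected Hilbert-scheme component (for instance by prescribing a general module with the expected minimal free resolution and passing to the associated curve, or by a general Serre/liaison construction with random coefficients), and one sets $X_0$ to be the intersection of the pencil of quadrics through $D_0$. One then verifies in \emph{Macaulay2} \cite{Macaulay2} that $D_0$ is smooth, irreducible, of degree $15$ and genus $12$; that it is ACM (equivalently $H^1_*(\mathcal I_{D_0/\P^5}) = 0$, equivalently $S/I_{D_0}$ has the expected Betti table); that $h^0(\mathcal I_{D_0/\P^5}(2)) = 2$, so $X_0$ is well defined and is a smooth complete intersection of two quadrics; that $h^0(\omega_{D_0}(-1)) = 2$; and finally that the $12 \times 12$ matrix representing $\mu_{D_0}$ has nonzero determinant. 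Each of these is a finite linear-algebra check over $\mathbb F_p$.

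Next I would pass to characteristic $0$. Since all the verified properties are open conditions and the construction spreads out over $\operatorname{Spec}\Z$, a successful verification at a good prime $p$ guarantees that the analogous pair exists over $\overline{\mathbb Q}$ and that the locus of pairs $D \subset X$ enjoying all of these properties is a nonempty open subset of a component of the space of pairs; this yields the ``moreover'' and ``in particular'' clauses. It then remains to prove domination of the Hilbert scheme of intersections of two quadrics, an open subset of $\mathrm{Gr}(2, H^0(\mathcal O_{\P^5}(2)))$ of dimension $38$. Here I would use the deformation theory of the forgetful map $p \colon \{D \subset X\} \to \{X\}$: given a first-order deformation $v \in H^0(N_{X_0/\P^5})$, the obstruction to lifting it to a deformation of the pair is the image of $v|_{D_0}$ under the connecting map $H^0(N_{X_0/\P^5}|_{D_0}) \to H^1(N_{D_0/X_0})$ of the normal-bundle sequence
\[
0 \to N_{D_0/X_0} \to N_{D_0/\P^5} \to N_{X_0/\P^5}|_{D_0} \to 0.
\]
Thus $p$ has surjective differential at $(D_0, X_0)$ --- and hence is dominant, the target being smooth and irreducible --- as soon as $H^1(N_{D_0/X_0}) = 0$, which is again an open condition to be certified on the explicit example. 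A Riemann--Roch count gives $\chi(N_{D_0/X_0}) = 30$ for the fibre dimension, consistent with $\dim\{D\} = \chi(N_{D_0/\P^5}) = 68 = 38 + 30$.

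The main obstacle is the construction step itself: the preceding discussion shows that the obvious curves do not satisfy the generating condition, so the difficulty is to reach a component of the Hilbert scheme on which $\mu_D$ is generically an isomorphism and to certify this by an actual example. Once a single such $(D_0, X_0)$ is in hand, the remaining ingredients --- the characteristic-$0$ lift and the domination --- are routine semicontinuity and a normal-bundle cohomology computation, all reduced to open conditions verifiable by \emph{Macaulay2}.
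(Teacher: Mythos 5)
Your proposal follows essentially the same route as the paper: certify a single randomly constructed example over a finite field in \emph{Macaulay2}, observe that all required properties (smoothness, ACM-ness, $h^0(\mathcal I_D(2))=2$, surjectivity of the $12\times 12$ multiplication map $\mu_D$, and the normal-bundle vanishings $H^1(\mathcal N_{D/X})=0$, $h^0(\mathcal N_{D/X})=30$, $h^0(\mathcal N_{D/\P^5})=68=38+30$) are open, and conclude by semicontinuity and the same dimension count for dominance of $\mathrm{Gr}(2,H^0(\mathcal O_{\P^5}(2)))$. The only piece you leave unspecified is the explicit recipe for the random curve, which the paper obtains by realizing $D$ as a curve of bidegree $(7,10)$ in $\P^1\times\P^2$ via a $\mathfrak g^1_7$ and a $\mathfrak g^2_{10}$, prescribing a random bigraded free resolution of its truncated ideal in the Cox ring, and then re-embedding by $\omega_D\otimes L_1^{-1}$ --- a route that deliberately avoids the liaison construction whose Koszul syzygy kills surjectivity of $\mu_D$.
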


\begin{proof}
We prove by constructing a family of such curves as in the following strategy. First, we take a family of smooth curves of genus $12$ in $\P^1 \times \P^2$. Next, we observe that a general (precisely, a randomly chosen) curve in this family admits an embedding to $\P^5$ in a natural way. Finally, we check that such a curve in $\P^5$ satisfies the desired properties. Then the whole statement will follow by the deformation theory and the semicontinuity.

Let $D$ be a smooth projective curve of genus $12$ together with line bundles $L_1$ and $L_2$ with $|L_1|$ a $\mathfrak g^1_7$ and $|L_2|$ a $\mathfrak g^2_{10}$. Let $D' $ be the image of the map
\[
D \xrightarrow{|L_1|, |L_2|} \P^1 \times \P^2.
\]
Suppose that the maps $H^0(\P^1 \times \P^2 , \mathcal O (n, m)) \to H^0 (D, L_1 ^n \otimes L_2^ m)$ are of maximal rank for all $n, m \ge 1$. Under this assumption, $D$ is isomorphic to its image $D'$ and we may compute the Hilbert series of the truncated ideal
\[
I_{trunc} = \bigoplus_{n, m \ge 3} H^0 (\mathcal I_{D'} (n,m))
\]
in the Cox ring $S_{\P^1 \times \P^2} = k[x_0, x_1; y_0, y_1, y_2]$ of $\P^1 \times \P^2$, namely,
\[
H_{I_{trunc}} (s, t) = \frac{5s^4 t^5 - 11s^4 t^4 - 6s^3 t^5 + 3s^4 t^3 + 10s^3 t^4}{(1-s)^2 (1-t)^3}.
\]
Hence, by reading off the Hilbert series, we may expect that $I_{trunc}$ admits a bigraded free resolution of type
\[
0 \to F_2 \to F_1 \to F_0 \to I_{trunc} \to 0
\]
with modules $F_0 = S_{\P^1 \times \P^2} (-3, -4)^{10} \oplus S_{\P^1 \times \P^2}(-4, -3)^{3}$, $F_1 = S_{\P^1 \times \P^2}(-3, -5)^6 \oplus S_{\P^1 \times \P^2}(-4, -4)^{11}$, and $F_2 = S_{\P^1 \times \P^2}(-4, -5)^5$.

We will construct a curve $D' \subset \P^1 \times \P^2$ in a converse direction. First, we take a free resolution of the above form, and then observe that the module represented by such a resolution is indeed an ideal of a curve $D'$. Let $M: F_2 \to F_1$ be a general map chosen randomly, and let $K$ be the cokernel of the dual map $M^* : F_1^* \to F_2^*$. The first terms of a minimal free resolution of $K$ are:
\[
\cdots \to G \stackrel{N'} \to F_1^* \stackrel{M^*} \to F_2^* \to K \to 0
\]
where $G$ be the module generated by syzygies of $M^*$. Composing $N'$ with a general map $F_0^* \to G$ and dualizing again, we get a map $N : F_1 \to F_0$. The following script shows that the kernel of $N^*$ is $S_{\P^1 \times \P^2}$ so that the entries of the matrix $S_{\P^1 \times \P^2} \to F_0^{*}$ generate an ideal.

\begin{verbatim}
i1 : setRandomSeed "RandomCurves";
     p=997;
     Fp=ZZ/p;
     S=Fp[x_0,x_1,y_0..y_2, Degrees=>{2:{1,0},3:{0,1}}]; -- Cox ring
     m=ideal basis({1,1},S); -- irrelevant ideal
\end{verbatim}
\begin{verbatim}
i2 : randomCurveGenus12Withg17=(S)->( 
     M:=random(S^{6:{-3,-5},11:{-4,-4}},S^{5:{-4,-5}}); -- random map M 
     N':=syz transpose M; -- syzygy matrix of the dual of M 
     N:=transpose(N'*random(source N',S^{3:{4,3},10:{3,4}}));
     ideal syz transpose N) -- the vanishing ideal of the curve
\end{verbatim}
\begin{verbatim}
i3 : ID'=saturate(randomCurveGenus12Withg17(S),m); -- ideal of D'
\end{verbatim}

Since the maximal rank assumption is an open condition, the above example provides that there is a component $\mathcal H \subset Hilb_{(7,10),12} (\P^1 \times \P^2)$ in the Hilbert scheme of curves of bidegree $(7,10)$ and genus $12$ defined by free resolutions of the above form. Also note that $D' \in \mathcal H$ admits both $\mathfrak g^1_{7}$ and $\mathfrak g^2_{10}$ induced by the natural projections.

We want to verify that a general $D \in \mathcal H'$ equipped with two natural projections acts like a general curve $D \in \mathcal M_{12}$, $L_1$, and $L_2$ in order to show that $\mathcal H'$ dominates $\mathcal M_{12}$. Recall from Brill-Noether theory that for a general curve $D$ of genus $g$, the Brill-Noether locus
\[
W^r_d(D) = \{ L \in \Pic (D) \ | \ \deg(L)=d, h^0(L) \ge r+1 \}
\]
is nonempty and smooth away from $W^{r+1}_d (D)$ of dimension $\rho$ if and only if
\[
\rho = \rho(g,r,d) = g-(r+1)(g-d+r) \ge 0.
\]
Also note that the tangent space at $L \in W^r_d (D) \setminus W^{r+1}_d (D)$ is the dual of the cokernel of Petri map
\[
H^0 (D, L) \otimes H^0(D, \omega_D \otimes L^{-1}) \to H^0(D, \omega_D).
\]
We expect that both $L_1$ and $L_2$ are smooth isolated points of dimension $\rho_1 = \rho_2 = 0$, equivalently, both Petri maps are injective. We refer to \cite[Chapter IV]{ACGH} for details on Brill-Noether theory.

Now let $\eta : D \to D'$ be a normalization of a given point $D' \in \mathcal H$, since we do not know that $D'$ is smooth yet. We check that $L_i$ are smooth points in the associated Brill-Noether loci as follows, where $L_i$ is a line bundle on $D$ obtained by pulling back natural $\mathfrak g^1_7$ and $\mathfrak g^2_{10}$ on $D'$ for $i=1, 2$.

We first check $L_2$; we take the plane model $\Gamma \subset \P^2$ of $D'$.

\begin{verbatim}
i4 : Sel=Fp[x_0,x_1,y_0..y_2,MonomialOrder=>Eliminate 2];
     R=Fp[y_0..y_2]; -- coordinate ring 
     IGammaD=sub(ideal selectInSubring(1,gens gb sub(ID',Sel)),R);
     -- ideal of the plane model 
\end{verbatim}
We observe that $\Gamma$ is a curve of desired degree and genus, and its singular locus $\Delta$ consists only of ordinary double points as follows.

\begin{verbatim}
i5 : distinctPoints=(J)->( 
     singJ:=minors(2,jacobian J)+J; 
     codim singJ==3) 
\end{verbatim}

\begin{verbatim}
i6 : IDelta=ideal jacobian IGammaD + IGammaD; -- singular locus 
     distinctPoints(IDelta) 
o6 = true
\end{verbatim}

\begin{verbatim}
i7 : delta=degree IDelta;
     d=degree IGammaD; 
     g=binomial(d-1,2)-delta;
     (d,g,delta)==(10,12,24) 
o7 = true
\end{verbatim}
We can also compute the minimal free resolution of $I_{\Delta}$:

\begin{verbatim}
i8 : IDelta=saturate IDelta; 
     betti res IDelta 

            0 1 2 
o8 = total: 1 4 3 
         0: 1 . . 
         1: . . . 
         2: . . . 
         3: . . . 
         4: . . . 
         5: . 4 . 
         6: . . 3 
\end{verbatim}
Thanks to the above Betti table, we immediately check that $\Gamma$ is irreducible since $\Delta$ is not a complete intersection (4,6). Indeed, there is no way to write a degree 10 curve $\Gamma \subset \P^2$ with $24$ nodes as a union of 2 curves. In particular, the normalization of $\Gamma$ is isomorphic to a smooth irreducible curve of genus $g=12$, and thus $D'$ is smooth since $12=g \le p_a(D') \le 12$. Hence from now on, we do not distinguish $D$ and $D'$ since they coincide.

By Riemann-Roch, we have $h^0(D, L_2)=3$ since $h^1(D, L_2) = h^0(D, \omega_D \otimes L_2^{-1}) = h^0(\P^2, \mathcal I_{\Delta}(6)) = 4$ by the adjunction formula applied to $D \subset \op{Bl}_\Delta \P^2$. 
Hence $|L_2|$ is complete and the Petri map for $L_2$ is identified with the muiltiplication
\[
H^0 (\P^2, \mathcal O_{\P^2}(1)) \otimes H^0 (\P^2, \mathcal I_{\Delta}(6)) \to H^0 ( \P^2, \mathcal I_{\Delta}(7)).
\]
Note that the map is injective since there is no linear relation among the 4 sextic generators of $I_{\Delta}$. In fact, the Petri map becomes an isomorphism, and $L_2 \in W^2_{10}(D)$ is a smooth isolated point of dimension $\rho_2 = 0$.

To check that $L_1$ is Petri generic, we first compute the embedding $D \to \P H^0 (\omega_D \otimes L_1^{-1}) = \P^5$ and its minimal free resolution by choosing sections of $H^0 (\omega_D) \simeq H^0 (\P^2, \mathcal I_{\Delta}(7))$ which vanish on a fiber of $D \to \P^1$ induced by $|L_1|$:

\begin{verbatim}
i9 : LK=(mingens IDelta)*random(source mingens IDelta, R^{12:{-7}});
     -- compute a basis 
     Pt=random(Fp^1,Fp^2); -- a random point in a line 
     L1=substitute(ID',Pt|vars R); -- fiber over the point 
     KD=LK*(syz(LK % gens L1))_{0..5};
     -- compute a basis for elements in LK vanish in L1
     T=Fp[z_0..z_5]; -- coordinate ring 
     phiKD=map(R,T,KD); -- embedding 
     ID=preimage_phiKD(IGammaD); 
     degree ID==15 and genus ID==12 
o9 = true
\end{verbatim}

\begin{verbatim}
i10 : betti(FD=res ID)

             0  1  2  3 4
o10 = total: 1 12 25 16 2
          0: 1  .  .  . .
          1: .  2  .  . .
          2: . 10 25 16 .
          3: .  .  .  . 2
\end{verbatim}

We observe that the curve $D \subset \P^5$ verifies the desired properties. Since the length of the minimal free resolution of $I_D$ equals to the codimension, $D \subset \P^5$ becomes ACM. Note that the dual complex $\Hom_{S_{\P^5}}^{\bullet} (F_D , S_{\P^5}(-6))$ gives a resolution of $\oplus_{n \in \Z} H^0 (\omega_D(n))$ where $F_D$ is the minimal free resolution of $D$. The Betti table also tells us that this module is generated by its 2 global sections in degree $-1$ and $h^0(L_1) = h^0(\omega_D(-1)) = 2$. Hence, $|L_1|$ is also complete and the Petri map for $L_1$ is identified with
\[
H^0(D, \omega_D(-1)) \otimes H^0(\P^5, \mathcal O_{\P^5}(1)) \to H^0(D, \omega_D).
\]
This map is also injective since there is no linear relation between the 2 generators in $H^0(\omega_D(-1))$. Indeed, the Petri map becomes an isomorphism, and $L_1 \in W^1_{7}(D)$ is a smooth isolated point of dimension $\rho_1 = 0$. As consequences, $\mathcal H$ dominates $Z = \mathcal W^1_{7} \times_{\mathcal M_{12}} \mathcal W^2_{10}$ and $\mathcal M_{12}$ thanks to Brill-Noether theory.

It remains to check the existence of a dominating family of desired curves in $\P^5$ over the space of intersections of two quadrics in $\P^5$. Since a random curve $D \in \mathcal H$ provides an embedding $D \subset \P^5$ given by a Petri generic line bundle $\mathcal O_D(1) := \omega_D \otimes L_1^{-1}$, the above construction provides a nonempty component $\mathcal H' \subset Hilb_{15t+1-12}(\P^5)$ together with a dominant rational map $\mathcal H' // Aut(\P^5) \to \mathcal M_{12}$. Note that choosing an intersection of 2 quadrics $X \subset \P^5$ containing $D$ is equivalent to choosing a 2-dimensional subspace of $H^0(\P^5, \mathcal I_{D/\P^5} (2))$. Consider the incidence variety
\[
V = \{ (D, X) \ | \ D \in \mathcal H' \text{ ACM and } X \in \op{Gr}(2, H^0(\P^5, \mathcal I_{D/\P^5}(2))) \text{ smooth} \}.
\]
Since the graded Betti numbers are upper semicontinuous in a flat family having the same Hilbert function, we observe that $V$ is birational to $\mathcal H'$ since $H^0 (\P^5, \mathcal I_{D/\P^5}(2))$ is spanned by 2 quadrics for a randomly chosen $D$. 

We compute the normal sheaf $\mathcal N_{D/X}$ for a random pair $(D, X) \in V$ as follows:

\begin{verbatim}
i11 : IX=ideal((mingens ID)*random(source mingens ID,T^{2:-2})); 
      ID2=saturate(ID^2+IX); 
      cNDX=image gens ID / image gens ID2; -- conormal sheaf 
      NDX=sheaf Hom(cNDX,T^1/ID); -- normal sheaf 
      HH^0 NDX(-1)==0 and HH^1 NDX(-1)==0 
o11 = true 
\end{verbatim}

\begin{verbatim}
i12 : HH^0 NDX==Fp^30 and HH^1 NDX==0 
o12 = true 
\end{verbatim}
In particular, the Hilbert scheme of $X$ is smooth of dimension $30$ at $[D \subset X]$, and $h^i (\mathcal N_{D/X}(-1))=0$ for $i=0, 1$. We do a similar computation for $\mathcal N_{D/{\P^5}}$:

\begin{verbatim}
i13 : cNDP=prune(image (gens ID)/ image gens saturate(ID^2)); 
      NDP=sheaf Hom(cNDP,T^1/ID); 
      HH^0 NDP==Fp^68 and HH^1 NDP==0 
o13 = true 
\end{verbatim}
Hence $\mathcal H' \subset Hilb_{15t+1-12}$ is smooth of expected dimension 68 at a general smooth point $[D \subset \P^5] \in \mathcal H'$.

Consider the natural projections 
\[
\xymatrix{
 & V \ar[dl]_{\pi_1} \ar[dr]^{\pi_2} & \\
\mathcal H' & & Gr(2, H^0(\P^5, \mathcal O_{\P^5}(2))).
}
\]
We observe that $V$ is irreducible of dimension 68 since the fiber of $\pi_1$ over $D$ is exactly a single point. Also note that the map $\pi_2$ is smooth of dimension $h^0(D, \mathcal N_{D/X}) = 30$ at $(D,X)$. Since $\dim \op{Gr}(2, H^0(\P^5, \mathcal O_{\P^5}(2))) =  38$, we conclude that $\pi_2$ is dominant. In particular, a general $X \in Gr(2, H^0(\mathcal O_{\P^5}(2)))$ contains a curve $D \in \mathcal H'$. By the semicontinuity, we conclude that a general $(D,X)$ also satisfies the desired properties.
\end{proof}

Existence of such a curve $D$ on $X$ provides a construction of a rank 3 Ulrich bundle on $X$ via Serre correspondence. The idea by Casanellas and Hartshorne also makes sense in our case, and consequently, we have the following theorem:
\begin{thm}[{See also \cite[Proposition 5.4 and Theorem 5.7]{CasanellasHartshorne:Ulrich}}]\label{thm:StableUlrichBundlesClassical}
Let $X \subset \P^5$ be the intersection of $2$ general quadrics in $\P^5$. Then $X$ carries an $(r^2+1)$-dimensional family of stable Ulrich bundles of rank for every $r \ge 2$.
\end{thm}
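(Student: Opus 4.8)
The plan is to follow the method of Casanellas--Hartshorne--Geiss--Schreyer: build Ulrich bundles of low rank by Serre correspondence, propagate to all ranks by extensions, and then control stability and the dimension of the family by Riemann--Roch together with a dimension count on the strictly semistable locus. For the base cases, a stable rank-$2$ Ulrich bundle $\mathcal{E}_e$ is provided by Proposition~\ref{Prop:UlrichRank2}. For rank $3$ I would feed the ACM curve $D$ of Proposition~\ref{Thm:ExistenceOfACMg12d15} into Theorem~\ref{thm:SerreCorrespondence} with $\mathcal{L} = \mathcal{O}_X(3)$: by adjunction $\wedge^2 \mathcal{N}_{D/X} \cong \omega_D(2)$, so $(\wedge^2 \mathcal{N}_{D/X} \otimes \mathcal{L}^*)|_D \cong \omega_D(-1)$, and this sheaf is globally generated by its two sections exactly by the generation statement for $H^0_*(\omega_D)$ in Proposition~\ref{Thm:ExistenceOfACMg12d15}. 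Since $H^1(\mathcal{O}_X(-3)) = H^2(\mathcal{O}_X(-3)) = 0$, this yields a unique rank-$3$ bundle $\mathcal{E}$ fitting in $0 \to \mathcal{O}_X^{2} \to \mathcal{E} \to \mathcal{I}_{D/X}(3) \to 0$. That $\mathcal{E}$ is Ulrich I would verify via criterion~\ref{item: Ulrich as Acyclic condition} of Proposition~\ref{prop: Ulrich Equiv conditions}, reducing the vanishing of $H^i(\mathcal{E}(-j))$ for $1 \le j \le 3$ to the ACM property of $D$ and the linearity of the resolution of $H^0_*(\omega_D)$ recorded in the Betti table of Proposition~\ref{Thm:ExistenceOfACMg12d15}.

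The structural input for stability in low rank is that $X$ carries \emph{no} rank-$1$ Ulrich bundle: a rank-$1$ Ulrich sheaf of slope $1$ would be $\mathcal{O}_X(1)$, yet $h^0(\mathcal{O}_X(1)) = 6 \ne 4 = \deg X$. Hence a rank-$2$ or rank-$3$ Ulrich bundle admits no saturated slope-$1$ subsheaf of smaller rank: by Proposition~\ref{prop: semistability of Ulrich}(2) such a subsheaf and its (torsion-free) quotient would both be Ulrich, forcing a rank-$1$ Ulrich factor. Combined with semistability (Proposition~\ref{prop: semistability of Ulrich}(1)), this proves that every rank-$2$ and rank-$3$ Ulrich bundle is stable. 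For existence in arbitrary rank I would use that criterion~\ref{item: Ulrich as Acyclic condition} is visibly closed under extensions; writing each $r \ge 4$ as $2 + (r-2)$ and forming iterated extensions of $\mathcal{E}_e$ with the inductively constructed bundles produces Ulrich bundles of every rank $r \ge 2$.

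To obtain \emph{stable} bundles in rank $r \ge 4$ and to pin down the dimension, I would argue by induction. Fix a stable rank-$2$ Ulrich bundle $\mathcal{E}_1$ and a stable rank-$(r-2)$ Ulrich bundle $\mathcal{E}_2$; they have distinct ranks, hence are non-isomorphic, and since $\Hom(\mathcal{E}_2, \mathcal{E}_1) = 0$ and $\Ext^3(\mathcal{E}_2, \mathcal{E}_1) = \Hom(\mathcal{E}_1, \mathcal{E}_2(-2))^* = 0$ (Proposition~\ref{prop: Stability vanishing}, as $\mu(\mathcal{E}_2(-2)) < \mu(\mathcal{E}_1)$), Riemann--Roch gives $\op{ext}^1(\mathcal{E}_2, \mathcal{E}_1) \ge -\chi(\mathcal{E}_2, \mathcal{E}_1) = 2(r-2) > 0$. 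A non-split extension $0 \to \mathcal{E}_1 \to \mathcal{E} \to \mathcal{E}_2 \to 0$ is then Ulrich, and the standard fact that a non-split extension of two non-isomorphic stable bundles of equal slope is simple yields $\End(\mathcal{E}) = \C$. Computing $\chi(\mathcal{E}, \mathcal{E}) = -r^2$ by Hirzebruch--Riemann--Roch from the Chern classes forced by the Ulrich property, and using $\Ext^3(\mathcal{E}, \mathcal{E}) = 0$ and $\Ext^2(\mathcal{E}, \mathcal{E}) = 0$, simplicity gives $\op{ext}^1(\mathcal{E}, \mathcal{E}) = 1 - \chi(\mathcal{E}, \mathcal{E}) = r^2 + 1$, so $[\mathcal{E}]$ is a smooth point of an $(r^2+1)$-dimensional moduli space of semistable Ulrich bundles.

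It remains to see that the general member of this component is stable. A strictly semistable rank-$r$ Ulrich bundle is $S$-equivalent to a direct sum of stable Ulrich bundles $\mathcal{F}_i$ of ranks $r_i \ge 2$ with $\sum_i r_i = r$; the locus they sweep out is a union of images of spaces of iterated extensions of the $\mathcal{F}_i$, and a parameter count using the inductive dimensions $r_i^2+1$ together with $\op{ext}^1(\mathcal{F}_i, \mathcal{F}_j) = r_i r_j$ bounds its dimension by $\sum_i (r_i^2+1) + \sum_{i<j} r_i r_j$, which (since $r^2 = \sum_i r_i^2 + 2\sum_{i<j} r_i r_j$ and each $r_i \ge 2$) is strictly less than $r^2+1$. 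Hence the general Ulrich bundle of rank $r$ is stable, producing the desired $(r^2+1)$-dimensional family. The main obstacle is the vanishing $\Ext^2(\mathcal{E}, \mathcal{E}) = 0$ needed to force the moduli dimension to be exactly $r^2+1$: by Serre duality it amounts to $H^1\big((\mathcal{E}^*(2) \otimes \mathcal{E})(-4)\big) = 0$, a statement about a tensor product of Ulrich bundles that is not a formal consequence of slope stability and would have to be obtained either by a direct cohomological analysis propagating the (\emph{Macaulay2}-verified) base-case vanishing through the extension sequences and semicontinuity, or via the derived-category description developed in Section~\ref{section:derived}.
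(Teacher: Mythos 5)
Your proposal follows essentially the same route as the paper: Serre correspondence applied to the degree-$15$, genus-$12$ ACM curve for rank $3$, stability in ranks $2$ and $3$ from the absence of Ulrich line bundles, and non-split extensions of stable Ulrich bundles of ranks $2$ and $r-2$ plus a dimension count on the strictly semistable locus for $r \ge 4$. The one step you flag as the main obstacle, $\Ext^2(\mathcal E,\mathcal E)=0$, is handled in the paper exactly along the first option you suggest, by noting that the cohomological computations of \cite[Proposition 5.6]{CasanellasHartshorne:Ulrich} carry over verbatim to $X$.
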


\begin{proof}
Since the strategy is almost same as in \cite{CasanellasHartshorne:Ulrich}, we only provide a shorter proof here. Note first that there is a rank 2 Ulrich bundle on any smooth complete intersection $X$ \cite{ArrondoCosta2000, Beauville2016:IntroductionUlrich}, namely, an elliptic curve type ACM bundle. Since there is no Ulrich line bundle, any rank 2 Ulrich bundle must be stable by Proposition \ref{prop: semistability of Ulrich}. Because of the same reason, if there is a rank 3 Ulrich bundle, then it is also stable.
 
Proposition \ref{Thm:ExistenceOfACMg12d15} implies that a general $X$ contains a smooth ACM curve $D$ of degree 15 and genus 12 such that $\omega_D(-1)$ has two sections which generate the graded module $H^0_{*}(\omega_D)$ as $S_{\P^5}$-modules. By Serre correspondence, those two generators define a rank 3 vector bundle $\mathcal E$ as an extension
\[
0 \to \mathcal O_{X}^{2} \to \mathcal E \to \mathcal I_{D} (3) \to 0.
\]
Since $D$ is ACM, we immediately check that $H^1 (\mathcal E(j))=0$ for every $j \in \mathbb Z$. Furthermore, we also have $H^1 (\mathcal E^{*}(j)) = H^2 (\mathcal E(-j-2)) = 0$ for every $j \in \mathbb Z$ from the dual sequence
\[
0 \to \mathcal O_X(-3) \to \mathcal E^{*} \to \mathcal O_X^{2} \to \omega_D(-1) \to 0.
\]
Hence $\mathcal E$ is an ACM bundle. Applying Riemann-Roch on $D$, we have $h^0 (\mathcal O_D(2)) = 19 = h^0 (\mathcal O_X(2))$ and thus $h^0(\mathcal E(-1)) = h^0(\mathcal I_{D}(2)) = 0$. Similarly, we have $h^0(\mathcal I_{D}(3)) = h^0(\mathcal O_X(3)) - h^0 (\mathcal O_D(3)) = 10$, and thus $h^0(\mathcal E) = 12 = (\deg X) \cdot (\rank \mathcal E)$. Indeed, $\mathcal E$ is a rank 3 Ulrich bundle. As consequences, we show the existence of Ulrich bundles on $X$ of every rank $r \ge 2$ by taking direct sums of Ulrich bundles of rank 2 and 3.

Suppose first that we have a stable Ulrich bundle $\mathcal E$ of rank $r$ for every $r \ge 2$. By Riemann-Roch, we have $\chi(\mathcal E \otimes \mathcal E^{*}) = -r^2$. Since the computations in \cite[Proposition 5.6]{CasanellasHartshorne:Ulrich} also holds for our $X$,  we have $h^2( \mathcal E \otimes \mathcal E^{*}) = h^3 ( \mathcal E \otimes \mathcal E^{*}) = 0$. Since $\mathcal E$ is simple, we conclude that $h^0 ( \mathcal E \otimes \mathcal E^{*}) = 1$ and $h^1( \mathcal{E} \otimes \mathcal E^{*}) = r^2 + 1$ as desired. Hence the moduli space of stable Ulrich bundles is smooth of expected dimension if it is nonempty.

It only remains to show the existence of stable Ulrich bundles of rank bigger than $3$. Let $r \ge 4$, $\mathcal E'$ and $\mathcal E'' \not \simeq \mathcal E'$ be stable Ulrich bundles of rank $2$ and $r-2$, respectively. By Riemann-Roch and \cite[Proposition 5.6]{CasanellasHartshorne:Ulrich}, we have $h^1( \mathcal E' \otimes \mathcal {E''} ^*) = -\chi( \mathcal E' \otimes \mathcal {E''} ^*) = 2r - 4 > 0$. Hence the space $\P \Ext_X ^1 (\mathcal E'' , \mathcal E')$ is nonempty and each element gives a nonsplit extension
\[
0 \to \mathcal E' \to \mathcal E \to \mathcal E'' \to 0
\]
where $\mathcal E$ is a simple and strictly semistable Ulrich bundle of rank $r$. Such extensions form a family of dimension
\[
\dim \{ \mathcal E' \} + \dim \{ \mathcal E'' \} + \dim \P \Ext_X^1 (\mathcal E'' , \mathcal E') = r^2-2r+5 < r^2+1.
\]
Since all the other extensions by different ranks form smaller families, we conclude that a general  Ulrich bundle of rank $r$ is stable. This completes the proof.
\end{proof}

\begin{rmk} We finish this section by a few remarks.
\begin{enumerate}
\item In fact, the proof of Proposition \ref{Thm:ExistenceOfACMg12d15} implies much stronger results. For instance, one can check that $\mathcal H$ is a unirational family which dominates the moduli space $\mathcal M_{12}$ of smooth curves of genus $12$ as in \cite[Appendix]{CasanellasHartshorne:Ulrich}.
\item Because we made a computer-based computation over a finite field, we cannot remove the assumption $X$ being general. It is also mysterious that ``how general'' $X$ should be. 
\item As we mentioned, the above approach closely follows \cite{CasanellasHartshorne:Ulrich}. In loc. cit., the authors also checked that any smooth cubic 3-fold contains an elliptic normal curve of degree 5. Similarly, any smooth complete intersection of two quadrics in $\P^5$ contains an elliptic normal curve of degree 6, as in \cite[Proposition 8]{Beauville2016:IntroductionUlrich}. It is an interesting task to construct smooth ACM curves of degree 15 and genus 12 on any smooth complete intersection of two 4-dimensional quadrics. 
\end{enumerate}
\end{rmk}

\section{Derived categorical approaches}\label{section:derived}
The notion of semiorthogonal decomposition enables us to reduce problems about Ulrich bundles on $X$ to problems about vector bundles on the associated curve $C$. Let us recall some necessary facts about the moduli space of vector bundles on curves and the derived category of coherent sheaves on $X$.

\subsection{Stable vector bundles on curves}

	Let $C$ be a smooth projective curve of genus $g$, $\mathcal U_C(r,d)$ be the moduli space of S-equivalence classes of rank $r$ semistable vector bundles of degree $d$ on $C$, and $\mathcal{SU}_C(r, L)$ be the moduli space of S-equivalence classes of rank $r$ semistable vector bundles of determinant $L$ on $C$. We use the superscript $(\text{--})^{\sf s}$ to describe the sub-moduli space parametrizing stable objects. It is well-known that $\mathcal U_C(r,d)$ and $\mathcal{SU}_C(r, L)$ are normal projective varieties (see \cite{NarasimhanRamanan:ModuliofVectBdl,Seshadri:SpaceofUnitaryVectBdls}).
	
	The lemma below is one of the well-known results for (semi-)stable bundles on curves.
	\begin{lem}
		Let $ F$ be a stable vector bundle of rank${}\geq 2$ on $C$. Then,
		\begin{enumerate}
			\item $\mu( F) \geq 2g-2$ implies $h^1( F)=0$, and
			\item $\mu( F) \geq 2g-1$ implies that $ F$ is globally generated.
		\end{enumerate}
		If the inequalities on $\mu$ are strict, then the same results are valid for $F$ semistable.
	\end{lem}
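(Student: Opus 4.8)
The plan is to treat the two statements in turn, reducing the second to the first by a twist, and reducing the first to a vanishing of global sections via Serre duality.

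For (1), I would first rewrite $h^1(F)$ using Serre duality on $C$: since $h^1(F) = h^0(F^\ast \otimes \omega_C)$, where $\omega_C$ is the canonical line bundle of degree $2g-2$, it suffices to show that $F^\ast \otimes \omega_C$ has no nonzero global section. The dual $F^\ast$ of a stable bundle is again stable, and tensoring by a line bundle preserves stability, so $F^\ast \otimes \omega_C$ is stable of slope $\mu(F^\ast \otimes \omega_C) = (2g-2) - \mu(F) \le 0$ under the hypothesis $\mu(F) \ge 2g-2$. The key observation is that a stable bundle $G$ of rank ${}\ge 2$ with $\mu(G) \le 0$ admits no nonzero section: a nonzero section is an injection $\mathcal O_C \hookrightarrow G$ (its kernel, a subsheaf of the line bundle $\mathcal O_C$, must vanish), exhibiting a rank-one subsheaf of slope $0 \ge \mu(G)$, which contradicts stability. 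Hence $h^0(F^\ast \otimes \omega_C) = 0$ and $h^1(F) = 0$.

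For (2), I would test global generation pointwise. For each closed point $p \in C$ the evaluation sequence
\[
0 \to F(-p) \to F \to F \otimes k(p) \to 0
\]
shows that the map $H^0(F) \to F \otimes k(p)$ is surjective as soon as $H^1(F(-p)) = 0$; since $F$ is globally generated precisely when this holds at every $p$, it suffices to establish that vanishing. Now $F(-p)$ is stable with $\mu(F(-p)) = \mu(F) - 1 \ge 2g-2$ whenever $\mu(F) \ge 2g-1$, so part (1) applied to $F(-p)$ gives $H^1(F(-p)) = 0$, and global generation follows.

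The semistable versions are handled identically once the inequalities are made strict: for (1) one then has $\mu(F^\ast \otimes \omega_C) < 0$, and a semistable bundle of negative slope has no section because the inclusion $\mathcal O_C \hookrightarrow G$ would produce a subsheaf of slope $0 > \mu(G)$, violating semistability; likewise $\mu(F(-p)) > 2g-2$ feeds into the semistable case of (1). The only genuinely delicate point is the boundary case $\mu(F) = 2g-2$ in (1): here $F^\ast \otimes \omega_C$ has slope exactly $0$, so semistability alone would permit a section, and one must invoke strict stability to rule out the destabilizing inclusion $\mathcal O_C \hookrightarrow F^\ast \otimes \omega_C$. This is precisely why the non-strict inequalities require stability while the strict ones suffice for semistability.
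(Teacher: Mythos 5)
Your proof is correct and follows essentially the same route as the paper: Serre duality plus the stability/slope inequality for part (1), and the evaluation sequence $0 \to F(-P) \to F \to F \otimes \kappa(P) \to 0$ combined with part (1) for part (2). The only cosmetic difference is that you read a section of $F^*\otimes\omega_C$ as a map $\mathcal O_C \to F^*\otimes\omega_C$ while the paper reads it as a map $F \to \omega_C$, and you spell out the semistable variant, which the paper leaves implicit; both are fine.
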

	\begin{proof}
		Assume that $h^1( F) \neq 0$. Then $h^0( F^* \otimes \omega_C)\neq 0$, which is imposible unless $ F = \omega_C$ since $ F$ is stable and $\deg( F^*\otimes \omega_C) \le 0$. This proves (1). If $\mu( F) \geq 2g-1$, then $H^1( F(-P))=0$ for any $P \in C$, hence $H^0( F) \to  F \otimes \kappa(P)$ is surjective. Using Nakayama's lemma, we conclude that $H^0( F) \otimes \mathcal O_C \to  F$ is surjective.
	\end{proof}
	\begin{lem}[{\cite[Exercise~2.8]{Popa:GeneralizedTheta}}]\label{lem: semistability in Popa note}
		Let $ F,\, G$ be vector bundles on $C$ such that $H^p(  F\otimes  G)=0$ for $p=0,1$. Then both $ F$ and $ G$ are semistable.
	\end{lem}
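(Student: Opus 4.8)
The plan is to reduce everything to a single numerical identity coming from Riemann--Roch, and then run a contradiction argument against the definition of semistability. First I would extract the slope constraint hidden in the hypothesis. Writing $r = \rank F$, $s = \rank G$, $d = \deg F$, $e = \deg G$, the vanishing $H^0(F \otimes G) = H^1(F \otimes G) = 0$ forces $\chi(F \otimes G) = 0$. Using $\deg(F \otimes G) = s\deg F + r\deg G$ and $\rank(F \otimes G) = rs$, Riemann--Roch on $C$ gives
\[
0 = \chi(F\otimes G) = sd + re + rs(1-g) = rs\bigl(\mu(F) + \mu(G) - (g-1)\bigr),
\]
so that $\mu(F) + \mu(G) = g - 1$. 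This identity is the engine of the whole proof.

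Next I argue by contradiction. Suppose $F$ is not semistable; then by definition there is a nonzero proper subsheaf $A \subset F$ with $\mu(A) > \mu(F)$, and since $C$ is smooth this $A$ is torsion-free, hence automatically locally free. Because $G$ is locally free, tensoring the inclusion $A \hookrightarrow F$ by $G$ remains injective, so $H^0(A \otimes G)$ injects into $H^0(F \otimes G) = 0$ and therefore vanishes. On the other hand, the same Riemann--Roch computation applied to $A \otimes G$ yields
\[
\chi(A \otimes G) = \rank(A)\,\rank(G)\bigl(\mu(A) + \mu(G) - (g-1)\bigr) = \rank(A)\,\rank(G)\bigl(\mu(A) - \mu(F)\bigr) > 0,
\]
where I used the slope identity to replace $\mu(G) - (g-1)$ by $-\mu(F)$. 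But $\chi(A \otimes G) = h^0(A \otimes G) - h^1(A \otimes G) = -h^1(A \otimes G) \le 0$, a contradiction. Hence $F$ is semistable, and since $F \otimes G \cong G \otimes F$ the identical argument with the roles of $F$ and $G$ exchanged shows $G$ is semistable as well.

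The argument is short, and the only points demanding care are bookkeeping: the additive/multiplicative behaviour of $\deg$ and $\rank$ under tensor product (so that $\chi$ factors as $\rank\cdot\rank\cdot(\text{slope}-(g-1))$), and the observation that local freeness of $G$ is exactly what legitimises the $H^0$-injection. The one genuine idea---and the step I expect to be the crux---is recognising that a destabilising subsheaf $A$ satisfies $\mu(A) + \mu(G) > g-1$, which simultaneously forces $\chi(A \otimes G) > 0$ while the hypotheses force $h^0(A \otimes G) = 0$; these two facts are irreconcilable. Notably, no positive-characteristic restriction and no ``tensor product of semistables is semistable'' input is needed, so the proof is valid over an arbitrary base field.
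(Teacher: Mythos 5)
Your proof is correct and follows essentially the same route as the paper's: Riemann--Roch turns the hypothesis into the slope identity $\mu(F\otimes G)=g-1$, a destabilising subsheaf $A\subset F$ then has $\chi(A\otimes G)>0$, and this contradicts the vanishing of $H^0(A\otimes G)\subset H^0(F\otimes G)$. The paper states the argument more tersely (working directly with $\mu(F'\otimes G)>g-1$ rather than the split identity $\mu(F)+\mu(G)=g-1$), but the content is identical.
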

	\begin{proof}
		By Riemann-Roch, $\mu(  F \otimes  G)= g-1$. Assume that there exists $0 \neq  F' \subset  F$ such that $\op{rank} F' < \op{rank} F$ and $\mu( F') > \mu( F)$. Then, $\mu( F' \otimes G) > \mu( F \otimes  G) = g-1$. This shows that $\chi( F' \otimes  G) > 0$, in particular, $h^0( F' \otimes  G) > 0$. This contradicts to $ F' \otimes G \subset  F \otimes  G$ and $h^0( F \otimes  G)=0$. It follows that $F$ is semistable, and the same argument applies to $ G$.
	\end{proof}

	Similar as in the case of line bundles, we may define the Brill-Noether locus as follows:
\[
W^{k-1}_{r,d}(C) := \{  [F] \in \mathcal U_C^{\sf s} (r,d) \ | \ h^0(C,  F) \ge k \}
\]
which is a subscheme of $\mathcal U_C^{\sf s} (r,d)$ of expected dimension $\rho^{k-1}_{r,d} = r^2(g-1) + 1 - k(k-d+r(g-1))$. The following theorem is useful in the future:

\begin{thm}[{\cite[Theorem B]{BGN1997}}] \label{Theorem:BGN}
The locus $ W^{k-1}_{r,d} (C)$ is nonempty if and only if 
\[
d>0, r \le d+(r-k)g \text{ and } (r,d,k) \neq (r,r,r).
\]
\end{thm}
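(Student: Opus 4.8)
The plan is to prove the two implications separately, following \cite{BGN1997}, treating the statement for $r \ge 2$ (the rank-one case is classical and governed by ordinary Brill--Noether theory). \emph{Necessity.} Suppose $[F] \in W^{k-1}_{r,d}(C)$, so $F$ is stable of rank $r$ and degree $d$ with $h^0(F) \ge k \ge 1$. A nonzero section yields an inclusion $\mathcal O_C \hookrightarrow F$; saturating it to a sub-line-bundle $L$ with $\deg L \ge 0$ and applying stability ($\deg L < \mu(F) = d/r$) forces $d > 0$. For the inequality $r \le d + (r-k)g$ I would bound the number of independent sections of a stable bundle of positive slope: passing to the subsheaf $E \subseteq F$ generated by a general $k$-dimensional subspace $V \subseteq H^0(F)$, which sits in $0 \to M \to V \otimes \mathcal O_C \to E \to 0$ with $M$ a subbundle of a trivial bundle (so $\deg M \le 0$ and $\deg E \ge 0$), and comparing the stability estimate $\mu(E) < \mu(F)$ with a Clifford-type bound on $\deg E$ in terms of $\rank E$ and $g$. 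Finally, the corner $(r,d,k)=(r,r,r)$ must be excluded because $r$ independent sections of a slope-$1$ stable bundle would each span a trivial sub-line-bundle (their saturations have degree $0$), so the evaluation $\mathcal O_C^{r} \to F$ becomes a nowhere-degenerate bundle map of full rank, hence an isomorphism, contradicting $\deg F = r \ne 0$; the intermediate-rank possibilities are ruled out by a short analysis of the generated subsheaf.

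\emph{Sufficiency.} For the converse I would construct, for every admissible triple, a stable bundle with at least $k$ sections, and the essential constraint is that this must work on an \emph{arbitrary} smooth curve $C$, not merely a general one. The building blocks are line bundles---where a general effective divisor of degree $d$ gives $h^0 \ge d+1-g$ and handles the rank-one contributions on any curve---together with the dual-span (Lazarsfeld--Mukai) bundles $M_L$ fitting in $0 \to M_L \to H^0(L) \otimes \mathcal O_C \to L \to 0$ for a general $L$, which are stable by Butler's theorem and realize the section-rich cases $k > r$. A general admissible triple is then reached by induction on the rank, realizing $F$ as a general extension $0 \to F_1 \to F \to F_2 \to 0$ of previously constructed stable pieces whose ranks, degrees and section numbers add correctly; left-exactness of $H^0$ gives $h^0(F) \ge k$ for free.

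\emph{Main obstacle.} The hard part will be the stability of the general bundle produced at each step, uniformly over the admissible range and on \emph{every} smooth $C$. The inequality $r \le d + (r-k)g$ is precisely what guarantees that the relevant $\Ext^1(F_2, F_1)$ is large enough to contain a stable extension, so the work is to verify---via a Lange-type estimate controlling the slopes of all sub-bundles of a general extension---that no destabilizing subsheaf survives. I expect the genuine difficulties to concentrate at the boundary $r = d + (r-k)g$, where $\Ext^1$ offers the least room, and near the excluded corner $(r,r,r)$; both can be settled by a deformation and semicontinuity argument, perturbing a strictly semistable model to a nearby stable bundle with the same number of sections.
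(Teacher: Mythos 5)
The paper does not prove this statement at all: it is quoted verbatim from \cite{BGN1997} and used as a black box (only to conclude that $W^{1}_{2,1}(C)$ and $W^{2}_{2,3}(C)$ are empty for $g=2$), so there is no internal argument to compare yours against. Judged on its own terms, your proposal has a genuine gap, and it sits exactly where you yourself located the ``main obstacle'': in both halves the decisive steps (the ``Clifford-type bound'', the ``short analysis of the generated subsheaf'', the stability of the general extension) are deferred rather than carried out. More seriously, the necessity direction as you set it up cannot close, because the statement as quoted omits the standing hypothesis of \cite{BGN1997}, namely the small-slope range $0<d\le 2r$ (together with $g\ge 2$ and $r\ge 2$). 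Without it the ``only if'' direction is simply false: if $k=d-r(g-1)$ then $d+(r-k)g-r=(g-1)\bigl(r(g+1)-d\bigr)$, which is negative as soon as $d>r(g+1)$, while a general stable bundle of such a degree has exactly $k=d-r(g-1)$ sections. Concretely, on the genus-$2$ curve of this paper every $F\in\mathcal U_C^{\sf s}(2,8)$ has $h^0(F)=6$, yet $(r,d,k)=(2,8,6)$ violates $r\le d+(r-k)g$. A Clifford-type estimate cannot produce the BGN inequality: Clifford gives only $h^0\le d/2+r$, which is weaker, and is itself a bounded-slope statement. The real source of the inequality is structural (Theorem A of \cite{BGN1997}): for $0<\mu\le 2$ every stable $F$ with $k$ independent sections sits in a ``BGN extension'' $0\to\mathcal O_C^{k}\to F\to F''\to 0$ with $h^0(F''^*)=0$ and with $k$ linearly independent extension classes in $H^1(F''^*)$, so that $k\le h^1(F''^*)=d+(r-k)(g-1)$, which is equivalent to $r\le d+(r-k)g$. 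Proving that the evaluation map has saturated image $\mathcal O_C^{k}$ of full rank $k$ is the actual content of the necessity half, and it is exactly where the slope restriction enters.

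Your sufficiency sketch is closer to the mark in spirit (dual spans for $k\ge r$, general extensions otherwise), but the relevant extensions for $k<r$ are again the BGN extensions with trivial subbundle $\mathcal O_C^{k}$ and a general stable quotient $F''$ of rank $r-k$ and degree $d$, not general extensions of two stable pieces of positive rank; the inequality then measures precisely whether $H^1(F''^*)$ has room for $k$ independent classes, and the remaining work --- showing a general such extension is stable --- is the whole theorem, not a boundary case to be handled by perturbation. Since this is a quoted result, I would not reprove it here; if you do, first restore the hypothesis $0<d\le 2r$ (harmless for the two applications in this paper, where $d=1,3$ and $r=2$).
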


\subsection{Derived categories of $X$}

Let $Q^n \subset \P^{n+1}$ be a smooth quadric hypersurface. We unify all the notations which involve spinor bundles in accordance with \cite{BondalOrlov:SODforAlgVar}. Hence, the spinor bundles on the quadric $Q^n$ give the semiorthogonal decomposition\,\cite{Kapranov:DerivedCat_Homogeneous}
\[
	\D(Q^n) = \left\{
		\begin{array}{ll}
			\bigl\langle\, \mathcal O(-n+1),\,\ldots ,\,\mathcal O,\,S \,\bigr\rangle & \text{if $n$ is odd} \\
			\bigl\langle\, \mathcal O(-n+1),\,\ldots ,\,\mathcal O,\,S^+,\,S^- \,\bigr\rangle & \text{if $n$ is even}
		\end{array}
	\right.
\]
	Especially in the case $n=4$, $S^\pm$ correspond to the universal quotient bundle and the dual of the universal subbundle under the isomorphism $Q^4 \simeq \op{Gr}(2, \C^4)$. \par
	Let $Q_0,\, Q_\infty \subset \P^5$ be two nonsingular $4$-dimensional quadrics whose intersection defines $X$. Without loss of generalities, we may assume
	\[
		Q_0 = ( x_0^2 + \ldots + x_5^2 = 0 )\quad\text{and}\quad Q_\infty = ( \lambda_0 x_0^2 + \ldots + \lambda_5 x_5^2 = 0 )
	\]
	for some $\lambda_0,\, \ldots, \lambda_5 \in \C$. We define $X := Q_0 \cap Q_\infty$ a smooth threefold of degree 4. One well known approach to $X$ is to associate the quadric pencil $\mathfrak d := \lvert Q_0 + t Q_\infty \rvert_{t \in \P^1}$ on $\P^5$. Let us assume that the pencil $\mathfrak d$ is nonsingular in the sense of \cite{Reid:PhD}, namely,  each singular quadric $Q_{\lambda_i}$\,($i=0,\ldots,5$) is isomorphic to the cone of a smooth quadric $Q^3 \subset \P^4$ over a point. Note that this condition is equivalent to saying that $\lambda_0,\ldots,\lambda_{5}$ are pairwise distinct. Also note that none of $\lambda_0,\ldots,\lambda_{5}$ is zero since $Q_{\infty}$ is smooth.

	The resolution of indeterminacy of $\varphi_\mathfrak d \colon \P^5 \dashrightarrow \P^1$ gives the relative quadric $\mathcal Q \to \P^1$. Let $\sigma \colon C \to \P^1$ be the double cover ramified over $[1:\lambda_0],\,\ldots,\,[1:\lambda_5] \in \P^1$, and let $\mathcal Q_C := \mathcal Q \times_{\P^1} C$ be the fiber product. Bondal and Orlov\,\cite{BondalOrlov:SODforAlgVar} showed that $C$ is the fine moduli space of spinor bundles on the quadrics in $\mathfrak d$, {\it i.e.} there exists a vector bundle $\mathcal S_{\mathcal Q_C}$ on $\mathcal Q_C$ such that for each $c \in C$, the restriction $\mathcal S_{\mathcal Q_C}\big\vert_{\mathcal Q \times\{c\}}$ is one of the spinor bundles on the quadric $Q_{\sigma(c)}$. When $Q_{\sigma(c)}$ is a singular quadric, then it is a cone $\mathcal C(  Q^3 )$ of a $3$-dimensional quadric over a point $v \in \mathbb P^5$. In this case $\mathcal S_{\sigma(c)}$ is the pullback of the unique spinor bundle on $Q^3$ by $\mathcal C(  Q^3) \setminus \{v\} \to Q^3$. We define the vector bundle $\mathcal S := \mathcal S_{\mathcal Q_C}\big \vert_{X \times C}$.
	 
\begin{thm}[Bondal--Orlov\,\cite{BondalOrlov:SODforAlgVar}]\label{thm: Bondal-Orlov}
		The Fourier--Mukai transform 
		\[
			\Phi_{\mathcal S} \colon \D(C) \to \D(X),\  F^\bullet \mapsto Rp_{X*}( Lp_C^* F^\bullet \Dotimes \mathcal S)
		\]
		is fully faithful, and induces a semiorthogonal decomposition
		\[
			\D(X) = \bigl\langle\, \mathcal O_X(-1),\,\mathcal O_X,\,\Phi_{\mathcal S}\bigl(\D(C)\bigr)\,\bigr\rangle.
		\]
	\end{thm}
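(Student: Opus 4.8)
The statement has two independent parts: that $\Phi_{\mathcal S}$ is fully faithful, and that its essential image, together with the pair $\langle \mathcal O_X(-1),\,\mathcal O_X\rangle$, exhausts $\D(X)$. The plan is to establish full faithfulness by the Bondal--Orlov pointwise criterion, the semiorthogonality by direct cohomology vanishing, and generation by a Hochschild homology count. For full faithfulness I would apply the criterion that $\Phi_{\mathcal S}$ is fully faithful as soon as, for all closed points $c,c' \in C$, one has $\Hom_X(\Phi_{\mathcal S}(\kappa(c)),\Phi_{\mathcal S}(\kappa(c)))=\C$, that $\Ext^i_X(\Phi_{\mathcal S}(\kappa(c)),\Phi_{\mathcal S}(\kappa(c)))=0$ for $i \notin [0,\dim C]=[0,1]$, and that $\Ext^i_X(\Phi_{\mathcal S}(\kappa(c)),\Phi_{\mathcal S}(\kappa(c')))=0$ for all $i$ when $c \neq c'$. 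Since $\Phi_{\mathcal S}(\kappa(c)) = \mathcal S\big\vert_{X\times\{c\}} =: \mathcal S_c$ is precisely the restriction to $X$ of a (rank $2$) spinor bundle on the quadric $Q_{\sigma(c)}$, the entire criterion reduces to computing $\Ext^\bullet_X(\mathcal S_c,\mathcal S_{c'})$.

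I would split these $\Ext$-computations into two cases. When $\sigma(c)=\sigma(c')$, both $\mathcal S_c$ and $\mathcal S_{c'}$ arise from the two spinor bundles $S^{\pm}$ on the \emph{same} smooth quadric $Q=Q_{\sigma(c)}\simeq Q^4$; here I would feed the known cohomology of spinor bundles on $Q^4$ through the Koszul resolution $0 \to \mathcal O_Q(-2) \to \mathcal O_Q \to \mathcal O_X \to 0$ of $X \in \lvert \mathcal O_Q(2)\rvert$ to pass from $\Ext_Q$ to $\Ext_X$. The exceptional relations $\Ext^\bullet_{Q^4}(S^\pm,S^\pm)=\C$ together with the vanishing $\Ext^\bullet_{Q^4}(S^-,S^+)=0$ coming from the decomposition $\langle\ldots,S^+,S^-\rangle$ should then yield $\Hom_X(\mathcal S_c,\mathcal S_c)=\C$, the vanishing of $\Ext^{\geq 2}_X(\mathcal S_c,\mathcal S_c)$, and full orthogonality of $S^+\!\big\vert_X$ against $S^-\!\big\vert_X$. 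When $\sigma(c)\neq\sigma(c')$ the two bundles live on different members of the pencil $\mathfrak d$, and their $\Ext$-groups on $X$ must be read off from the global family $\mathcal S$ on $X\times C$ via flat base change. Establishing $\Ext^\bullet_X(\mathcal S_c,\mathcal S_{c'})=0$ in this cross-fiber case is the delicate point, since it is exactly where one uses that $C$ is the \emph{fine} moduli space of spinor bundles on the quadrics of $\mathfrak d$, so that distinct spinor bundles become cohomologically disjoint after restriction to $X$. I expect this cross-fiber orthogonality to be the main obstacle.

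For the semiorthogonality, $\langle\mathcal O_X(-1),\mathcal O_X\rangle$ is an exceptional pair because $X$ is Fano with $H^{>0}(\mathcal O_X)=0$ and $H^\bullet(\mathcal O_X(-1))=0$, the latter following from $X$ being ACM together with Serre duality and $\omega_X=\mathcal O_X(-2)$. To place $\Phi_{\mathcal S}(\D(C))$ to the right of this pair I must verify $\Hom^\bullet_X(\mathcal S_c,\mathcal O_X(-1))=H^\bullet(X,\mathcal S_c^\vee(-1))=0$ and $\Hom^\bullet_X(\mathcal S_c,\mathcal O_X)=H^\bullet(X,\mathcal S_c^\vee)=0$ for every $c$. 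Since $\mathcal S_c^\vee$ is again a twisted spinor bundle, these reduce, via the same Koszul sequence, to the vanishing of spinor cohomology on $Q^4$ in the relevant negative twists, which is standard.

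Finally, for generation I would argue numerically. Having assembled a semiorthogonal collection $\langle\mathcal O_X(-1),\mathcal O_X,\Phi_{\mathcal S}(\D(C))\rangle$ of admissible subcategories, its orthogonal $\mathcal A$ is admissible, hence saturated, and by additivity of Hochschild homology over semiorthogonal decompositions it suffices to show $\op{HH}_\bullet(\mathcal A)=0$. I would compute $\op{HH}_\bullet(X)$ from the Hodge diamond of $X$ (Fano, Picard rank $1$, with $h^{2,1}=h^{1,2}=g(C)=2$), obtaining $\dim\op{HH}_0(X)=4$ and $\dim\op{HH}_{\pm 1}(X)=2$. Each exceptional object contributes $(\op{HH}_0,\op{HH}_{\pm 1})=(1,0,0)$, while $\Phi_{\mathcal S}(\D(C))$ contributes $\op{HH}_\bullet(C)=(2,2,2)$; these sum exactly to $\op{HH}_\bullet(X)$. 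Therefore $\op{HH}_\bullet(\mathcal A)=0$, and since a nonzero saturated subcategory of $\D(X)$ has nonvanishing $\op{HH}_0$, we conclude $\mathcal A=0$. Thus the three components generate $\D(X)$, which completes the proof.
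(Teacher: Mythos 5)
First, a point of order: the paper does not prove this statement --- it is quoted from Bondal--Orlov \cite{BondalOrlov:SODforAlgVar} and used as a black box --- so there is no internal proof to compare yours against; I can only assess your argument on its own terms. Your architecture (full faithfulness via the pointwise criterion on skyscrapers, semiorthogonality via spinor cohomology, fullness via a count) is a reasonable modern reading, and the semiorthogonality step is essentially fine. But there are two genuine gaps. The first is the one you flag yourself: the cross-fiber vanishing $\Ext^\bullet_X(\mathcal S_c,\mathcal S_{c'})=0$ for $\sigma(c)\neq\sigma(c')$. Saying that $C$ is a \emph{fine} moduli space only provides the universal family $\mathcal S$; it does not make distinct fibers $\Ext$-orthogonal after restriction to $X$ --- skyscrapers at distinct points of a moduli space are orthogonal in the derived category of the moduli space, but the corresponding universal objects on the parametrized variety need not be, and proving that they are is precisely the content of full faithfulness. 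This is the heart of the theorem and it is left unproved. Even in the same-quadric case your reduction is incomplete: the Koszul sequence $0 \to \mathcal O_Q(-2) \to \mathcal O_Q \to \mathcal O_X \to 0$ forces you to compute the twisted groups $\Ext^\bullet_{Q^4}(S^{\pm},S^{\mp}(-2))$ and $\Ext^\bullet_{Q^4}(S^{\pm},S^{\pm}(-2))$, and these are not consequences of the exceptional-collection relations $\Ext^\bullet(S^-,S^+)=0$, $\Ext^\bullet(S^{\pm},S^{\pm})=\C$; they require the actual Bott/Ottaviani cohomology tables for twisted spinor bundles, which you do not carry out.

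The second gap is the generation step. Additivity of Hochschild homology over a semiorthogonal decomposition is indeed a theorem of Kuznetsov, and your bookkeeping ($4=1+1+2$ in degree $0$, $2=0+0+2$ in degrees $\pm1$) is numerically correct. But the implication ``$\op{HH}_\bullet(\mathcal A)=0$ for admissible $\mathcal A$ implies $\mathcal A=0$'' is false in general: phantom and quasi-phantom admissible subcategories with vanishing Hochschild homology exist (for instance inside derived categories of certain surfaces of general type), and there is no general theorem excluding them from $\D(X)$ for a Fano threefold that you could cite here. Your count therefore shows only that the complement would have to be a phantom, not that it vanishes. Fullness must be established by an honest argument --- for example by showing directly that any object right-orthogonal to $\mathcal O_X(-1)$, $\mathcal O_X$, and all the $\mathcal S_c$ is zero, by deriving the decomposition from the even Clifford algebra of the quadric fibration $\mathcal Q \to \P^1$, or by mutations from a known full collection. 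As written, both halves of the theorem remain open in your proposal.
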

	Furthermore, $X$ can be regarded as the fine moduli space of stable vector bundles of rank $2$ with fixed determinant of odd degree\,\cite{Newstead:StableBundlesofRank2OddDeg}, and $\mathcal S$ is the universal bundle of this moduli problem. There arises an ambiguity of the choice of this fixed determinant\,(the theorem of Bondal and Orlov is independent of the replacement $\mathcal S \mapsto \mathcal S \otimes p_C^*  L$ for any line bundle $ L \in \Pic C$).
\begin{defi}\label{def: univ Spinor}
	We choose $\xi$ a line bundle of degree $1$, and assume that $\mathcal S$ is the universal family of the fine moduli space $\mathcal{SU}_C(2,\xi^*) \simeq X$ which parametrizes the stable vector bundles of rank $2$ and determinant $\xi^*$. Equivalently, $\mathcal S$ is determined by imposing the condition $\det \mathcal S = \mathcal O_X(1) \boxtimes \xi^*$.
\end{defi}	 
	This choice of $\mathcal S$ is precisely dual to the same symbol in Section 5 of \cite{Kuznetsov:Instanton}. We remark that some parts of the next subsection are following the arguments in \cite{Kuznetsov:Instanton}. This may cause confusions, so we rephrase the details which are necessary for the rest part of the paper.

	\subsection{Ulrich bundles via derived categories}
	Let $\op{Coh}(X)$ be the category of coherent sheaves on $X$. There is a natural functor $\op{Coh}(X) \to \D(X)$ which maps a coherent sheaf $\mathcal E$ to the complex concentrated at degree zero:
	\[
		\ldots \to 0 \to \mathcal E \to 0 \to \ldots.
	\]
	This identifies $\op{Coh}(X)$ to a full (but not triangulated) subcategory of $\D(X)$, hence we may regard a coherent sheaf on $X$ as an object in $\D(X)$. Conversely, we call an object $\mathcal E^\bullet \in \D(X)$ a coherent sheaf\,(resp. a vector bundle) if $\mathcal E^\bullet$ is isomorphic to an object\,(resp. a locally free sheaf) in $\D(X) \cap \op{Coh}(X)$.
	
	We use derived categories to classify Ulrich bundles on $X$. We first assume that there exists an Ulrich bundle $\mathcal E$ of rank $r \geq 2$ on $X$\,(the existence will be proved later). By Proposition~\ref{prop: Ulrich Equiv conditions}, $H^p(\mathcal E(-i)) = \Hom_{\D(X)}( \mathcal E^*(1), \mathcal O(-i+1)[p])=0$ for all $p$ and $i=1,2,3$.
	Using the semiorthogonal decomposition in Theorem~\ref{thm: Bondal-Orlov}, one immediately sees that $\mathcal E^*(1) \in \Phi_{\mathcal S} \D(C)$. Since $\D(C) \to \Phi_\mathcal S(\D(C))$ is an equivalence of categories, the study of Ulrich bundles on $X$ boils down to the study of certain objects in $\D(C)$. Such objects are obtained by mapping $\mathcal E^*(1)$ along the projection functor $\Phi_\mathcal S^! \colon \D(X) \to \D(C)$. Before to proceed, let us note that the projection $\Phi_\mathcal S^!$ is right adjoint to $\Phi_\mathcal S$. Since the functor $\Phi_\mathcal S$ is given by $ F \mapsto Rp_{X*}(Lp_C^* F \Dotimes \mathcal S)$ where $p_X \colon X \times C \to X$ and $p_C \colon X \times C \to C$ are the natural projections, its right adjoint has the following form\,(\textit{cf.} \cite[Proposition~5.9]{Huybrechts:FourierMukai}):
	\[
		\Phi^!_\mathcal S \colon \D(X) \to \D(C),\qquad \mathcal E \mapsto Rp_{C*}\bigl( Lp_X^* \mathcal E \Dotimes \mathcal S^*) \otimes \omega_C [1].
	\]
	Meanwhile, the Ulrich conditions in Proposition~\ref{prop: Ulrich Equiv conditions}-\ref{item: Ulrich as Acyclic condition} impose an extra condition on $\mathcal E^*(1)$ other than $\mathcal E^*(1) \in \Phi_\mathcal S \D(C)$. Indeed, the condition $H^\bullet( \mathcal E(-3))=0$ is not followed by $\mathcal E^*(1) \in \Phi_\mathcal S \D(C)$. It can be expressed as follows:
	\begin{align}
		& \Hom_{\D(X)}(\mathcal E^*(1),\, \mathcal O_X(-2)[p]) = 0 \nonumber \\
		\Leftrightarrow& \Hom_{\D(X)}(\Phi_\mathcal S \Phi_\mathcal S^!( \mathcal E^*(1)),\, \mathcal O_X(-2)[p])=0  \nonumber \\
		\Leftrightarrow& \Hom_{\D(C)}(\Phi_\mathcal S^!(\mathcal E^*(1)),\, \Phi_\mathcal S^! (\mathcal O_X(-2))[p])=0. \label{eq: Ulrich orthogonal condition}
	\end{align}
	\begin{lem}\label{lem: Projection image and Raynaud bundle}
		We have $\Phi_{\mathcal S}^! (\mathcal O_X(-2))[2] \simeq \mathcal R^* \otimes \omega_C^{\otimes 2}$, where $\mathcal R$ is the second Raynaud bundle which appears in \cite[Section 5.4]{Kuznetsov:Instanton}.
	\end{lem}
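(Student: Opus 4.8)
The plan is to substitute $\mathcal O_X(-2)$ into the explicit formula for the right adjoint $\Phi_\mathcal S^!$ and then transport the whole computation back to $C$ by relative duality, so that the answer is expressed through the pushforward of the universal family $\mathcal S$ itself. Writing $p_X,p_C$ for the two projections from $X\times C$ and setting $\mathcal G := p_X^*\mathcal O_X(-2)\otimes \mathcal S^*$ (no derived functors are needed since $\mathcal O_X(-2)$ is a line bundle), the definition reads
\[
	\Phi_\mathcal S^!(\mathcal O_X(-2)) \simeq Rp_{C*}(\mathcal G)\otimes \omega_C[1],
\]
so everything reduces to identifying the complex $Rp_{C*}(\mathcal G)$ on $C$.

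First I would analyze the fibers of $\mathcal S$. For $c\in C$ the restriction $S_c := \mathcal S\vert_{X\times\{c\}}$ is, by construction, the restriction to $X$ of a spinor bundle on the quadric $Q_{\sigma(c)}$, hence a conic-type ACM bundle $\mathcal E_\lambda$ with $\det S_c = \mathcal O_X(1)$; in particular $S_c^*\simeq S_c(-1)$. Since such a bundle is ACM we have $H^1(X,S_c)=H^2(X,S_c)=0$, while Serre duality together with $h^0(X,S_c(-3))=0$ gives $H^3(X,S_c)=0$, and the defining sequence of a conic-type bundle gives $h^0(X,S_c)=4$. Thus $S_c$ has cohomology concentrated in degree $0$ of constant dimension $4$ for every $c$; here one must also treat the six points of $C$ over the branch locus, where $Q_{\sigma(c)}$ is a cone, but the restricted spinor bundle remains an ACM bundle on $X$ and the same vanishing holds (alternatively this follows from semicontinuity once $\chi$ is seen to be constant). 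By cohomology and base change, $Rp_{C*}\mathcal S = p_{C*}\mathcal S$ is locally free of rank $4$ on $C$.

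Next I would feed this into relative Grothendieck--Serre duality for the smooth projection $p_C$, whose relative dualizing sheaf is $\omega_{X\times C/C}=p_X^*\omega_X = p_X^*\mathcal O_X(-2)$. A direct computation gives $\mathcal G^*\otimes \omega_{X\times C/C}\simeq \mathcal S$, as the twist by $\mathcal O_X(-2)$ cancels the $\mathcal O_X(2)$ coming from $\mathcal G^*$; duality combined with the vanishing of the higher $R^ip_{C*}\mathcal S$ from the previous step then yields
\[
	Rp_{C*}(\mathcal G)\simeq (p_{C*}\mathcal S)^*[-3].
\]
Inserting this into the formula for $\Phi_\mathcal S^!$ and shifting by $[2]$ produces
\[
	\Phi_\mathcal S^!(\mathcal O_X(-2))[2] \simeq (p_{C*}\mathcal S)^*\otimes \omega_C.
\]

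It then remains to match this intrinsic rank-$4$ bundle with the second Raynaud bundle of \cite[Section~5.4]{Kuznetsov:Instanton}. I expect this last identification to be the main obstacle, as it is essentially a matter of reconciling conventions: our $\mathcal S$ is normalized by $\det\mathcal S=\mathcal O_X(1)\boxtimes \xi^*$ and is \emph{dual} to the bundle of the same name in \cite{Kuznetsov:Instanton}, while the right adjoint $\Phi_\mathcal S^!$ introduces an extra twist by $\omega_C$ relative to the projection functor used there. Tracking these through should give $\mathcal R\simeq (p_{C*}\mathcal S)\otimes \omega_C$, equivalently $(p_{C*}\mathcal S)^*\otimes \omega_C \simeq \mathcal R^*\otimes \omega_C^{\otimes 2}$, which is exactly the assertion. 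The only genuinely geometric input beyond this bookkeeping is the cohomology vanishing for $S_c$ at the degenerate fibers, so I would isolate that as a separate verification.
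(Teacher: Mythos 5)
Your proposal is correct and follows essentially the same route as the paper: both arguments apply Grothendieck--Verdier duality for $p_C$ to identify $Rp_{C*}(\mathcal S^*\otimes p_X^*\mathcal O_X(-2))$ with $(p_{C*}\mathcal S)^*[-3]$ and then invoke Kuznetsov's identification $\mathcal R\simeq p_{C*}(\mathcal S)\otimes\omega_C$. The only difference is that you make explicit the fiberwise vanishing showing $Rp_{C*}\mathcal S$ is a rank-$4$ bundle concentrated in degree $0$, which the paper leaves implicit in the citation of \cite{Kuznetsov:Instanton}.
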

	\begin{proof}
		By \cite{Kuznetsov:Instanton}, $\mathcal R \simeq \Phi_{\mathcal S^*}^! \mathcal O_X[-1] = p_{C*} \bigl( \mathcal S \otimes p_X^* \mathcal O_X) \otimes \omega_C$. Thus,
		\begin{align*}
			\mathcal R^* &\simeq \varHom_{\D(C)}( p_{C*} \mathcal S \otimes \omega_C,\ \mathcal O_C ) \\
			&\simeq p_{C*} \varHom_{\D(X \times C)} ( \mathcal S \otimes p_C^* \omega_C,\ p_X^*\omega_X [3] )\\
			&\simeq p_{C*} \bigl( \mathcal S^* \otimes p_X^*\mathcal O_X(-2) \bigr) \otimes \omega_C^* [3] \\
			&\simeq \Phi_{\mathcal S}^!( \mathcal O_X(-2)) \otimes \omega_C^{\otimes(-2)}[2],
		\end{align*}
		where the second isomorphism is given by Grothendieck-Verdier duality.
	\end{proof}
	Together with the orthogonality condition (\ref{eq: Ulrich orthogonal condition}), we have to understand how the object $\Phi_\mathcal S^!( \mathcal E^*(1))$ looks like. One standard way is to analyze the restriction to the point $\Phi_\mathcal S^! \mathcal E^*(1) \otimes \kappa(c) \in \D(\{c\})$. We fix the notations to avoid confusion as follows.
	\begin{nota}
		For $x \in X$, we denote by $\mathcal S_x$ the vector bundle over $C$ determined by the restriction of $\mathcal S$ to $\{x\} \times C \simeq C$. Similarly, the vector bundle $\mathcal S_c$\,($c \in C$) over $X$ is defined to be the restriction of $\mathcal S$ to $X \times\{c\} \simeq X$.
	\end{nota}
	The proof of the following proposition is essentially due to \cite[Theorem~5.10]{Kuznetsov:Instanton}, but we write down the proof to prevent the confusions arising from the choice of a convention.
	\begin{prop}\label{prop:UlrichViaDerivedCategory}
		Suppose there exists an Ulrich bundle $\mathcal E$ of rank $r$ on $X$. Then, $ F := \Phi_\mathcal S^!(\mathcal E^*(1)) \in \D(C)$ is a semistable vector bundle over $C$ of rank $r$ and degree $2r$. Furthermore, $ F$ satisfies
		\begin{enumerate}
			\item $\Ext^p_C( \mathcal R,\,  F^* \otimes \omega_C^{\otimes 2} ) = 0$ for $p=0,1$ and
			\item $H^1( F \otimes \mathcal S_x)=0$ for each $x \in X$.
		\end{enumerate}
		Conversely, if $ F$ is a semistable vector bundle over $C$ of rank $r$ and degree $2r$ satisfying the conditions (1) and (2) above, then $\Phi_\mathcal S  F = \mathcal E^*(1)$ for some Ulrich bundle $\mathcal E$ over $X$.
	\end{prop}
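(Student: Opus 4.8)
The plan is to prove the two directions of the correspondence separately, reducing everything to base change for the Fourier--Mukai kernel $\mathcal S$ together with the Ulrich vanishing recorded in Proposition~\ref{prop: Ulrich Equiv conditions}. Throughout I would exploit that $\mathcal E$ Ulrich is equivalent to $\mathcal E^*(2)$ Ulrich, so that $H^\bullet(\mathcal E^*(1)) = H^\bullet(\mathcal E^*) = H^\bullet(\mathcal E^*(-1)) = 0$.

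For the forward direction, the discussion preceding the statement already shows $\mathcal E^*(1) \in \Phi_\mathcal S\D(C)$; since $\Phi_\mathcal S$ is fully faithful (Theorem~\ref{thm: Bondal-Orlov}) the counit $\Phi_\mathcal S\Phi_\mathcal S^!(\mathcal E^*(1)) \to \mathcal E^*(1)$ is an isomorphism, so $\Phi_\mathcal S F \simeq \mathcal E^*(1)$ for $F = \Phi_\mathcal S^!(\mathcal E^*(1))$. To see that $F$ is a genuine vector bundle I would restrict to closed points $c \in C$: by base change for the right adjoint, $Li_c^*F$ is computed by $R\Gamma(X, \mathcal E^*(1) \otimes \mathcal S_c^*)$ up to a twist by the one-dimensional $\omega_C \otimes \kappa(c)$ and a shift. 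Rewriting $\mathcal S_c^* \simeq \mathcal S_c(-1)$ using $\det\mathcal S_c = \mathcal O_X(1)$ (Definition~\ref{def: univ Spinor}) and resolving $\mathcal S_c$ by the spinor short exact sequences on the ambient quadric restricted to $X$, the vanishings above force this complex to be concentrated in a single degree; hence $F$ is a sheaf, locally free since its fibres have constant dimension.

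I would then read off rank and degree from Euler characteristics: the fibrewise computation gives $\rank F = h^1(X, \mathcal E^*(1)\otimes\mathcal S_c^*) = r$ by Hirzebruch--Riemann--Roch (using $\mu(\mathcal E)=1$ and the known Chern data of the conic-type bundle $\mathcal S_c$). Condition (2) is then immediate, since base change over $x \in X$ identifies $R\Gamma(C, F\otimes\mathcal S_x)$ with the fibre at $x$ of the locally free sheaf $\Phi_\mathcal S F = \mathcal E^*(1)$, which lives in degree $0$ and has dimension $r$; thus $H^1(F\otimes\mathcal S_x)=0$ and $h^0(F\otimes\mathcal S_x)=r$. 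Feeding $h^1=0$ into $\chi(F\otimes\mathcal S_x) = 2\deg F - 3r$ (here $g=2$, $\deg\mathcal S_x=-1$) and equating with $h^0=r$ yields $\deg F = 2r$. For condition (1) I would combine Lemma~\ref{lem: Projection image and Raynaud bundle}, giving $\Phi_\mathcal S^!(\mathcal O_X(-2)) \simeq \mathcal R^*\otimes\omega_C^{\otimes 2}[-2]$, with the orthogonality (\ref{eq: Ulrich orthogonal condition}) forced by $H^\bullet(\mathcal E(-3))=0$; this reads $\Ext^\bullet_C(F, \mathcal R^*\otimes\omega_C^{\otimes 2})=0$, which is exactly $H^\bullet(\mathcal R^*\otimes F^*\otimes\omega_C^{\otimes 2})=0$, i.e. condition (1).

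The delicate point, and the step I expect to be the main obstacle, is semistability of $F$. I would argue by contradiction from a destabilizing quotient $F \twoheadrightarrow F''$, semistable with $\mu(F'') < \mu(F)=2$. Right-exactness of tensoring and $H^1(F\otimes\mathcal S_x)=0$ give $H^1(F''\otimes\mathcal S_x)=0$ for every $x$, while Riemann--Roch gives $\chi(F''\otimes\mathcal S_x)=\rank(F'')(2\mu(F'')-3)$, which is negative as soon as $\mu(F'')<3/2$, forcing $H^1(F''\otimes\mathcal S_x)\neq 0$ and a contradiction. The borderline window $3/2 \le \mu(F'') < 2$, where $\chi\ge 0$, is precisely where the naive count fails; here I would use that the $\mathcal S_x$ sweep out all of $\mathcal{SU}_C(2,\xi^*)$ and invoke a generalized theta-divisor argument, with Brill--Noether nonemptiness (Theorem~\ref{Theorem:BGN}) and Lemma~\ref{lem: semistability in Popa note}, to produce some $x$ with $H^1(F''\otimes\mathcal S_x)\neq 0$, exactly as in Kuznetsov's Theorem~5.10. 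For the converse I would run the steps backwards: given semistable $F$ of rank $r$ and degree $2r$ satisfying (1) and (2), set $\mathcal G := \Phi_\mathcal S F$, so that the semiorthogonal decomposition gives $\Hom(\mathcal G, \mathcal O_X(-1)[p]) = \Hom(\mathcal G, \mathcal O_X[p]) = 0$, i.e. $H^\bullet(\mathcal E(-1)) = H^\bullet(\mathcal E(-2)) = 0$ after writing $\mathcal G = \mathcal E^*(1)$; condition (1), read through Lemma~\ref{lem: Projection image and Raynaud bundle} as (\ref{eq: Ulrich orthogonal condition}), supplies $H^\bullet(\mathcal E(-3))=0$; and condition (2) together with $\deg F = 2r$ makes $Li_x^*\mathcal G = R\Gamma(C, F\otimes\mathcal S_x)$ concentrated in degree $0$ of constant dimension $r$, so by cohomology and base change $\mathcal G$ is a vector bundle of rank $r$. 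Setting $\mathcal E := \mathcal G^*(1)$, the three vanishings are precisely Proposition~\ref{prop: Ulrich Equiv conditions}-(\ref{item: Ulrich as Acyclic condition}), so $\mathcal E$ is Ulrich with $\Phi_\mathcal S F = \mathcal E^*(1)$.
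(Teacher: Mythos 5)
Most of your argument coincides with the paper's: the pointwise analysis of $\Phi_\mathcal S^!(\mathcal E^*(1))$ via the spinor (Ottaviani) sequences, the identification of condition (1) with the orthogonality (\ref{eq: Ulrich orthogonal condition}) through Lemma~\ref{lem: Projection image and Raynaud bundle}, the base-change derivation of condition (2), and the converse are all essentially the paper's proof. Your computation of $\op{rank} F$ and $\deg F$ by fibrewise Euler characteristics ($h^0(F\otimes\mathcal S_x)=r$, $h^1=0$, $\chi(F\otimes\mathcal S_x)=2\deg F-3r$) is a legitimate shortcut replacing the paper's Grothendieck--Riemann--Roch computation (\ref{eq: G-R-R for FM Transform}), and it gives the correct answer.

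The genuine problem is the semistability of $F$, which you correctly identify as the delicate step but then do not close. Your destabilizing-quotient argument only works when $\mu(F'')<3/2$, where $\chi(F''\otimes\mathcal S_x)<0$ forces $h^1\neq 0$; in the window $3/2\leq\mu(F'')<2$ you appeal to an unspecified ``generalized theta-divisor argument'' to produce some $x\in X$ with $H^1(F''\otimes\mathcal S_x)\neq 0$. That is a nonemptiness statement for a theta-type locus inside $\mathcal{SU}_C(2,\xi^*)\simeq X$, and such loci can be empty for special bundles (the Raynaud bundle $\mathcal R$ itself is the standard counterexample to naive theta-divisor existence), so this step is not a routine citation and as written the proof has a hole. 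The paper sidesteps all of this: condition (1), which you have already established at that point, says exactly that $H^p\bigl((\mathcal R^*\otimes\omega_C^{\otimes 2})\otimes F^*\bigr)=0$ for $p=0,1$, and Lemma~\ref{lem: semistability in Popa note} then yields semistability of $F^*$ (hence of $F$) in one line. You list that lemma among your tools for the borderline case but never notice that it applies directly to the orthogonality with $\mathcal R$, with no case division and no auxiliary family of bundles needed.
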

	\begin{proof}
		Let $c \in C$ be a point. Then $ F \otimes \kappa(c) \in \D(\{c\})$ is the complex of $\C$-vector spaces whose cohomology sheaves are controlled by
		\begin{equation}\label{eq: FM projection of E^*(1) at a point}
			H^{p + 1} ( X,\ \mathcal E^*(1) \otimes \mathcal S_c^*) \simeq \Ext^{p+1}_X(\mathcal E(-1),\, \mathcal S_c^*).
		\end{equation}
		By \cite[p.~310]{Ottaviani:Spinor}, $\mu(\mathcal S_c^*) = -1/2$, regardless whether $c$ is a ramification point or not. Hence $\mu(\mathcal E(-1))=0$, Proposition~\ref{prop: Stability vanishing}, and Serre duality imply that
		\begin{align*}
			\Ext_X^0(\mathcal E(-1),\,\mathcal S_c^*) &\simeq \Hom_X(\mathcal E(-1),\, \mathcal S_c^*) = 0.
		\end{align*}
		Consider the following short exact sequence (\textit{cf.} \cite[Theorem~2.8]{Ottaviani:Spinor})
		\begin{equation}\label{eq: Ottaviani Seq on X}
			0 \to \mathcal S_{\tau c}^* \to \mathcal O_X^{\oplus4} \to \mathcal S_{c}^*(1) \to 0
		\end{equation}
		where $\tau \colon C \to C$ is the hyperelliptic involution arising from the double cover $C \to \P^1$. Note that even for the ramification points $c \in C$, one can compose the sequence (\ref{eq: Ottaviani Seq on X}) in a natural way. Tensoring (\ref{eq: Ottaviani Seq on X}) with $\mathcal E^*(j)$ for $j=-1,0,1$, we have $H^{p+1}(\mathcal E^*(1) \otimes \mathcal S_c^*) \simeq H^{p+2}( \mathcal E^* \otimes \mathcal S_{\tau c}^* ) \simeq H^{p+3}(\mathcal E^*(-1) \otimes \mathcal S_c^*)$ and the latter one vanishes for $p \geq 1$. This proves that (\ref{eq: FM projection of E^*(1) at a point}) is zero unless $p = 0$, in other words, $ F$ is a coherent sheaf concentrated at degree $0$. Furthermore, since $p_X^*(\mathcal E^*(1)) \otimes \mathcal S$ is flat over $C$, $c \mapsto \chi( \mathcal E^*(1) \otimes \mathcal S^*_c)$ is a constant function and thus $F$ is a vector bundle on $C$.
		\par
		To compute $\op{rank} F$ and $\deg  F$, we use Grothendieck-Riemann-Roch which reads
		\begin{align}
			\op{ch} (\Phi_\mathcal S F) &= \op{ch} ( Rp_{X*}( p_C^*  F \otimes \mathcal S) ) = p_{X*}\bigl( \op{ch}( p_C^* F) \op{ch}(\mathcal S) \op{td}(\mathcal T_{p_X})\bigr) \nonumber\\
			&= (2d - 3s) + \frac13 (2s-d) P_X - sL_X + (d-2s)H_X, \label{eq: G-R-R for FM Transform}
		\end{align}
		where $d = \deg  F$ and $s = \op{rank} F$. The computation method is identical to the one introduced in \cite[Lemma~5.2]{Kuznetsov:Instanton} except that the Fourier-Mukai kernels are dual to each other.
		
		Since $\Phi_\mathcal S  F = \mathcal E^*(1)$ is of rank $r$ and of degree zero, we find $2d-3s = r$ and $d-2s = 0$. It follows that $s = r$ and $d = 2r$. By (\ref{eq: Ulrich orthogonal condition}) and Lemma~\ref{lem: Projection image and Raynaud bundle}, 
		\begin{align*}
			\Hom_{\D(C)}(\Phi_S^!(\mathcal E^*(1)),\, \Phi_\mathcal S^! (\mathcal O_X(-2))[p]) 
			&\simeq \Hom_{\D(C)}( F ,\, \mathcal R^* \otimes \omega_C^{\otimes 2}[p-2]) \\
			&\simeq \Ext^{p-2}_C( \mathcal R ,\,  F^* \otimes \omega_C^{\otimes 2}).
		\end{align*}
		Since both $\mathcal R$ and $ F$ are vector bundles, it suffices to require $\Ext^p_C(\mathcal R,\,  F^* \otimes \omega_C^{\otimes 2})=0$ for $p=0,1$ to fulfill (\ref{eq: Ulrich orthogonal condition}). The semistability of $ F$ follows from Lemma~\ref{lem: semistability in Popa note}. Finally, $H^1( F \otimes \mathcal S_x)=0$ follows from the fact that $\Phi_\mathcal S  F = \mathcal E^*(1)$ is a vector bundle on $X$; indeed, $\Phi_\mathcal S  F = Rp_{X*}( p_C^*  F \otimes \mathcal S)$ is the complex concentrated at zero, hence $R^1p_{X*}( p_C^* F \otimes \mathcal S)=0$. By the cohomology base change, $H^1( F \otimes \mathcal S_x)=0$ for each $x \in X$.
		
		Conversely, assume that $ F$ is a semistable vector bundle on $C$ satisfying all the prescribed conditions. The condition (2) implies that $\Phi_\mathcal S  F \in \D(X)$ is a vector bundle on $X$. Then $\Phi_\mathcal S  F \in \Phi_\mathcal S \D(C)$ together with (1) can be interpreted as $\Ext_X^p( \Phi_\mathcal S  F,\, \mathcal O_X(-j))=0$ for $j=0,1,2$, showing that $\mathcal E:= (\Phi_\mathcal S  F)^* \otimes \mathcal O_X(1)$ is an Ulrich bundle over $X$.
	\end{proof}
	Using (\ref{eq: G-R-R for FM Transform}) and $\Phi_\mathcal S  F = \mathcal E^*(1)$, we can immediately check that
	\[
		(\,c_i(\mathcal E)\,)_i = (\ 1,\, r,\, 2r^2-r,\, \tfrac{1}{3}r(r-2)(2r+1)\ ).
	\]
	Proposition~\ref{prop:UlrichViaDerivedCategory} gives a bijection between the set of Ulrich bundles on $X$ with the set of certain semistable vector bundles on $C$. From now on, we bring our focus into the semistable vector bundles on $C$ satisfying the conditions described in Proposition~\ref{prop:UlrichViaDerivedCategory}. First of all, we prove that a general stable bundle in $\mathcal U_C(r,2r)\,(r \geq 2)$ satisfies the condition (2) of Proposition~\ref{prop:UlrichViaDerivedCategory}.

	\begin{prop}\label{prop: generic FM image is locally free}
		For $r \geq 2$, let $\mathcal U_C^{\sf s}(r,2r)$ be the moduli space of stable vector bundles on $C$ of rank $r$ and degree $2r$. The subset 
		\[
			\bigl\{ [ F] \in \mathcal U_C^{\sf s}(r,2r) : h^1( F \otimes \mathcal S_x)=0\ \text{for every }x \in X\bigr\}
		\]
		is open and nonempty.
	\end{prop}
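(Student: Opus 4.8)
The plan is to treat openness and nonemptiness separately, the first being routine and the second carrying the real content. For a single $x$ the condition $h^1(F\otimes\mathcal S_x)=0$ is open on $M:=\mathcal U_C^{\sf s}(r,2r)$ by semicontinuity, but ``for every $x$'' is an intersection of open conditions, so I would exploit the properness of $X$. Working on $X\times M$, choose a (quasi-)universal family $\mathcal F$ on $C\times M$ (an \'etale-local one suffices, since $h^1(F\otimes\mathcal S_x)$ depends only on the isomorphism classes and is unaffected by the twisting ambiguity of $\mathcal F$). Pulling $\mathcal S$ and $\mathcal F$ back to $C\times X\times M$, tensoring, and pushing forward along $C\times X\times M\to X\times M$, semicontinuity shows that $Z:=\{(x,[F]) : h^1(F\otimes\mathcal S_x)\neq 0\}$ is closed in $X\times M$. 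The set in the statement is precisely $M\setminus p_M(Z)$, where $p_M\colon X\times M\to M$ is the projection; since $X$ is projective, $p_M$ is proper, hence $p_M(Z)$ is closed and the set is open.

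For nonemptiness, the previous paragraph shows that it is equivalent to $p_M(Z)\neq M$, that is, to the general fibre $Z^{[F]}=\{x\in X : h^1(F\otimes\mathcal S_x)\neq 0\}$ being empty. Recall that $\mathcal S_x$ is stable of rank $2$ and determinant $\xi^*$, so $F\otimes\mathcal S_x$ has rank $2r$ and degree $3r$; Riemann--Roch on the genus-$2$ curve $C$ then gives $\chi(F\otimes\mathcal S_x)=r$. Hence $h^1(F\otimes\mathcal S_x)\neq 0$ is equivalent to the Brill--Noether condition $h^0(F\otimes\mathcal S_x)\geq r+1$, a degeneracy locus of expected codimension $(r+1)\bigl((r+1)-r\bigr)=r+1$.

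Fixing $x$, the fibre $Z_x=\{[F]\in M : h^1(F\otimes\mathcal S_x)\neq 0\}$ is the non-surjectivity locus of a map $\phi\colon\mathcal A\to\mathcal B$ of bundles on $M$ representing $Rp_{M*}$ of the relevant twist, with $\op{rank}\mathcal A-\op{rank}\mathcal B=\chi=r$; thus its expected codimension in $M$ is $r+1$. Granting $\op{codim}_M Z_x\geq r+1$ for all $x$, fibring $Z$ over $X$ yields $\dim Z\leq \dim X+(\dim M-(r+1))=r^2-r+3$, which is strictly less than $r^2+1=\dim M$ as soon as $r\geq 3$; then $p_M(Z)\subsetneq M$ and the set is nonempty. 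For $r=2$ the count is exactly borderline ($r^2-r+3=\dim M$), so I would instead exhibit a point: by Proposition~\ref{Prop:UlrichRank2} every smooth $X$ carries a rank-$2$ Ulrich bundle $\mathcal E$, and by Proposition~\ref{prop:UlrichViaDerivedCategory} the bundle $F:=\Phi_\mathcal S^!(\mathcal E^*(1))$ is semistable of rank $2$ and degree $4$ with $h^1(F\otimes\mathcal S_x)=0$ for all $x$. If $F$ is stable this is the desired point; otherwise, since the vanishing condition is open and the stable locus is dense in $\mathcal U_C(2,4)$, a general small deformation of $F$ is stable and still satisfies the vanishing.

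The crux, and the step I expect to be the main obstacle, is the uniform estimate $\op{codim}_M Z_x\geq r+1$, i.e.\ that the twisted Brill--Noether loci $\{[F] : h^0(F\otimes\mathcal S_x)\geq r+1\}$ are never excess-dimensional: a determinantal locus always has codimension \emph{at most} the expected one, and the reverse inequality requires $\phi$ to be sufficiently general, whereas here $\mathcal S_x$ ranges over a fixed $3$-dimensional family of special bundles rather than a generic map. I expect to settle this by a tangent-space computation at a point of $Z_x$, identifying the obstruction with the injectivity of a Petri-type multiplication map and invoking the Brill--Noether dimension bounds recorded in Theorem~\ref{Theorem:BGN}; failing a uniform argument, a single explicit good pair $(x,[F])$ produced by a \emph{Macaulay2} computation in the spirit of Section~\ref{section:Serre}, combined with the openness proved above, already yields nonemptiness.
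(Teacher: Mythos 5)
Your openness argument and your treatment of $r=2$ coincide with the paper's: the paper likewise forms the closed locus $Z\subset X\times\mathcal U_C^{\sf s}(r,2r)$ and uses properness of the second projection, and for $r=2$ it likewise takes the projection image $\Phi_\mathcal S^!(\mathcal E^*(1))$ of the rank-$2$ Ulrich bundle from Proposition~\ref{Prop:UlrichRank2}. The genuine gap is in the case $r\geq 3$. Everything you write there reduces the proposition to the uniform estimate $\op{codim}_M Z_x\geq r+1$, and you correctly observe that the ``expected codimension'' of a determinantal locus is only an upper bound for its actual codimension --- but you then leave the required lower bound unproved, offering only the hope of a Petri-type tangent-space computation or a computer search. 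That estimate \emph{is} the content of the proposition for $r\geq 3$, so as written the proof is incomplete. Your fallback of exhibiting ``a single explicit good pair $(x,[F])$'' also does not suffice: the condition quantifies over all $x\in X$, so one good pair only removes one fibre of $Z\to X$, not a whole fibre of $Z\to M$.

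The paper closes exactly this gap by an extension count rather than a determinantal-genericity argument. By Serre duality $H^1(F\otimes\mathcal S_x)\simeq\Hom_C(F,\,\mathcal S_x^*\otimes\omega_C)^*$; since $F$ is stable of slope $2$ and $\mathcal S_x^*\otimes\omega_C$ is stable of rank $2$ and slope $5/2$, a slope comparison on the image forces any nonzero map $F\to\mathcal S_x^*\otimes\omega_C$ to be surjective. Hence every $[F]\in Z_x$ sits in an extension $0\to F'\to F\to\mathcal S_x^*\otimes\omega_C\to 0$ with $F'$ semistable of rank $r-2$ and degree $2r-5$, and counting $(r-2)^2+1$ parameters for $F'$ plus $\dim\P\Ext^1_C(\mathcal S_x^*\otimes\omega_C,F')=3r-5$ for the extension gives $\dim Z_x\leq r^2-r$, i.e.\ precisely the codimension bound $\geq r+1$ that you need; your dimension count over $X$ then goes through verbatim. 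If you want to keep your determinantal framing, substitute this extension argument for the missing Petri-map step.
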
	
	\begin{proof}
		First of all, we claim that the set $\{[ F] \in \mathcal U_C^{\sf s}(r,2r) : h^1(F \otimes \mathcal S_x)=0\ \text{for every }x \in X\}$ is open in $\mathcal U_C^{\sf s}(r,2r)$. Consider the closed subset $Z \subset X \times \mathcal U_C^{\sf s}(r,2r)$ defined by
		\[
			\{ (x,[ F]) : h^1( F \otimes \mathcal S_x) \geq 1 \}.
		\]
		Since the projection morphism $\op{pr}_2 \colon X \times \mathcal U_C^{\sf s}(r,2r) \to \mathcal U_C^{\sf s}(r,2r)$ is proper, $V := \mathcal U_C^{\sf s}(r,2r) \setminus \op{pr}_2(Z)$ is open in $\mathcal U_C^{\sf s}(r,2r)$. Writing down the locus $V$ set-theoretically, we can easily find that
		\[
			V = \{ [ F] \in \mathcal U_C^{\sf s}(r,2r) : h^1( F \otimes \mathcal S_x)=0\ \text{for every }x \in X\}. 
		\]

		For $r=2$, we know that any smooth $X$ carries an Ulrich bundle $\mathcal E$ of rank 2 as in Proposition~\ref{Prop:UlrichRank2}. Note that its projection image $ F := \Phi_{\mathcal S}^{!} (\mathcal E^* (1))$ is a rank 2 vector bundle of degree $4$ on $C$ satisfying the desired property. Assume that $r \ge 3$. Let $ F$ be a stable vector bundle of rank $r$ and degree $2r$, and let $x \in X$. Suppose that $H^1( F \otimes \mathcal S_x) \simeq \Hom_C ( F, \mathcal S_x ^* \otimes \omega_C)^*$ is nonzero. By the stability condition, any nonzero morphism $ F \to \mathcal S_x ^* \otimes \omega_C$ must be surjective, so we have a short exact seqeunce
		\[
			0 \to  F' \to  F \to \mathcal S_x^* \otimes \omega_C \to 0
		\]
		where $ F'$ is a semistable vector bundle of rank $(r-2)$ and degree $(2r-5)$. By Riemann-Roch, we have $ext^1_C (\mathcal S_x^* \otimes \omega_C,  F') = 3r-4$. Hence, for each $x \in X$, the locus of vector bundles $ F $ fit into the above exact sequence has dimension at most $(r-2)^2 + 1 + (3r-5) = r^2 - r$. As varying $x \in X$, the bad locus can sweep out a set of dimension at most $r^2 - r + 3 < r^2 + 1$. Hence we conclude that a general $ F \in \mathcal U_C^{\sf s}(r, 2r)$ does not admit a surjection to $S_x^* \otimes \omega_C$ for any $x \in X$.		
	\end{proof}
\begin{rmk}
The formula (\ref{eq: G-R-R for FM Transform}) tells us that there is no line bundle $ F$ of degree 2 such that $\Phi_{\mathcal S} F$ is locally free. Indeed, there is no line bundle $\mathcal E$ on $X$ such that $\op{ch} (\mathcal E) = 1 - L_X$. In particular, there is no Ulrich line bundle on $X$.
\end{rmk}

	Our aim is to find a semistable vector bundle $ F$ of rank $r$ and degree $2r$ such that $\Ext^p_C( \mathcal R,\,  F^* \otimes \omega_C^{\otimes 2})=0$ for $p=0,1$. Since $ G :=  F^* \otimes \omega_C^{\otimes 2}$ is also a semistable vector bundle of rank $r$ and degree $2r$, the following proposition guarantees the existence of Ulrich bundles at least when $r=3$:
	\begin{prop}\label{prop: Generic orthogonality r=3}
		$\Hom_C(\mathcal R,  G) = 0$ for a generic stable vector bundle $G$ of rank $3$ and degree $6$.
	\end{prop}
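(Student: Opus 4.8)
The plan is to fix the Raynaud bundle $\mathcal R$ and show that the ``bad locus''
\[
	\mathcal B := \{\, [G] \in \mathcal U_C^{\sf s}(3,6) : \Hom_C(\mathcal R, G) \neq 0 \,\}
\]
is a proper closed subvariety of the irreducible variety $\mathcal U_C^{\sf s}(3,6)$. First I would record the numerical input. By Lemma~\ref{lem: Projection image and Raynaud bundle} together with the Fourier--Mukai description in \cite{Kuznetsov:Instanton}, $\mathcal R$ is a \emph{stable} vector bundle of rank $4$ and degree $4$, so $\mu(\mathcal R)=1$. Riemann--Roch on the genus-$2$ curve $C$ then gives $\chi(\mathcal R, G)=\rank(\mathcal R)\deg(G)-\rank(G)\deg(\mathcal R)-\rank(\mathcal R)\rank(G)=0$ for every $G$ of rank $3$ and degree $6$, so $\Hom_C(\mathcal R, G)=0$ is equivalent to $\Ext^1_C(\mathcal R, G)=0$, i.e.\ exactly condition (1) of Proposition~\ref{prop:UlrichViaDerivedCategory} for $r=3$. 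Since $[G]\mapsto \dim\Hom_C(\mathcal R, G)=h^0(\mathcal R^*\otimes G)$ is upper semicontinuous, $\mathcal B$ is closed, and it suffices to prove $\dim\mathcal B<\dim\mathcal U_C^{\sf s}(3,6)=r^2(g-1)+1=10$.

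To bound $\dim\mathcal B$ I would analyze a single nonzero $\phi\colon \mathcal R\to G$ through its image $I:=\op{im}\phi$, which is at once a quotient of $\mathcal R$ and a torsion-free (hence locally free) subsheaf of $G$, and stratify $\mathcal B$ by $k:=\rank I\in\{1,2,3\}$. The slope constraints do almost all of the work: as a proper quotient of the stable bundle $\mathcal R$ the image has $\mu(I)>\mu(\mathcal R)=1$, while its saturation $\bar I$ is a proper subbundle of the stable bundle $G$ and so has $\mu(\bar I)<\mu(G)=2$. For $k=1$ this would demand a line bundle of integer slope strictly between $1$ and $2$, which is impossible; thus the rank-$1$ stratum is \emph{empty}. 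For $k=2$ the same inequalities force $\deg I=\deg\bar I=3$, so $I$ is a rank-$2$, degree-$3$ subbundle of $G$ that is a quotient of $\mathcal R$. For $k=3$ one has $\bar I=G$, hence $I\subseteq G$ with $G/I$ torsion of length $6-\deg I$ and $\deg I\in\{4,5,6\}$.

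The remaining two strata I would bound by Quot-scheme and extension dimension counts. For $k=2$: the rank-$2$, degree-$3$ quotients $I$ of $\mathcal R$ form a family of dimension at most that of $\op{Quot}^{(2,3)}(\mathcal R)$, whose expected dimension $\chi(\ker,I)$ vanishes; recovering $G$ from $0\to I\to G\to Q\to 0$ with $Q\in\Pic^3(C)$ and an extension class in $\P\Ext^1_C(Q,I)$, and computing $\ext^1_C(Q,I)=5$, bounds this stratum by $0+2+4=6<10$. For $k=3$, write $I=\mathcal R/\Lambda$ with $\Lambda\subseteq\mathcal R$ a saturated line subbundle of degree $4-\deg I$; a saturated degree-$\delta$ line subbundle is a point of $\Pic^\delta(C)$ together with a class in $\P H^0(\mathcal R\otimes\Lambda^{-1})$, and $G$ is then an up-modification of $I$ of length $6-\deg I$, a family of dimension at most $3(6-\deg I)$. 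Evaluating $h^0(\mathcal R\otimes\Lambda^{-1})$ (using the vanishing $h^1=0$ for stable bundles of slope $\ge 2g-2$ from the first lemma of this section when $\deg I\in\{5,6\}$) yields stratum dimensions at most $8$, $8$, $9$ for $\deg I=4,5,6$. As every stratum has dimension $<10$, we get $\dim\mathcal B<10$ and the proposition follows by genericity.

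The hard part is twofold. First, the entire count rests on the \emph{stability} of $\mathcal R$: were $\mathcal R$ only semistable, the rank-$1$ stratum would reopen, and there the Raynaud pathology bites, since $\Hom_C(\mathcal R, L)\neq 0$ for \emph{every} line bundle $L$ of degree $2$ (equivalently $h^0(\mathcal R\otimes N)>0$ for all $N\in\Pic^0(C)$, by Serre duality). I would therefore need to extract stability of the second Raynaud bundle from \cite{Kuznetsov:Instanton}, and to remember that in the stratum $k=3$, $\deg I=4$ this same pathology forces a two-dimensional family of degree-$0$ line subbundles of $\mathcal R$, which is why that stratum sits at dimension $8$ rather than lower. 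Second, the Quot-scheme and maximal-subbundle estimates must be made rigorous rather than merely ``expected'': the semistability of the auxiliary sheaves $I$, $Q$ needs to be controlled so that the $\Hom$-terms in the Euler characteristics truly vanish, and the tightest case, $k=3$ with $\deg I=6$, i.e.\ $G=\mathcal R/\Lambda$ for a saturated degree-$(-2)$ line subbundle $\Lambda$, has a family of dimension exactly $9$, leaving no slack against the bound $10$.
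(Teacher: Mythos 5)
Your stratification is the same one the paper uses: a nonzero $\phi \colon \mathcal R \to  G$ is analyzed through its image, stability of $\mathcal R$ and of $ G$ pins the image down to rank $2$ degree $3$ or rank $3$ degree $4,5,6$, and each stratum of the bad locus is bounded by a Quot-scheme count plus an extension/modification count against $\dim \mathcal U_C^{\sf s}(3,6)=10$. Your $k=3$ counts ($8,8,9$) agree with the paper's ($9,8,9$; the small discrepancy is only because the paper parametrizes up-modifications by $\dim\{ T\}+\dim\P\Ext^1_C( T, I)$ rather than by $\op{Quot}^{0,\ell}( I^*)$ as you do, and your count is the sharper one), and the auxiliary inputs you identify as essential --- stability of $\mathcal R$ and $\dim\Hom_C(\Lambda,\mathcal R)=1$ for $\Lambda\in\Pic^0(C)$ --- are exactly the ones the paper imports from \cite{Hein:Raynaud} and \cite[Lemma~5.9]{Kuznetsov:Instanton}.

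The one genuine gap is in the rank-$2$ stratum. You bound the family of rank-$2$ degree-$3$ quotients of $\mathcal R$ by the \emph{expected} dimension $\chi( G'', G')=0$ of $\op{Quot}^{(2,3)}(\mathcal R)$ and then feed $0$ into the count $0+2+4=6$. Expected dimension is a lower bound for the local dimension of a Quot scheme, not an upper bound; the correct upper bound is the tangent-space dimension $\hom_C( G'', G')$, and controlling that is where the real work sits. The paper does it by taking a nonzero section of the stable image $ G'$ (which exists since $\chi( G')=1$), producing $0\to\mathcal O_C(D)\to  G'\to  M\to 0$ with $\deg D\in\{0,1\}$ by stability, and then invoking the emptiness of the Brill--Noether loci $W^{1}_{2,1}(C)$ and $W^{2}_{2,3}(C)$ (Theorem~\ref{Theorem:BGN}) to conclude $\hom_C( G'', G')\le 3$. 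With that bound the stratum still has dimension at most $3+2+4=9<10$, so your architecture closes, but as written the step is unjustified and the Brill--Noether input of \cite{BGN1997} is the missing idea --- you flag the need to "make the estimate rigorous," but the fix is not a routine tightening; it is the only place in the proof where this external result is needed.
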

	\begin{proof}
		Suppose that there is a nontrivial morphism $\mathcal R \to G.$ Note that $\mathcal R$ is a stable vector bundle\,\cite[Corollary~6.2]{Hein:Raynaud}. By the stability condition, we observe that the image of $\mathcal R \to G$ is either a rank 2 vector bundle of degree 3, or a rank 3 vector bundle of degree 4, 5, 6. We show by cases that these conditions are not generic. 

\begin{enumerate}
\item Suppose that the image of $\mathcal R \to  G$ is a rank 2 vector bundle of degree 3. There are two short exact sequences 
\[0 \to  G'' \to \mathcal R \to  G' \to 0 \]
  and
  \[ 0 \to  G' \to  G \to  L \to 0 \]
   where $ G'$ is the image of $\mathcal R$, $ G''$ is a rank 2 vector bundle of degree 1. Note that both $ G'$ and $ G''$ are stable. Also, $ L$ is locally free: indeed, if $\bar{ G}'$ is the kernel of the morphism $ G \to  L / \op{Tors} L$, then the stability argument forces that $ G' = \bar{ G}'$, hence $ L =  L / \op{Tors} L$ showing that $ L$ is locally free. Since $h^0(C,  G')>0$, a nonzero section $s \in H^0(C,  G')$ defines the following exact sequence
   \[
   	0 \to \mathcal O_C(D) \stackrel{s} \to  G' \to  M \to 0 
   \]
where $D$ is the zero locus $V(s)$ of $s$ and $ M = \det  G' \otimes \mathcal O_C(-D)$ is a line bundle. By the stability, we have either $\deg D = 0$ or $1$. Tensoring by $ G''^*$, we have
\[
	0 \to  G''^* (D) \to  G' \otimes  G''^* \to  G''^* \otimes  M \to 0.
\]

When $\deg D=0$, that is, $D=0$, the stability of $ G''$ assures that
\begin{align*}
	\dim \Hom_C ( G'',  G') & \le h^0(C,  G''^* ) + h^0 (C,\,  G''^*  \otimes  M) \\
	& = 0 + 3 = 3.
\end{align*}
When $\deg D=1$, 
\begin{align*}
	\dim \Hom_C ( G'',  G') & \le h^0(C,  G''^* (D) ) + h^0 (C,\,  G''^* \otimes  M) \\
	& =  1 + 2 = 3
\end{align*}
since both the Brill-Noether loci $W^{1}_{2,1} (C)$ and $W^{2}_{2,3}(C)$ are empty by Theorem \ref{Theorem:BGN}. In any cases, we observe that the Quot scheme $[\mathcal R \to  G'] \in \op{\it Quot}_{2,3} (\mathcal R)$ has the local dimension at most 3 for any stable quotient $ G' \in \mathcal U_C^{\sf s}(2,3)$. The locus of vector bundles $ G \in \mathcal U_C^{\sf s}(3, 6)$ which is an extension of $ L$ by $ G'$ has the dimension at most
\begin{align*}
\dim \{  G \} & \le \dim  \op{\it Quot}_{2,3} (\mathcal R) + \dim  \Pic^3 (C) + \dim \P\Ext^1_C ( L,  G) \\
& \le 3 + 2 + 4  = 9\\
& < 10 = \dim  \mathcal U_C^{\sf s} (3,6).
\end{align*}

\item Suppose that the image of $\mathcal R$ is a rank 3 vector bundle of degree 4. We have  two short exact sequences $$ 0 \to  L \to \mathcal R \to  G' \to 0 $$ and $$ 0 \to  G' \to  G \to  T \to 0 $$ where $ L$ is a line bundle of degree 0, $ G'$ is the image of $\mathcal R$, and $ T$ is a torsion sheaf of length 2. Since $\dim \Hom_C( L,\mathcal R) = 1$\,({\textit{cf.}} the proof of \cite[Lemma~5.9]{Kuznetsov:Instanton}), the dimension of the family of stable vector bundles $ G' \in \mathcal U_C(3,4)$ which fit into the first short exact sequence is at most $\dim \Pic^0 (C) = 2$. Hence the dimension of the family of stable vector bundles $ G$ which fit into the second short exact sequence is at most $\dim \{  T \} + \dim \{  G' \} + \dim \P \Ext^1_C ( T,  G') = 9$.

\item Suppose that the image of $\mathcal R$ is a rank 3 vector bundle of degree 5. We have  two short exact sequences $$ 0 \to  L \to  \mathcal R \to  G' \to 0 $$ and $$ 0 \to  G' \to  G \to  T \to 0 $$ where $ L$ is a line bundle of degree $-1$, $ G'$ is the image of $\mathcal R$, and $ T$ is a torsion sheaf of length 1. Since $\mathcal R$ is stable, we have $\dim \Ext^1_C ( L,\mathcal R)= \dim \Hom_C(\mathcal R,\, L \otimes \omega_C)=0$. By Riemann-Roch, we have $\dim \Hom_C( L, \mathcal R)=4$, and thus the dimension of the family of stable vector bundles $ G' \in \mathcal U_C(3,5)$ which fit into the first exact sequence is at most $\dim \Pic^{-1}(C) + \dim \P \Hom_C ( L, \mathcal R) = 5$. Therefore, the dimension of the family of stable vector bundles $ G$ which fit into the second exact sequence is at most $\dim \{  T \} + \dim \{  G' \} + \dim \P \Ext^1_C(  T,  G') = 8$.

\item Suppose that the image of $\mathcal R$ is a rank 3 vector bundle of degree 6, in other words, it coincides with $ G$. We have the following short exact sequence $$ 0 \to  L \to \mathcal R \to  G \to 0 $$ where $ L$ is a line bundle of degree $-2$. By the stability and Riemann-Roch formula, we have $\dim \Hom_C( L, \mathcal R)=\chi( L,\mathcal R)=8$. Hence the dimension of the family of stable vector bundles $G$ which fits into the above exact sequence is at most $\dim \Pic^{-2}(C) + \dim \P \Hom_C( L, \mathcal R) = 9$.

\end{enumerate}
To sum up, we conclude that a generic stable vector bundle $ G \in \mathcal U_C(3,6)$ yields $\Hom_C(\mathcal R,  G) = 0$.\qedhere
\end{proof}
	\begin{cor}\label{cor:Orthogonality}
		For each $r \geq 2$, a generic stable vector bundle $ G \in \mathcal U_C(r, 2r)$ satisfies 
		\[
			\Ext^p_C( \mathcal R,  G) = 0,\ p=0,1.
		\]
	\end{cor}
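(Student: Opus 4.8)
The plan is to reduce the vanishing of both $\Ext^0$ and $\Ext^1$ to the single vanishing $\Hom_C(\mathcal R, G)=0$, and then to establish the latter for a generic $G$ by induction on $r$ together with a semicontinuity argument. First I would record the numerical invariants of $\mathcal R$. Reading off the short exact sequences in the proof of Proposition~\ref{prop: Generic orthogonality r=3} (for instance $0 \to L \to \mathcal R \to G \to 0$ with $\deg L = -2$ and $G$ of rank $3$, degree $6$), one sees that $\mathcal R$ has rank $4$ and degree $4$, so $\mu(\mathcal R)=1$. Consequently, for any $G$ of rank $r$ and degree $2r$, Riemann--Roch on the genus-$2$ curve $C$ gives
\[
\chi(\mathcal R, G) = \deg(\mathcal R^* \otimes G) + \rank(\mathcal R^* \otimes G)(1-g) = 4r - 4r = 0.
\]
Hence $\dim\Hom_C(\mathcal R, G) = \dim\Ext^1_C(\mathcal R, G)$ for every such $G$, and it suffices to prove that a generic stable $G \in \mathcal U_C^{\sf s}(r,2r)$ satisfies $\Hom_C(\mathcal R, G)=0$; the vanishing of $\Ext^1$ then comes for free.

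Next I would set up two base cases and an inductive step for the condition $\Hom_C(\mathcal R, G)=0$. For $r=2$, the rank-$2$ Ulrich bundle of Proposition~\ref{Prop:UlrichRank2} produces, via condition (1) of Proposition~\ref{prop:UlrichViaDerivedCategory}, a semistable $G = F^* \otimes \omega_C^{\otimes 2} \in \mathcal U_C(2,4)$ with $\Ext^p_C(\mathcal R, G)=0$; for $r=3$ this is exactly Proposition~\ref{prop: Generic orthogonality r=3}. For $r\ge 4$, writing $r = 2 + (r-2)$ and using strong induction, I may choose stable $G_1 \in \mathcal U_C^{\sf s}(2,4)$ and $G_2 \in \mathcal U_C^{\sf s}(r-2, 2(r-2))$, each with $\Hom_C(\mathcal R, G_i)=0$. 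The polystable bundle $G_1 \oplus G_2$ then lies in $\mathcal U_C(r,2r)$ and satisfies $\Hom_C(\mathcal R, G_1 \oplus G_2) = \Hom_C(\mathcal R, G_1) \oplus \Hom_C(\mathcal R, G_2) = 0$, exhibiting at least one point of the moduli space at which the $\Hom$-vanishing holds.

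Finally I would upgrade this single (possibly strictly semistable) point to a generic stable bundle. Working over a smooth irreducible parameter space $T$ carrying a flat family $\mathcal G$ of semistable bundles of rank $r$ and degree $2r$ whose classifying map $T \to \mathcal U_C(r,2r)$ is dominant and whose image contains $[G_1\oplus G_2]$, upper semicontinuity of $t \mapsto h^0(C, \mathcal R^* \otimes \mathcal G_t)$ shows that $\Hom_C(\mathcal R, \mathcal G_t)=0$ on an open neighbourhood of the fibre $G_1\oplus G_2$. Since the stable locus is dense and the classifying map is dominant, this neighbourhood meets the preimage of $\mathcal U_C^{\sf s}(r,2r)$, producing a stable $G$ with $\Hom_C(\mathcal R,G)=0$. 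As $\mathcal U_C^{\sf s}(r,2r)$ is irreducible and the vanishing locus is open, it is therefore dense, which proves the claim for generic stable $G$.

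The main obstacle is precisely this last step: the bundle furnished by the induction is only polystable, so some care is needed, in the absence of a universal family over the coarse space, to pass from the boundary point $[G_1 \oplus G_2]$ to an honest stable bundle; the semicontinuity argument over the dominating family $T$ is what bridges this gap. Alternatively, one could avoid it entirely by running the dimension count of Proposition~\ref{prop: Generic orthogonality r=3} in every rank, bounding the dimension of the locus $\{[G] : \Hom_C(\mathcal R,G)\neq 0\}$ by analysing the image of a nonzero map $\mathcal R \to G$ through the stability of $\mathcal R$ and of $G$, at the cost of a lengthy rank-by-rank case analysis.
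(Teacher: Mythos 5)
Your proposal is correct and follows essentially the same route as the paper: establish the base cases $r=2$ (via the rank-$2$ Ulrich bundle and Proposition~\ref{prop:UlrichViaDerivedCategory}) and $r=3$ (Proposition~\ref{prop: Generic orthogonality r=3}), produce a polystable direct sum in higher rank, and conclude by semicontinuity and openness. The only differences are cosmetic: your reduction of both $\Ext^0$ and $\Ext^1$ to the single $\Hom$-vanishing via $\chi(\mathcal R^*\otimes G)=0$, and your use of $r=2+(r-2)$ induction in place of the paper's sums $2a+3b=r$, together with a more explicit treatment of the semicontinuity over a dominating family.
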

	\begin{proof}
		Assume that $ G_i \in \mathcal U_C(r_i,2r_i)$\,($i=1,2$) are stable vector bundles satisfying $\Ext^p_C(\mathcal R,  G_i) = 0$. Then $ G_3:=  G_1 \oplus  G_2$ is a semistable vector bundle satisfying $\Ext^p_C(\mathcal R,  G_3)=0$. By the semicontinuity, we see that $\Ext^p_C(\mathcal R,  G)=0$ for a general $ G \in \mathcal U_C(r_1+r_2,\,2(r_1+r_2))$. By \cite[Proposition~9]{Beauville2016:IntroductionUlrich}, Proposition~\ref{prop:UlrichViaDerivedCategory}, and Proposition~\ref{prop: Generic orthogonality r=3}, there are  vector bundles $ G_1 \in \mathcal U_C(2,4)$ and $ G_2 \in \mathcal U_C(3,6)$ such that $\Ext^p_C(\mathcal R,  G_i)=0$. Since direct sums of $ G_1$ and $ G_2$ can produce all the ranks${}\geq 4$, we get the desired result.
	\end{proof}

Recall that the projection image $ F = \Phi_{ \mathcal S}^{!} (\mathcal E^* (1))$ is always a semistable vector bundle. It is easy to see that both the stability and the strict semistability are preserved by this Fourier-Mukai projection.

\begin{prop}\label{prop: Stability comparison}
Let $\mathcal E$ be an Ulrich vector bundle of rank $r \geq 2$, and let $ F := \Phi_\mathcal S^! \mathcal E^*(1)$ be a semistable vector bundle on $C$. If $\mathcal E$ is stable (resp. strictly semistable), then so is $ F$.
\end{prop}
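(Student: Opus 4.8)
The plan is to compare the two stabilities by transporting equal-slope destabilizing subobjects across the equivalence $\Phi_\mathcal S\colon \D(C)\xrightarrow{\sim}\Phi_\mathcal S\D(C)$ and its inverse $\Phi_\mathcal S^!$. Both $\mathcal E^*(1)=\Phi_\mathcal S F$ and $F$ are already semistable, of slopes $0$ and $2$, so (semi)stability is detected by proper subobjects of the same slope. The Grothendieck--Riemann--Roch formula (\ref{eq: G-R-R for FM Transform}) shows that $\Phi_\mathcal S$ sends a slope-$\mu$ bundle on $C$ to a (virtual) slope-$\tfrac{\mu-2}{2\mu-3}$ object on $X$; this assignment is strictly increasing in $\mu$ and carries $\mu=2$ to $0$, so an equal-slope subobject on one side can only come from an equal-slope subobject on the other. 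It therefore suffices to match proper slope-preserving sub/quotient sequences of $F$ with those of $\mathcal E^*(1)$.

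For the implication ``$\mathcal E$ strictly semistable $\Rightarrow$ $F$ strictly semistable'' I would work on $X$. A proper slope-$0$ subbundle $\mathcal G\subset\mathcal E^*(1)$ with torsion-free quotient becomes, after twisting by $\mathcal O_X(1)$ so that $\mathcal E^*(2)$ is Ulrich, a slope-$1$ Ulrich subbundle of $\mathcal E^*(2)$; by Proposition~\ref{prop: semistability of Ulrich} both $\mathcal G(1)$ and the quotient are Ulrich. Untwisting, $\mathcal G$ and $\mathcal E^*(1)/\mathcal G$ are of the form $\mathcal U^*(1)$ with $\mathcal U$ Ulrich, hence lie in $\Phi_\mathcal S\D(C)$ with locally free $\Phi_\mathcal S^!$-images by Proposition~\ref{prop:UlrichViaDerivedCategory}. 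Applying $\Phi_\mathcal S^!$ to $0\to\mathcal G\to\mathcal E^*(1)\to\mathcal E^*(1)/\mathcal G\to0$ then yields a triangle of bundles on $C$, which collapses to a short exact sequence of slope-$2$ bundles and exhibits the strict semistability of $F$.

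The implication ``$\mathcal E$ stable $\Rightarrow$ $F$ stable'' is the delicate one, and I would argue by contradiction using a \emph{quotient} of $F$. If $F$ were strictly semistable, choose a proper slope-$2$ quotient $q\colon F\twoheadrightarrow F'$. Tensoring the defining sequence by $\mathcal S_x$ and using $H^1(F\otimes\mathcal S_x)=0$ shows that $F'$ still satisfies condition~(2) of Proposition~\ref{prop:UlrichViaDerivedCategory}, so $\Phi_\mathcal S F'$ is an honest sheaf of slope $0$ and rank $<r$. The nonzero sheaf morphism $\Phi_\mathcal S(q)\colon\mathcal E^*(1)\to\Phi_\mathcal S F'$ then has image which is at once a proper torsion-free quotient of the $\mu$-stable $\mathcal E^*(1)$, forcing slope $>0$, and a subsheaf of the $\mu$-semistable $\Phi_\mathcal S F'$, forcing slope $\le 0$; this contradiction shows $F$ is stable.

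The hard part is precisely the assertion that $\Phi_\mathcal S F'$ is $\mu$-semistable, i.e.\ that the quotient $F'$ also satisfies the orthogonality condition~(1), $\Ext^{0,1}_C(\mathcal R,(F')^*\otimes\omega_C^{\otimes2})=0$; this is exactly what makes Proposition~\ref{prop:UlrichViaDerivedCategory} identify $\Phi_\mathcal S F'$ as an Ulrich dual. It looks hopeless at first, since one computes $\mu(\mathcal R)=1<2$, so Proposition~\ref{prop: Stability vanishing} gives nothing directly. The device I would use is that $W:=(F')^*\otimes\omega_C^{\otimes2}$ is a slope-$2$ \emph{sub}bundle of $V:=F^*\otimes\omega_C^{\otimes2}$, and $V$ already satisfies $\Ext^{0,1}_C(\mathcal R,V)=0$ (condition~(1) for $F$). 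Left-exactness of $\Hom_C(\mathcal R,-)$ gives $\Hom_C(\mathcal R,W)\hookrightarrow\Hom_C(\mathcal R,V)=0$; a direct count gives $\chi_C(W,\mathcal R\otimes\omega_C)=0$, so $\op{hom}_C(W,\mathcal R\otimes\omega_C)=\op{ext}^1_C(W,\mathcal R\otimes\omega_C)$, and Serre duality identifies the latter with $\op{hom}_C(\mathcal R,W)=0$ and identifies $\Ext^1_C(\mathcal R,W)$ with $\Hom_C(W,\mathcal R\otimes\omega_C)^{*}=0$. Thus every slope-$2$ quotient of $F$ automatically satisfies conditions~(1) and~(2), $\Phi_\mathcal S F'$ is an Ulrich dual of strictly smaller rank, and the contradiction above closes the argument.
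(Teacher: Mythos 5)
Your proof is correct and follows essentially the same route as the paper: transport the destabilizing sequence of Ulrich bundles through $\Phi_\mathcal S^!$ for one direction, and for the other show that a slope-$2$ quotient of $F$ still satisfies the hypotheses of Proposition~\ref{prop:UlrichViaDerivedCategory}, hence comes from an Ulrich bundle, contradicting the stability of $\mathcal E^*(1)$. The only substantive difference is that you spell out, via the Euler-characteristic count $\chi(W^*\otimes\mathcal R\otimes\omega_C)=0$ and Serre duality, why the orthogonality condition $\Ext^{0,1}_C(\mathcal R,(F')^*\otimes\omega_C^{\otimes 2})=0$ is inherited by quotients --- a step the paper's proof asserts without justification --- and this verification is valid.
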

\begin{proof}
	First assume that $\mathcal E$ is strictly semistable. There is a destabilizing sequence
	\[
	0 \to \mathcal E' \to \mathcal E \to \mathcal E'' \to 0
	\]
	where $\mathcal E'$ and $\mathcal E''$ are Ulrich bundle of smaller ranks by Proposition~\ref{prop: semistability of Ulrich}. This gives the following short exact sequence 
	\[
	0 \to  F'' := \Phi_\mathcal S^! \mathcal E''^*(1) \to  F = \Phi_\mathcal S^! \mathcal E^*(1) \to  F' := \Phi_\mathcal S^! \mathcal E'^*(1) \to 0.
	\]
	Since $\mathcal E''$ is Ulrich, we see that $ F'' \subset  F$ is a vector bundle of slope 2 on $C$, so $ F$ cannot be stable.
	
	Now assume that $\mathcal E$ is stable, but $F$ is strictly semistable. Consider the destabilizing sequence
	\[
		0 \to  F'' \to  F \to  F' \to 0.
	\]
	Since $ F$ comes from an Ulrich bundle, the conditions in Proposition~\ref{prop:UlrichViaDerivedCategory} ensures that $h^1( F' \otimes \mathcal S_x)=0$ and $\Ext^p_C(\mathcal R,\,  F'^* \otimes \omega_C^{\otimes 2}) =0$. It follows that $\mathcal E'$ is Ulrich where $\mathcal E'^*(1) := \Phi_\mathcal S( F')$. The existence of the nonzero map $\mathcal E^*(1) \to \mathcal E'^*(1)$ leads to a contradiction; indeed, $\mathcal E^*(1)$ is stable of $\mu=0$ and $\mathcal E'^*(1)$ is semistable of $\mu=0$, thus there is no nonzero map from $\mathcal E^*(1)$ to $\mathcal E'^*(1)$.
\end{proof}

	To sum up the above discussions, we have the following theorem.
	\begin{thm}\label{thm: Main thm}
		Let $\mathcal M(r)$\,($r \geq 2$) be the moduli space of S-equivalence classes of Ulrich bundles of rank $r$ over $X$. The projection functor $\Phi_\mathcal S^! \colon \D(X) \to \D(C)$ induces the morphism 
		\[
			\varphi \colon \mathcal M(r) \to \mathcal U_C(r,2r),\ [\mathcal E] \mapsto \varphi(\mathcal E):=[\Phi_\mathcal S^!( \mathcal E^*(1))]
		\]
		of moduli spaces. Moreover, $\varphi$ satisfies the following properties:
		\begin{enumerate}
			\item set-theoretically, $\varphi$ is an injection;
			\item $\varphi$ maps stable(resp. semistable) objects to stable(resp. semistable) objects;
			\item let $\mathcal M^{\sf s}(r)$ be the stable locus. Then $\varphi$ induces an isomorphism of $\mathcal M^{\sf s}(r)$ onto
			\[
				\varphi( \mathcal M^{\sf s}(r)) = \left\{  [ F] \in \mathcal U_C^{\sf s}(r,2r)  : 
				\begin{array}{ll}
					\Ext^p_C(\mathcal R,\,  F^* \otimes \omega_C^{\otimes 2})=0,\ p=0,1,\\
					h^1( F \otimes \mathcal S_x)=0\ \text{for each }x \in X.
				\end{array}
				\right\},
			\]
			which is a nonempty open subscheme of $\mathcal U_C(r,2r)$.
		\end{enumerate}
	\end{thm}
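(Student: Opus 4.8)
The plan is to realise both $\varphi$ and its inverse as relative Fourier--Mukai transforms, and to reduce each of the three assertions to a result already established in this section. The set-theoretic content of the bijection is Proposition~\ref{prop:UlrichViaDerivedCategory}, and the comparison of (semi)stability is Proposition~\ref{prop: Stability comparison}; the genuinely new work is to promote these pointwise statements to a statement about moduli \emph{schemes}. Since every Ulrich bundle of rank $r$ has the fixed Chern character computed right after Proposition~\ref{prop:UlrichViaDerivedCategory}, and the Ulrich vanishings are open conditions in flat families, I would first identify $\mathcal M(r)$ with the open subscheme of the Gieseker--Maruyama moduli space of semistable sheaves with that Chern character cut out by the Ulrich conditions. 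The functor $\Phi_\mathcal S^!$ is a Fourier--Mukai transform, hence has an evident relative version: applying it to a $T$-flat family of Ulrich bundles and invoking cohomology-and-base-change, exactly as in the pointwise argument of Proposition~\ref{prop:UlrichViaDerivedCategory} where the transform was shown to be concentrated in degree $0$ and locally free, produces a $T$-flat family of semistable rank-$r$ degree-$2r$ bundles on $C$. This realises $\varphi$ as a morphism, and property~(2) is then precisely Proposition~\ref{prop: Stability comparison}.

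For property~(1), I would use that $\Phi_\mathcal S$ is fully faithful (Theorem~\ref{thm: Bondal-Orlov}) with right adjoint $\Phi_\mathcal S^!$, so that $\Phi_\mathcal S^!\Phi_\mathcal S \simeq \mathrm{id}_{\D(C)}$ and, on the subcategory $\Phi_\mathcal S\D(C)$, $\Phi_\mathcal S\Phi_\mathcal S^! \simeq \mathrm{id}$. The Ulrich conditions force $\mathcal E^*(1) \in \Phi_\mathcal S\D(C)$, which is the observation preceding Proposition~\ref{prop:UlrichViaDerivedCategory}, so $\mathcal E^*(1) \simeq \Phi_\mathcal S(F)$ is recovered from $F = \varphi(\mathcal E)$. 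Hence $F_1 \simeq F_2$ forces $\mathcal E_1^*(1) \simeq \mathcal E_2^*(1)$ and thus $\mathcal E_1 \simeq \mathcal E_2$; as S-equivalence coincides with isomorphism for stable objects, $\varphi$ is injective.

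Property~(3) splits into the description of the image and its openness and nonemptiness. Combining Proposition~\ref{prop:UlrichViaDerivedCategory} with Proposition~\ref{prop: Stability comparison}, together with its contrapositive which sends a stable $F$ back to a stable $\mathcal E$, shows that $\varphi(\mathcal M^{\sf s}(r))$ is exactly the set of stable $F$ satisfying conditions~(1) and~(2). Both conditions are open by semicontinuity of $\Ext^p_C(\mathcal R,-)$ and of $h^1(-\otimes\mathcal S_x)$, and $\mathcal U_C^{\sf s}(r,2r)$ is open in $\mathcal U_C(r,2r)$, so the image is open. For nonemptiness I would intersect the two loci inside the irreducible variety $\mathcal U_C^{\sf s}(r,2r)$: condition~(1) holds on a dense open set by Corollary~\ref{cor:Orthogonality} applied to $G = F^*\otimes\omega_C^{\otimes 2}$, noting that the involution $F \mapsto F^*\otimes\omega_C^{\otimes2}$ preserves rank, degree and stability, while condition~(2) holds on a nonempty open set by Proposition~\ref{prop: generic FM image is locally free}.

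Finally, to see that $\varphi$ is an \emph{isomorphism} onto this open image, I would exhibit the inverse $\psi\colon [F] \mapsto [(\Phi_\mathcal S F)^*(1)]$, again a morphism via the relative $\Phi_\mathcal S$: over the image, condition~(2) makes $\Phi_\mathcal S F$ a vector bundle fiberwise, and in families by base change, and conditions~(1) and~(2) make $(\Phi_\mathcal S F)^*(1)$ Ulrich. The identities $\Phi_\mathcal S^!\Phi_\mathcal S \simeq \mathrm{id}$ and $\Phi_\mathcal S\Phi_\mathcal S^! \simeq \mathrm{id}$ exhibit $\varphi$ and $\psi$ as mutually inverse. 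As an infinitesimal cross-check one may observe that both $\mathcal M^{\sf s}(r)$ and $\mathcal U_C^{\sf s}(r,2r)$ are smooth, the former because $h^2(\mathcal E\otimes\mathcal E^*)=0$ as in the proof of Theorem~\ref{thm:StableUlrichBundlesClassical} and the latter because $\Ext^2$ vanishes on a curve, while full faithfulness gives $\Ext^1_X(\mathcal E^*(1),\mathcal E^*(1)) \simeq \Ext^1_C(F,F)$, so $d\varphi$ is an isomorphism on tangent spaces and a bijective \'etale morphism onto an open subscheme is an isomorphism. \emph{The main obstacle} is precisely this last upgrade from the triangulated level to the level of moduli schemes: the pointwise statements are all in hand, and the real work lies in the relative and flatness bookkeeping needed to verify that $\Phi_\mathcal S^!$ and $\Phi_\mathcal S$ carry families to families of the correct type, namely locally free, flat, and with the prescribed stability, so that they define genuine morphisms of moduli rather than mere bijections on closed points.
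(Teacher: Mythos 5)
Your proposal follows essentially the same route as the paper: it assembles the same inputs (Proposition~\ref{prop:UlrichViaDerivedCategory} for the set-theoretic correspondence, Proposition~\ref{prop: Stability comparison} for stability, Proposition~\ref{prop: generic FM image is locally free} and Corollary~\ref{cor:Orthogonality} for nonemptiness and openness of the image), and in particular your tangent-space identification $\Ext^1_X(\mathcal E,\mathcal E)\simeq\Ext^1_C(F,F)$ via full faithfulness is exactly how the paper upgrades the injection on the stable locus to an isomorphism of schemes. Your additional family-level bookkeeping and the explicit inverse $\psi$ are refinements that the paper leaves implicit.

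The one point you underplay is assertion (1) away from the stable locus. Your argument ``S-equivalence coincides with isomorphism for stable objects'' only yields injectivity on $\mathcal M^{\sf s}(r)$, whereas the theorem asserts set-theoretic injectivity on all of $\mathcal M(r)$, whose points are S-equivalence classes of possibly strictly semistable bundles; for the same reason one must check that $\varphi$ is well defined on S-equivalence classes in the first place. The paper handles both issues by observing that a Jordan--H\"older filtration of $\mathcal E^*(1)$ consists of Ulrich bundles (Proposition~\ref{prop: semistability of Ulrich}) and is carried by $\Phi_\mathcal S^!$ to a Jordan--H\"older filtration of $F=\varphi(\mathcal E)$ with $\op{gr}_j(\varphi(\mathcal E))\simeq\Phi_\mathcal S^!(\op{gr}_j(\mathcal E^*(1)))$, using Proposition~\ref{prop: Stability comparison}; since $\Phi_\mathcal S$ is an equivalence onto its image, matching graded pieces on $C$ then forces matching graded pieces on $X$. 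This is a small, fixable omission rather than a flaw in the method, and your descent of the relative Fourier--Mukai transform through the GIT quotient would in any case require exactly this compatibility with S-equivalence.
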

	\begin{proof}
		First of all, to be well defined, $\varphi$ has to preserve S-equivalence classes. Assume that $\mathcal E_1$ and $\mathcal E_2$ are Ulrich bundles which are S-equivalent, \textit{i.e.} there are Jordan-H\"older filtrations
		\[
			0 =  \mathcal E_i^{(0)} \subset  \mathcal E_i^{(1)} \subset \ldots \subset  \mathcal E_i^{(m)} = \mathcal E_i^*(1)
		\]
		such that $ \mathcal E_1^{(j)} /  \mathcal E_1^{(j-1)} =: \op{gr}_j(\mathcal E_1^*(1)) \simeq \op{gr}_j (\mathcal E_2^*(1)) :=  \mathcal E_2^{(j)} /  \mathcal E_2^{(j-1)}$. For each $j$,
		\[
			0 \to  \mathcal E_i^{(j-1)} \to  \mathcal E_i^{(j)} \to \op{gr}_j (\mathcal E_i^*(1)) \to 0
		\]
		is a short exact sequence of Ulrich bundles by Proposition~\ref{prop: semistability of Ulrich}. The map $\varphi$ preserves both the stability and the strict semistability by Proposition~\ref{prop: Stability comparison}, so it immediately follows that
		\[
			0 = \Phi_\mathcal S^!( \mathcal E_i^{(0)}) \subset \Phi_\mathcal S^!( \mathcal E_i^{(1)}) \subset \ldots \subset \Phi_\mathcal S^!( \mathcal E_i^{(m)}) = \varphi(\mathcal E_i)
		\]
		is a Jordan-H\"older filtration with $\op{gr}_j( \varphi(\mathcal E_i)) \simeq \Phi_\mathcal S^!( \op{gr}_j( \mathcal E_i^*(1)))$. This shows that $\varphi(\mathcal E_1)$ and $\varphi(\mathcal E_2)$ are S-equivalent.
		
		 The statement (1) follows from the fact that $\Phi_\mathcal S \colon \D(C) \to \Phi_\mathcal S(\D(C))$ is an equivalence of categories, and $\mathcal E^*(1) \in \Phi_\mathcal S(\D(C))$ for each Ulrich bundle $\mathcal E$ over $X$.	The statement (2) is already proved in Proposition~\ref{prop: Stability comparison}, so it only remains to prove (3). For any stable Ulrich bundle $[\mathcal E] \in \mathcal M^{\sf s}(r)$, the functor $\Phi_\mathcal S^!$ induces
		\begin{align*}
			T_{[\mathcal E]}\mathcal M^{\sf s}(r) 
			&\simeq \Ext^1_X(\mathcal E,\mathcal E)  \\
			&\simeq \Ext^1_C(\varphi(\mathcal E),\varphi(\mathcal E)) \\
			&\simeq T_{[\varphi(E)]} \mathcal U_C^{\sf s}(r,2r).
		\end{align*}
		Hence together with (1), $\varphi$ is an isomorphism near $[\mathcal E]$. Finally, by Proposition~\ref{prop: generic FM image is locally free} and Corollary~\ref{cor:Orthogonality}, $\varphi(\mathcal M^{\sf s}(r))$ is open and nonempty.
	\end{proof}
	\begin{rmk}
		It is not true in general that $\varphi(\mathcal M^{\sf s}(r)) = \mathcal U_C^{\sf s}(r,2r)$. For example, choose a point $P \in C$ and consider a stable bundle $ F := \mathcal R^* \otimes \mathcal O_C(-P) \otimes \omega_C^{\otimes 2}$ of rank $4$ and degree $8$. Then $ F^* \otimes \omega_C^{\otimes 2} = \mathcal R \otimes \mathcal O_C(P)$, hence we see that
		\[
			\Hom_C(\mathcal R,\,  F^* \otimes \omega_C^{\otimes 2}) \neq 0.	
		\]
		This shows that $\varphi( \mathcal M^{\sf s}(4))$ is a proper subset of $\mathcal U_C^{\sf s}(4,8)$.
	\end{rmk}
	The relation between jumping lines and instanton bundles has been studied in \cite{Kuznetsov:Instanton}. For stable Ulrich bundles, we show that a generic line is not jumping. Recall that $\ell \subset X$ is a \emph{jumping line} for $\mathcal E$ if the direct sum decomposition of $\mathcal E\big\vert_\ell$ contains at least two non-isomorphic direct summands.
	\begin{prop}
		Let $\mathcal E$ be a stable Ulrich bundle of rank $r$ over $X$. For a generic line $\ell \subset X$, $\mathcal E\big\vert_\ell \simeq \mathcal O_X(1)^{\oplus r}$.
	\end{prop}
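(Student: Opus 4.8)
The plan is to convert the splitting-type statement into a cohomological vanishing along lines, to observe that this vanishing is an open condition on the Fano surface of lines $F(X)\cong J(C)$, and finally to exhibit (or count) enough non-jumping lines.

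First I would fix a line $\ell\subset X$, which is a $\P^1$ with $\mathcal O_X(1)|_\ell=\mathcal O_{\P^1}(1)$, and apply Grothendieck's splitting theorem to write $\mathcal E|_\ell\cong\bigoplus_{i=1}^r\mathcal O_{\P^1}(a_i)$. Two numerical constraints pin down the balanced case: since $c_1(\mathcal E)=r$ and $H_X\cdot\ell=1$ one has $\sum_i a_i=r$, while global generation of the Ulrich bundle $\mathcal E$ (hence of $\mathcal E|_\ell$) forces $a_i\ge 0$ for all $i$. Under these constraints, $\mathcal E|_\ell\cong\mathcal O_X(1)^{\oplus r}$ if and only if no $a_i\ge 2$, i.e. if and only if
\[
H^0(\ell,\mathcal E(-2)|_\ell)=\textstyle\sum_i\max(a_i-1,0)=0 ,
\]
equivalently $\mathcal E(-2)|_\ell\cong\mathcal O_{\P^1}(-1)^{\oplus r}$, the unique balanced bundle of degree $-r$. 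Note $\chi(\mathcal E(-2)|_\ell)=-r+r=0$, so $H^0(\ell,\mathcal E(-2)|_\ell)\ne 0$ is also equivalent to $H^1(\ell,\mathcal E(-2)|_\ell)\ne 0$; a line is jumping precisely when these groups are nonzero.

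Next I would use the strong acyclicity of Ulrich bundles. By Proposition~\ref{prop: Ulrich Equiv conditions}-\ref{item: Ulrich as Acyclic condition} we have $H^\bullet(X,\mathcal E(-2))=0$, so the long exact sequence obtained by tensoring $0\to\mathcal I_\ell\to\mathcal O_X\to\mathcal O_\ell\to 0$ with $\mathcal E(-2)$ yields a natural isomorphism
\[
H^0(\ell,\mathcal E(-2)|_\ell)\;\simeq\;H^1(X,\mathcal E(-2)\otimes\mathcal I_\ell).
\]
As $\ell$ runs over the universal family $p\colon\Sigma\to F(X)$, $q\colon\Sigma\to X$ (with $\Sigma$ the incidence $\P^1$-bundle), the sheaves $\mathcal E(-2)|_\ell$ are the fibers of $q^*\mathcal E(-2)$; applying the semicontinuity theorem to $Rp_*\,q^*\mathcal E(-2)$ shows that the jumping locus $J=\{\ell : H^0(\ell,\mathcal E(-2)|_\ell)\ne 0\}$ is closed in $F(X)$. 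It therefore suffices to prove that $J\ne F(X)$, i.e. that at least one line is non-jumping, and then the generic line in the irreducible surface $F(X)$ is balanced.

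The remaining step, which I expect to be the main obstacle, is exactly the properness $J\subsetneq F(X)$. Here I would transport the condition through the Bondal--Orlov equivalence of Theorem~\ref{thm: Bondal-Orlov}: writing $\mathcal E^*(1)=\Phi_\mathcal S(F)$ with $F\in\mathcal U_C^{\sf s}(r,2r)$, the lines of $X$ are parametrized by $J(C)$ and the splitting type of $\mathcal E$ along $\ell$ is controlled by the cohomology on $C$ of $F$ twisted by the point of $J(C)$ corresponding to $\ell$, following Kuznetsov's analysis of jumping lines for instanton bundles \cite[\S5]{Kuznetsov:Instanton}. Because $\chi(\mathcal E(-2)|_\ell)=0$ forces $h^0=h^1$, the jumping condition is a square-degeneracy condition, so the expected codimension of $J$ in the surface $F(X)$ is one. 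To make this rigorous I would bound the dimension of each stratum of $J$ by the same Quot-scheme and extension estimates used in Propositions~\ref{prop: generic FM image is locally free} and \ref{prop: Generic orthogonality r=3}: a jumping line produces a sub-line-bundle $\mathcal O_\ell(2)\hookrightarrow\mathcal E|_\ell$, hence a destabilizing-type quotient on the $C$-side, and the locus of $\ell\in J(C)$ at which the corresponding cohomology jumps can be shown to have dimension at most one. The crux is precisely to rule out that \emph{every} line forces such a summand of degree $\ge 2$; for the base ranks $r=2,3$ this can alternatively be checked directly, since for a line $\ell$ disjoint from the associated ACM curve the restriction of $\mathcal E$ is a (generically nonsplit) extension that is automatically balanced, and the higher-rank cases then follow from semicontinuity together with the additivity of the orthogonality conditions in Corollary~\ref{cor:Orthogonality}.
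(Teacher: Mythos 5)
Your reduction of the statement to the vanishing $H^0(\ell,\mathcal E(-2)|_\ell)=0$ (via Grothendieck splitting, $\sum a_i=r$, and global generation), the identification of this group with $H^1(X,\mathcal E(-2)\otimes\mathcal I_\ell)$ through the Ulrich acyclicity, and the observation that the jumping locus is closed in $F(X)$ all match the paper's framework and are correct. The gap is in the one step you yourself flag as the main obstacle: you never actually prove that the jumping locus is a proper subset of $F(X)$. Under the Bondal--Orlov correspondence the paper identifies $H^p(\mathcal E(-2)\otimes\mathcal O_\ell)$ with $H^p(C,\,L\otimes G\otimes\xi^*)$, where $G=F^*\otimes\omega_C^{\otimes 2}$ and $L\in\Pic^0(C)$ is the line bundle corresponding to $\ell$ under $F(X)\simeq J(C)$. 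Thus the jumping locus is exactly the theta locus $\Theta_{G\otimes\xi^*}$ of a stable bundle of slope $g-1=1$, and Raynaud's examples---the bundle $\mathcal R$ itself, for which $h^0(\mathcal R\otimes L)=1$ for \emph{every} $L\in\Pic^0(C)$, as recorded in the paper's closing remarks---show that such a locus can be all of $\Pic^0(C)$ even for a stable bundle. So neither stability, nor an ``expected codimension one'' heuristic, nor a Quot-scheme dimension count for the \emph{fixed} bundle $\mathcal E$ (the counts in Propositions~\ref{prop: generic FM image is locally free} and \ref{prop: Generic orthogonality r=3} vary the bundle, not the line) can rule this out; the phenomenon you must exclude genuinely occurs.

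What the paper uses instead is Hein's theorem on generalized theta divisors \cite{Hein:Raynaud}: a stable bundle of slope $g-1$ admits a theta divisor, i.e.\ $h^0(L\otimes{-})=0$ for general $L\in\Pic^0(C)$, as soon as it receives no nonzero map from the Raynaud bundle $\mathcal R$. The Ulrich condition supplies precisely this input: by Proposition~\ref{prop:UlrichViaDerivedCategory} one has $\Hom_C(\mathcal R,G)=0$, hence $G\otimes\xi^*\not\simeq\mathcal R$, and since both are stable of the same slope this forces $\Hom_C(\mathcal R,\,G\otimes\xi^*)=0$; Hein's result then gives the generic vanishing, and the chain of dualities transports it back to $h^p(\mathcal E(-2)\otimes\mathcal O_\ell)=0$. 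Your proposal never connects the jumping-line condition to the orthogonality against $\mathcal R$, which is the heart of the proof. Your fallback arguments also fall short: the claim that for $r=2,3$ the restriction to a line disjoint from the associated ACM curve is ``generically nonsplit'' is exactly what needs proof (a split restriction $\mathcal O_\ell\oplus\mathcal O_\ell(2)$ is a priori possible for every such line), and deducing the higher-rank cases by semicontinuity from boundary direct sums would only yield the statement for a \emph{generic} stable Ulrich bundle, whereas the proposition asserts it for every one.
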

	\begin{proof}
		We may assume that $\xi = \mathcal O_C(P)$ for a point $P \in C$. Indeed, if we choose a suitable $ L \in \Pic^0 (C)$ and make a replacement $\mathcal S' := \mathcal S \otimes p_C^*  L$, then all the arguments in this section are still valid for the new Fourier-Mukai transform $\Phi_{\mathcal S'} \colon \D(C) \to \D(X)$ and its right adjoint $\Phi_{\mathcal S'}^!$. In particular, the Raynaud bundle $\mathcal R'$ obtained from $\Phi_{\mathcal S'}^! \mathcal O_X(-2)$ as in Lemma~\ref{lem: Projection image and Raynaud bundle} satisfies $\mathcal R' = \mathcal R \otimes  L$.
		
		Let $ F:= \Phi_\mathcal S^!( \mathcal E^*(1))$ and $ G :=  F^* \otimes \omega_C^{\otimes 2}$. We have $ G \otimes \xi^* \neq \mathcal R$; otherwise
		\[
			0 = \Hom_C(\mathcal R,\,  G) = \Hom_C(\mathcal R,\, \mathcal R \otimes \mathcal O_C(P)) \neq 0
		\]
		gives a contradiction. Since $ G$ is stable and $ G \otimes \xi^* \neq \mathcal R$, we have $\Hom_C(\mathcal R,\,  G \otimes \xi^*)=0$. By \cite[Lemma~2.4 and Theorem~2.5]{Hein:Raynaud}, $H^0(C,\, L\otimes  G \otimes \xi^* ) =0$ for a general $ L \in \Pic^0 (C)$. On the other hand,
		\begin{align*}
			H^p(C,\,  L \otimes  G \otimes \xi^*) 
			&= \Ext^{1-p}_C(  G,\,  L^* \otimes \xi \otimes \omega_C)^* \\
			&= \Ext^{1-p}_C(  L \otimes \xi^* \otimes \omega_C , \,  F )^* \\
			&= \Ext^{1-p}_X( \Phi_\mathcal S(  L \otimes \xi^* \otimes \omega_C) ,\, \mathcal E^*(1))^*.
		\end{align*}
		We have $\Phi_\mathcal S(  L \otimes \xi^* \otimes \omega_C) = \mathcal I_\ell(1)[-1]$ for a line $\ell \subset X$ and its ideal sheaf $\mathcal I_\ell$\,(\textit{cf.} \cite[Lemma~5.5]{Kuznetsov:Instanton}). This establishes a bijection between $\Pic^1 (C)$ and the Fano variety $F(X)$ of lines in $X$. Thus, 
		\[
			H^p(C,\,  L \otimes  G \otimes \xi^*) \simeq \Ext_X^{2-p}( \mathcal I_\ell,\, \mathcal E^*)^* \simeq H^{p+1}( \mathcal E \otimes \mathcal I_\ell (-2) ).
		\]
		In the short exact sequence $0 \to \mathcal E \otimes \mathcal I_\ell(-2) \to \mathcal E(-2) \to \mathcal E(-2) \otimes \mathcal O_\ell \to 0$, we easily find that $H^{p+1}( \mathcal E \otimes \mathcal I_\ell(-2)) \simeq H^p( \mathcal E(-2) \otimes \mathcal O_\ell)$. In particular, $h^p(\mathcal E(-2) \otimes \mathcal O_\ell)=0$ which implies $\mathcal E \big\vert_\ell \simeq \mathcal O_X(1)^{\oplus r}$ for a general $\ell \in F(X)$.
	\end{proof}

We finish this paper by some important remarks. 

\begin{rmk}[Arrondo--Costa revisited]\ 
\begin{enumerate}
	\item Arrondo--Costa's classification\,(Theorem~\ref{thm: Arrondo-Costa ACM rk 2}) also can be interpreted via derived categories of coherent sheaves on $X$. The moduli space of rank $2$ ACM bundles of line type is isomorphic to the abelian surface $J(C)$, and the interpretation in terms of categorical language is explained in \cite[Lemma~5.5]{Kuznetsov:Instanton}. The moduli space of rank $2$ ACM bundles of conic type is isomorphic to $C$ and this can be explained by the result of \cite{BondalOrlov:SODforAlgVar} because the image of a conic type ACM bundle along the projection functor is a skyscraper sheaf. Finally, rank $2$ ACM bundles of elliptic curve type are Ulrich, hence $\mathcal E \mapsto \Phi_\mathcal S^!( \mathcal E^*(1))$ shows that the moduli space of ACM bundles of elliptic curve type is isomorphic to an open subset of $\mathcal U_C(2,4)$.
	\item  We observed above that the rank 3 vector bundle $\mathcal E$ constructed in \cite[Example 4.4]{ArrondoCosta2000} is not Ulrich. Indeed, two global sections of $\omega_D(-1)$ has a nontrivial linear relation, that is,
\[
H^1(\mathcal E^* (1)) \simeq \ker [ H^0(\omega_D(-1)) \otimes H^0(\mathbb P^5, \mathcal O_{\mathbb P^5}(1)) \to H^0(\omega_D)] \simeq \C^1.
\]
Hence $h^2(\mathcal E(-3)) = h^3 (\mathcal E(-3)) = 1$. Nevertheless, it is still a very interesting vector bundle as in the following sense. Since $\mathcal E(-1)$ and $\mathcal E(-2)$ have no cohomology, we see that $\mathcal E^* (1)$ is a semistable vector bundle of rank 3 contained in $\Phi_\mathcal S \D(C)$. Indeed, the nonzero section of $H^0 (\mathcal E^*(1)) \simeq H^3 (\mathcal E(-3))^*$ induces a short exact sequence
\[
0 \to \bar{ \mathcal E} \to \mathcal E(-1) \to \mathcal O_X \to 0,
\]
where $\bar{\mathcal E}$ is a rank 2 vector bundle so called an ``instanton bundle'' of charge 3 (see \cite{Faenzi:Instanton} and \cite[Definition 1.1 and Theorem 3.10]{Kuznetsov:Instanton}). Note that rank 2 Ulrich bundles are instanton bundles of charge 2, which are minimal. Arrondo--Costa construction shows the existence of a non-minimal instanton bundle.
\end{enumerate}
\end{rmk}

\begin{rmk}
The second Raynaud bundle $\mathcal R$ has an interesting property. Note that a (semi-)stable vector bundle $ F$ of rank r and slope $g-1(= 1)$ on $C$ defines the theta locus
\[
\Theta_{ F} := \{  L \in \Pic^0 (C) \ | \ h^0 (C,  F \otimes  L) \neq 0 \},
\]
which is a natural generalization of the theta divisor. The locus is either a divisor linearly equivalent to $r \Theta$ where $\Theta \subset \Pic^0(C)$ is the usual theta divisor, or the whole Picard group $ \Pic^0(C)$. Indeed, the theta map
\[
\theta : \mathcal {SU}_C (r, \det F) \dashrightarrow \lvert r \Theta \rvert
\]
gives a rational map, which is a morphism when $r \le 3$. However, when $r=4$, $\mathcal R$ does not have a theta divisor since $h^0(\mathcal R \otimes L ) = 1$ for every $L \in \Pic^0(C)$ as treated above (see also \cite[Lemma 5.9]{Kuznetsov:Instanton}). We refer interested readers to \cite{Hein:Raynaud,Pauly:Raynaud, Popa:GeneralizedTheta} for more details on generalized theta divisors and $\mathcal R$.

The strange duality provides a following geometric interpretation in terms of generalized theta divisors. Denote $\mathcal L$ by the ample generator of $\Pic \mathcal {SU}_C (4, \det \mathcal R)$, we see that $\mathcal R$ is a base point of $|\mathcal L^k|$ if and only if
\[
	H^0(C, \mathcal R \otimes  G) \neq 0, \text{ for all }  G \in \mathcal U_C(k,0). 
\] 
By Serre duality, the above condition is equivalent to
	\[
		\Hom_C(\mathcal R,  G^* \otimes \omega_C) = \Ext^1_C (\mathcal R,  G^* \otimes \omega_C) \neq 0.
	\]
	Note that $ G^* \otimes \omega_C$ is a vector bundle of rank $k$ and degree $2k$.
	
Corollary \ref{cor:Orthogonality} actually implies that $\mathcal R$ is not a base point of $|\mathcal L^k|$ for $k \ge 2$. Since Proposition \ref{prop: Generic orthogonality r=3} holds not only for $\mathcal R$ but for any stable rank 4 vector bundle of degree 4, we conclude that
\begin{enumerate}
\item $\mathcal R \not \in Bs |\mathcal L^2|$, {\textit{i.e.}}, $Bs | \mathcal L^2|$ is a proper subset of $Bs |\mathcal L| = \{ \text{the set of 16 Raynaud type bundles on } C \}$ which correspond to 16 theta characteristics of $C$;
\item The linear system $|\mathcal L^k|$ is base-point-free for $k = 3$.
\end{enumerate}
Since $|\mathcal L^k|$ is base-point-free for $k \ge 4$ \cite[Theorem 8.1]{PopaRoth}, the above statement answers to the question by Popa and Roth for $g=2$ and $r=4$ (cf. \cite[Section 8]{PopaRoth}).

 Even though our argument do not assure that a generic vector bundle $ F \in \mathcal U_C(2,4)$ is orthogonal to all the 16 Raynaud type bundles, however, it sounds very promising that $|\mathcal L^2|$ is also base-point-free.
\end{rmk}

\begin{rmk}
The strategy in Proposition \ref{prop:UlrichViaDerivedCategory} is also useful to classify Ulrich bundles for smooth complete intersection varieties of two even dimensional quadrics of higher dimensions. In higher dimensional cases, we also observe that every Ulrich bundle is a image of Fourier-Mukai transform of a semistable vector bundle on the associated hyperelliptic curve from Bondal-Orlov's semiorthogonal decomposition. Moreover, the moduli space of stable Ulrich bundles is a smooth Zariski open subset of the moduli space of stable vector bundles on the associated hyperelliptic curve. However, showing the existence becomes more complicated for higher dimensional cases. For instance, there is no Ulrich bundle of rank 2 on such an $n$-dimensional del Pezzo variety of degree 4 when $n \ge 5$ \cite[Theorem 6.3]{Casnati:rank2ACM}.  By the way, we know the existence of Ulrich bundles of certain rank in these cases from \cite{BuchweitzEisenbudHerzog}. Therefore, it is interesting to compute all the possible ranks of Ulrich bundles on the higher dimensional smooth complete intersection varieties of two even dimensional quadrics. For example, \cite[Theorem 8.1]{PopaRoth} enables us to make a wild expectation that an Ulrich bundle of rank $2^{2g-2}$ might exist.
\end{rmk}

\begin{acknowledgement}
The authors thank Fabrizio Catanese and Universit{\"a}t Bayreuth for kind hospitality during their visit. Yonghwa Cho would like to express his gratitude to JongHae Keum and Korea Institute for Advanced Study for hospitality when he was visiting there. Yeongrak Kim thanks George Harry Hitching, Mihnea Popa, and Frank-Olaf Schreyer for helpful discussion and suggestions. Kyoung-Seog Lee is grateful to Mudumbai Seshachalu Narasimhan for many invaluable teachings, encouragements and kind hospitality. He thanks Alexander Kuznetsov, Carlo Madonna, and Paolo Stellari for kind explanations and motivating discussions. Part of this work was done while he was a research fellow of Korea Institute for Advanced Study and was visiting Indian Institute of Science. He thanks Korea Institute for Advanced Study and Indian Institute of Science for wonderful working conditions and kind hospitality. He thanks Gadadhar Misra for kind hospitality during his stay in Indian Institute of Science.

	Yonghwa Cho was partially supported by Basic Science Research Program through the NRF of Korea funded by the Ministry of Education (2016930170). Yeongrak Kim was supported by Basic Science Research Program through the National Research Foundation of Korea funded by the Ministry of Education (NRF-2016R1A6A3A03008745). Kyoung-Seog Lee was supported by IBS-R003-Y1.
\end{acknowledgement}


\bibliography{CKL_Bib}
\end{document}